\title{The codimension-one cohomology of $\SL_n \Z$}
\author{Thomas Church and Andrew Putman\thanks{The first author was supported by NSF grants DMS-1103807 and DMS-1350138 and the Alfred P.~Sloan Foundation. The second author was supported by NSF grant DMS-1255350 and the Alfred P.\ Sloan Foundation.}}
\theoremstyle{plain}
\newtheorem{theorem}{Theorem}[section]
\newtheorem{maintheorem}{Theorem}
\newtheorem{proposition}[theorem]{Proposition}
\newtheorem{lemma}[theorem]{Lemma}
\newtheorem{corollary}[theorem]{Corollary}
\newtheorem{claims}{Claim}
\newtheorem{step}{Step}
\theoremstyle{definition}
\newtheorem{remark}[theorem]{Remark}
\newtheorem{definition}[theorem]{Definition}
\newtheorem{example}[theorem]{Example}
\newlist{compactitem}{itemize}{3}
\setlist[compactitem]{nosep}
\setlist[compactitem,1]{label=\textbullet}
\setlist[compactitem,2]{label=--}
\setlist[compactitem,3]{label=\ensuremath{\ast}}
\newlist{compactdesc}{description}{3}
\setlist[compactdesc]{nosep}
\newlist{compactenum}{enumerate}{3}
\setlist[compactenum]{nosep}
\setlist[compactenum,1]{label=\arabic*.}
\setlist[compactenum,2]{label=(\alph*)}
\setlist[compactenum,3]{label=\roman*.}
\newenvironment{theorem-prime}[1]{\innerthm}{\endinnerthm}
\DeclareMathOperator{\coker}{coker}
\DeclareMathOperator{\GL}{GL}
\DeclareMathOperator{\SL}{SL}
\newcommand\Z{\ensuremath{\mathbb{Z}}}
\newcommand\Q{\ensuremath{\mathbb{Q}}}
\newcommand\N{\ensuremath{\mathbb{N}}}
\newcommand\F{\ensuremath{\mathbb{F}}}
\DeclareMathOperator{\HH}{H}
\newcommand\RH{\ensuremath{\widetilde{\HH}}}
\DeclareMathOperator*{\Max}{max}
\DeclareMathOperator{\Aut}{Aut}
\DeclareMathOperator{\Ind}{Ind}
\DeclareMathOperator{\Span}{span}
\newcommand\Set[2]{\ensuremath{\{\text{#1 $|$ #2}\}}}
\newcommand\Figure[4]{
\begin{figure}[t]
\centering
\centerline{\psfig{file=#2,scale=#4}}
\caption{#3}
\label{#1}
\end{figure}}
\newcommand{\para}[1]{\bigskip\noindent\textbf{#1.}}
\renewcommand{\phi}{\varphi}
\newcommand\PartialBases{\ensuremath{\mathcal{B}}}
\newcommand{\B}{\PartialBases}
\newcommand\PartialBasesA{\ensuremath{\mathcal{BA}}}
\newcommand{\BA}{\PartialBasesA}
\DeclareMathOperator{\Rank}{rank}
\newcommand{\rank}{r}
\DeclareMathOperator{\ActualRank}{rank}
\DeclareMathOperator{\vcd}{vcd}
\newcommand\Tits{\ensuremath{\mathcal{T}}}
\newcommand\TitsP{\ensuremath{\mathbb{T}}} 
\DeclareMathOperator{\St}{St}
\DeclareMathOperator{\ModSym}{\mathcal{I}}
\newcommand\I{\ModSym}
\DeclareMathOperator{\Link}{Link}
\DeclareMathOperator{\Star}{Star}
\newcommand\Poset{\ensuremath{\mathcal{P}}}
\DeclareMathOperator{\Height}{ht}
\newcommand\abs[1]{\left\lvert#1\right\rvert}
\renewcommand\l[1]{#1^\pm}
\newcommand\tensor{\otimes}
\newcommand\abold{\mathbf{a}}
\newcommand\sign{\varepsilon}
\newcommand{\x}[1]{\langle\hspace{-2pt}\langle #1 \rangle\hspace{-2pt}\rangle} 
\newcommand\Tri{\mathcal{C}}
\newcommand\SLvcd{\binom{n}{2}}
\newcommand\onto{\twoheadrightarrow}
\newcommand\coloneq{\mathrel{\mathop:}\mkern-1.2mu=}
\newcommand{\arXiv}[1]{\href{http://arxiv.org/abs/#1}{arXiv:#1}}
\newcommand{\myemail}[1]{\href{mailto:#1}{\nolinkurl{#1}}}
\newcommand\iso{\cong}
\newcommand\PLink{\ensuremath{\widehat{\Link}}}
\newcommand\pihat{\widehat{\pi}}
\begin{document}

\maketitle

\begin{abstract}
We prove that $\HH^{\binom{n}{2}-1}(\SL_n \Z;\Q) = 0$, where $\binom{n}{2}$ is the cohomological dimension
of $\SL_n \Z$, and similarly for $\GL_n \Z$.  We also prove analogous vanishing theorems 
for cohomology with coefficients in a rational representation
of the algebraic group $\GL_n$.  These theorems are derived from a presentation of the Steinberg
module for $\SL_n \Z$ whose generators are integral apartment classes, generalizing Manin's presentation for the Steinberg module of $\SL_2 \Z$. This presentation was originally constructed by Bykovskii.  We give a new
topological proof of it.
\end{abstract}

\section{Introduction}
\label{section:introduction}

The cohomology of $\SL_n \Z$ plays a fundamental role in many areas of mathematics.  The
Borel Stability Theorem \cite{BorelStability} determines $\HH^k(\SL_n \Z;\Q)$ when
$k$ is sufficiently small (conjecturally, for $k<n-1$). 
However, little is known outside this stable range.  
Recall that if $\Gamma$ is a virtually torsion-free group, the virtual cohomological
dimension of $\Gamma$ is
\[\vcd(\Gamma)\coloneq \max \Set{$k$}{$\HH^k(\Gamma;V\tensor \Q) \neq 0$ for some $\Gamma$-module $V$}.\]
Borel--Serre \cite{BorelSerreCorners} proved that
\[\vcd(\SL_n \Z)=\vcd(\GL_n\Z)= \SLvcd.\]
The cohomology of $\SL_n \Z$ in degrees near $\SLvcd$ is thus the ``most unstable'' cohomology.
In 1976, Lee--Szczarba~\cite[Theorem~1.3]{LeeSzczarbaCongruence} proved that
$\HH^{\SLvcd}(\SL_n\Z;\Q)=0$.    This vanishing was recently extended to $\HH^{\SLvcd}(\SL_n\Z;V_\lambda)=0$ for rational
representations $V_{\lambda}$ of the algebraic group $\GL_n$ by Church--Farb--Putman \cite{CFPSteinberg}.

Our main theorem concerns the cohomology in codimension $1$.  For $\lambda = (\lambda_1,\ldots,\lambda_n) \in \Z^n$ with
$\lambda_1 \geq \cdots \geq \lambda_n$, 
let $V_{\lambda}$ be the rational representation of $\GL_n \Q$
with highest weight $\lambda$.  Define $\|\lambda\| = \sum_{i=1}^{n} (\lambda_i - \lambda_n)$. 

\enlargethispage{\baselineskip}
\begin{maintheorem}[{\bf Codimension-one Vanishing Theorem}]
\label{maintheorem:vanishing}
For any rational representation~$V_\lambda$ of $\GL_n \Q$, we have
\[\HH^{\SLvcd-1}(\SL_n \Z;V_\lambda)=\HH^{\SLvcd-1}(\GL_n \Z;V_\lambda) = 0\quad \quad \text{for all }n\geq 3+\|\lambda\|.\hskip-27pt\]
In particular, 
\[\HH^{\SLvcd-1}(\SL_n \Z;\Q) = \HH^{\SLvcd-1}(\GL_n \Z;\Q) =0\quad \quad \text{for all }n\geq 3.\]
\end{maintheorem}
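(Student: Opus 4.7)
The plan is to combine Borel--Serre duality with the Bykovskii presentation of the Steinberg module.

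By Borel--Serre duality, the group $\SL_n\Z$ is a virtual duality group of dimension $\SLvcd$ with dualizing module the Steinberg module $\St_n$, so that
\[\HH^{\SLvcd-1}(\SL_n\Z; V_\lambda) \cong \HH_1(\SL_n\Z; \St_n \otimes V_\lambda),\]
and similarly for $\GL_n\Z$ (up to a determinantal twist which does not affect vanishing). Thus the theorem is equivalent to showing $\HH_1(\SL_n\Z; \St_n \otimes V_\lambda) = 0$.

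The central input is the Bykovskii presentation advertised in the abstract (whose topological reproof is the main work of the paper): a short exact sequence of $\SL_n\Z$-modules
\[0 \to R \to A \to \St_n \to 0,\]
where $A$ is the free abelian group on ordered integral bases $[v_1,\ldots,v_n]$ of $\Z^n$ (the ``integral apartment classes'') and $R$ is generated by three-term Manin-type relations. The key structural feature is that both $A$ and $R$ are permutation modules on $\SL_n\Z$-sets with only \emph{finitely many orbits} and \emph{finite point stabilizers} (essentially signed permutation groups, with small augmentations for $R$).

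Applying $\HH_*(\SL_n\Z; -\otimes V_\lambda)$ to this presentation gives a long exact sequence containing the fragment
\[\HH_1(\SL_n\Z; A \otimes V_\lambda) \to \HH_1(\SL_n\Z; \St_n \otimes V_\lambda) \to \HH_0(\SL_n\Z; R \otimes V_\lambda) \to \HH_0(\SL_n\Z; A \otimes V_\lambda).\]
It therefore suffices to show: (a) $\HH_1(\SL_n\Z; A \otimes V_\lambda) = 0$, and (b) the rightmost map is injective. Claim (a) follows immediately from Shapiro's lemma and the vanishing of higher rational cohomology of finite groups, so the work is concentrated in (b).

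The main obstacle will be (b). Shapiro's lemma identifies each of those $\HH_0$ groups with a finite direct sum of coinvariant spaces $(V_\lambda)_H$ over finite stabilizers $H$, and the map is induced by the Bykovskii three-term relation. Verifying injectivity therefore becomes a concrete representation-theoretic question about how this relation acts between explicitly computable coinvariant spaces. The hypothesis $n \geq 3 + \|\lambda\|$ should enter precisely here: it gives enough ``extra'' standard basis vectors so that branching from $\GL_n$ to subgroups fixing two or three of them behaves in a stable range, and the coinvariants can be computed uniformly in $n$. In the trivial-coefficient case $V_\lambda = \Q$ one has $\|\lambda\| = 0$, all coinvariants are canonically $\Q$, and (b) should collapse to a small and explicit linear-algebra check consistent with the sharp bound $n \geq 3$.
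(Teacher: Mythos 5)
Your reduction is the right one and matches the paper's: Borel--Serre duality converts the statement to $\HH_1(\SL_n\Z;\St_n\otimes V_\lambda)=0$, and the Bykovskii presentation is the input that makes this computable. Step (a) is also fine. But step (b), which you correctly identify as carrying all the content, has a genuine gap in both its setup and its execution. First, the relation module $R=\ker(A\to\St_n)$ is \emph{not} a permutation module with finite stabilizers: it is only \emph{generated} by the $\SL_n\Z$-orbits of the relators R1--R3. There are syzygies among these relators (indeed, classifying them would amount to finding the next term of a resolution of $\St_n$), so $R$ is a proper quotient of the induced module on relators, and Shapiro's lemma does not identify $\HH_0(\SL_n\Z;R\otimes V_\lambda)$ with a sum of coinvariants over finite groups. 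Second, even granting some description of $\HH_0(\SL_n\Z;R\otimes V_\lambda)$, the injectivity of the connecting map is exactly equivalent to the theorem, and your proposal only asserts that it ``should'' follow from $n\geq 3+\|\lambda\|$ without an argument; this is where the proof actually lives.

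The paper avoids both problems by not using a short exact sequence at all. It takes the exact (but not left-exact) two-term complex $\I_1^{\Q}\to\I_0^{\Q}\to\St_n^{\Q}\to 0$ coming from the relative chains of $(\BA_n,\BA'_n)$: here $\I_0^{\Q}$ and $\I_1^{\Q}$ are spanned by frames and augmented frames, whose stabilizers really are finite (signed permutation groups, resp.\ $D_6\times S_{n-2}^{\pm}$), so both modules are flat (Lemma 3.2). Extending to a flat resolution, $\HH_1(\SL_n\Z;\St_n^{\Q}\otimes V_\lambda)$ becomes a subquotient of $\I_1^{\Q}\otimes_{\SL_n\Z}V_\lambda$, and the punchline is that this coinvariant space is \emph{identically zero}: after embedding $V_\lambda$ in $V^{\otimes k}$ with $k=\|\lambda\|$, the hypothesis $n\geq 3+k$ supplies, for each augmented frame $\sigma$ and each basis tensor $w$, an element of the stabilizer of $\sigma$ that reverses the orientation of $\sigma$ while fixing $w$, forcing $[\sigma]\otimes w=-[\sigma]\otimes w=0$. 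So no injectivity statement and no computation of relation coinvariants is ever needed. If you want to salvage your plan, the fix is to replace ``show the map out of $\HH_0(\SL_n\Z;R\otimes V_\lambda)$ is injective'' with ``show the coinvariants of the induced module on augmented frames vanish,'' which is a genuinely finite-group computation and is where the bound $n\geq 3+\|\lambda\|$ enters.
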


\begin{remark}
Theorem~\ref{maintheorem:vanishing} proves the $k=1$ case of a conjecture of Church--Farb--Putman \cite{ChurchFarbPutmanConjecture}
which asserts that for all $k \geq 0$, we have $\HH^{\SLvcd-k}(\SL_n \Z;\Q) = 0$ for $n>k+1$. 
\end{remark}

\paragraph{Steinberg module.}
To compute $\HH^{\SLvcd-k}(\SL_n\Z;\Q)$ for small $k$, the crucial object to understand is the 
\emph{Steinberg module}, which we now discuss.
The Tits building $\Tits_n$ for $\GL_n\Q$ is the simplicial complex
whose $p$-simplices are flags of subspaces
\[0 \subsetneq V_0 \subsetneq \cdots \subsetneq V_p \subsetneq \Q^n.\]   
By the Solomon-Tits theorem, $\Tits_n$ is homotopy equivalent to an infinite wedge of $(n-2)$-dimensional spheres.  
The group $\GL_n\Q$ naturally acts on $\Tits_n$ by simplicial automorphisms, and the \emph{Steinberg module} for $\GL_n\Q$ is the $\GL_n\Q$-module 
\[\St_n\coloneq \RH_{n-2}(\Tits_n;\Z).\] 

\paragraph{Borel--Serre duality.}
While $\SL_n\Z$ does not satisfy Poincar\'{e} duality, Borel--Serre~\cite{BorelSerreCorners} proved that it does satisfy 
\emph{virtual Bieri--Eckmann duality} with rational dualizing module $\St_n$.  This means that for any 
$\Q\SL_n\Z$-module $V$ and any $k\geq 0$, we have
\begin{equation}
\label{eq:introBEduality}
\HH^{\SLvcd-k}\big(\SL_n \Z;\,V\big) \iso \HH_k\big(\SL_n\Z;\, \St_n\otimes V\big).
\end{equation}
A similar result holds for $\GL_n \Z$ (with the same dualizing module $\St_n$).

\paragraph{Presentation of $\St_n$.} 
To prove Theorem~\ref{maintheorem:vanishing}, we compute the right-hand side of \eqref{eq:introBEduality} using
a presentation of the $\GL_n \Z$-module $\St_n$.  The following theorem was originally proved
by Bykovskii \cite{Bykovskii} and generalizes the presentation of $\St_2$ given by
Manin in 1972 \cite[Theorem 1.9]{Manin}.  The second purpose of this paper is to offer a new proof of it.

\begin{maintheorem}[{\bf Presentation of \boldmath$\St_n$}]
\label{maintheorem:bykovskii}
For $n \geq 2$, the Steinberg module $\St_n$ is the abelian group with generators 
$[v_1,\ldots,v_n]$, one for each ordered basis $\{v_1,\ldots,v_n\}$
of $\Z^n$, subject to the following three families of relations.
\begin{compactenum}[label={\normalfont R\arabic*.},ref={R\arabic*}]
\item \label{rel:manin} \ \ $[v_1,v_2,v_3,\ldots,v_n]=[v_1,v_1+v_2,v_3,\ldots,v_n]+[v_1+v_2,v_2,v_3,\ldots,v_n]$.
\item \label{rel:sign} \ \ $[\pm v_1, \pm v_2,\ldots,\pm v_n]=[v_1,v_2,\ldots,v_n]$ for any choices of signs.
\item \label{rel:perm} \ \ $[v_{\sigma(1)},v_{\sigma(2)},\ldots,v_{\sigma(n)}]=(-1)^\sigma\cdot [v_1,v_2,\ldots,v_n]$ for any $\sigma\in S_n$.
\end{compactenum}
The $\GL_n\Z$-action on $\St_n$ is defined by $M\cdot[v_1,\ldots,v_n]=[M\cdot v_1,\ldots,M\cdot v_n]$.
\end{maintheorem}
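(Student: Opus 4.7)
The plan is to define a map $\Phi$ from the abelian group $P_n$ defined by the presentation in the statement to $\St_n$, and to prove that it is a bijection. I would set $\Phi([v_1,\ldots,v_n])$ to be the \emph{integral apartment class} in $\St_n=\RH_{n-2}(\Tits_n;\Z)$: the fundamental class of the $(n-2)$-sphere in $\Tits_n$ formed by all flags whose terms are spanned by proper nonempty subsets of $\{v_1,\ldots,v_n\}$, oriented by the given ordering. Relations \ref{rel:sign} and \ref{rel:perm} then hold automatically, since the underlying sphere depends only on the set of lines $\langle v_i\rangle$, while its orientation changes by the sign of the permutation of the ordering. For \ref{rel:manin}, the three apartments $A(v_1,v_2,v_3,\ldots,v_n)$, $A(v_1,v_1+v_2,v_3,\ldots,v_n)$, and $A(v_1+v_2,v_2,v_3,\ldots,v_n)$ share a common $(n-3)$-dimensional sub-sphere consisting of flags built only from the lines $\langle v_1+v_2\rangle,\langle v_3\rangle,\ldots,\langle v_n\rangle$; the latter two apartments fit together along this sub-sphere to give a triangulation of the first, which is the geometric content of \ref{rel:manin}. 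This shows $\Phi$ is well-defined.

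Surjectivity of $\Phi$ is the classical theorem of Ash--Rudolph that integral apartment classes span $\St_n$.

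The bulk of the work is injectivity, for which I would model the relations topologically. Specifically, I would build an $(n-1)$-dimensional simplicial complex $Y$ with a $\GL_n\Z$-action whose top-dimensional simplices correspond (up to the actions encoded by \ref{rel:sign} and \ref{rel:perm}) to ordered bases of $\Z^n$, and such that each instance of \ref{rel:manin} appears as the boundary of an $(n-1)$-simplex of $Y$ that uses an ``augmentation'' vertex of the form $v_i+v_j$. A natural candidate is the complex $\BA$ of augmented partial bases used in \cite{CFPSteinberg}, possibly modified to track the augmentation vertices appearing in \ref{rel:manin}. Granting the right simplicial structure, the presentation of $\St_n$ would follow once one shows $Y$ is Cohen--Macaulay of dimension $n-1$, because then the natural chain map into a resolution of $\St_n$ expresses $\Phi$ as a quotient by exactly the boundary relations coming from $n$-simplices of $Y$, which are precisely \ref{rel:manin}.

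The main obstacle is establishing the Cohen--Macaulay property for $Y$. The expected strategy is an inductive link argument: the link of a simplex $\sigma$ in $\BA$ should itself be an augmented partial basis complex for the quotient lattice $\Z^n/\langle\sigma\rangle$, so a bad-simplex or Quillen-fiber argument in the style of Maazen--van der Kallen reduces the connectivity in rank $n$ to the analogous connectivity in smaller rank. The delicate step, and where new input beyond standard partial-basis arguments is required, is controlling the behavior of the augmentation vertices $v_i+v_j$ under this induction, since it is exactly their presence that forces the Bykovskii relation \ref{rel:manin} rather than only the Solomon--Tits subdivision relations among apartment classes.
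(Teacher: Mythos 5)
Your overall strategy is the one the paper actually follows: encode the generators as frames and the relation R1 as boundaries of simplices containing an augmentation vertex in the complex $\BA$ of partial augmented frames, and reduce the presentation to a Cohen--Macaulay/connectivity theorem for that complex proved by an inductive link-and-retraction argument. However, there are concrete gaps. First, the dimension bookkeeping is off by one, and this is not cosmetic. A frame $\{\l{v_1},\ldots,\l{v_n}\}$ is an $(n-1)$-simplex of $\BA_n$ and an augmented frame is an $n$-simplex, so $\BA_n$ is $n$-dimensional and each instance of R1 arises as the boundary of an \emph{$n$-simplex}, not of an $(n-1)$-simplex; correspondingly what must be proved is that $\BA_n$ is CM of dimension $n$, in particular $(n-1)$-connected. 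The statement you ask for, ``CM of dimension $n-1$,'' is what the non-augmented complex $\B_n$ already satisfies (Maazen), and that level of connectivity yields only generation by integral apartment classes (Ash--Rudolph); the whole point of adjoining the augmentation vertices is to gain one extra degree of connectivity, and this stronger statement is the genuinely new input (Theorem~\ref{maintheorem:basesacon}).

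Second, even granting the connectivity, your injectivity step is missing the bridge from the frame complex to $\St_n$: ``the natural chain map into a resolution of $\St_n$'' is not automatic. The paper introduces the subcomplex $\BA'_n$ of simplices spanning proper direct summands, identifies the presented group with $\HH_{n-1}(\BA_n,\BA'_n;\Z)$ (this is where R1--R3 appear), uses $(n-1)$-connectivity of $\BA_n$ to identify this with $\RH_{n-2}(\BA'_n;\Z)$, and then must still prove $\RH_{n-2}(\BA'_n;\Z)\iso\St_n$; that last step is a Quillen poset-fiber argument for the span map $\Poset(\BA'_n)\to\TitsP_n$, resting on the Solomon--Tits theorem and invoking the connectivity of the smaller complexes $\BA_\ell$ a second time for the fibers. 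None of this appears in your sketch. Finally, the inductive scheme you propose is not quite right as stated: the link of a simplex of $\BA_n$ is \emph{not} an augmented partial basis complex of the quotient lattice. In the paper, links of additive simplices are plain partial-frame complexes, links of standard simplices are controlled only after passing to the subcomplex $\PLink$ and to relative complexes $\BA_{n-k}^{m+k}$ defined inside $\Z^{m+n}$ itself (not a quotient), and the key retraction fails to be simplicial on internally additive simplices and must be constructed after a subdivision (the ``carrying'' phenomenon). Since this is exactly where the real work lies, flagging it as ``the delicate step'' without a mechanism leaves the main content of the theorem unproved.
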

The relation~\ref{rel:sign} can be replaced by the single relation $[-v_1,v_2,\ldots,v_n]=[v_1,v_2,\ldots,v_n]$. Relations~\ref{rel:manin} and \ref{rel:sign} then involve \emph{only the first two vectors}; these relations are the ``stabilization'' of Manin's relations for $\St_2$  from $\GL_2\Z$ to $\GL_n\Z$. In this light, what Theorem~\ref{maintheorem:bykovskii} says is that \emph{no additional relations} are needed to present $\St_n$, once the permutations $S_n$ are taken into account. 

\para{Apartment classes and Bykovskii's proof of Theorem~\ref{maintheorem:bykovskii}} 
The general theory of spherical buildings automatically provides a generating set for $\St_n$.  Namely, every rational basis $\{w_1,\ldots,w_n\}$ for $\Q^n$ determines a spherical apartment in $\Tits_n$ homeomorphic to $S^{n-2}$ whose fundamental class determines an apartment class $[w_1,\ldots,w_n]\in \RH_{n-2}(\Tits_n;\Z)=\St_n$, and the general theory implies that $\St_n$ is generated by these rational apartment classes. However, the generating set in Theorem~\ref{maintheorem:bykovskii} is much smaller: it consists only of the \emph{integral} apartment classes $[v_1,\ldots,v_n]$, i.e.\ those for which $\{v_1,\ldots,v_n\}$ is an integral basis for $\Z^n$. That $\St_n$ is generated by these integral apartment classes was proved by Ash--Rudolph \cite{AshRudolph} in 1979. To do this, they gave an algorithm for expressing an arbitrary rational apartment class as a sum of integral apartment classes.

Bykovskii proved Theorem~\ref{maintheorem:bykovskii} by carefully examining Ash--Rudolph's
algorithm, which requires making many arbitrary choices, and showing that the only ambiguity in its output comes from the relations in Theorem~\ref{maintheorem:bykovskii}. We remark that from this perspective, Theorem~\ref{maintheorem:bykovskii} appears as the integral analogue of Lee--Szczarba's presentation of $\St_n$ as a $\GL_n\Q$-module \cite{LeeSzczarbaCongruence}.


\para{Our proof of Theorem~\ref{maintheorem:bykovskii}}
Our proof of Theorem~\ref{maintheorem:bykovskii} is quite different. It is inspired by our alternate
proof of Ash--Rudolph's theorem in Church--Farb--Putman \cite{CFPSteinberg} and by Manin's original proof of Theorem~\ref{maintheorem:bykovskii} for $\St_2$.  We use topology to show directly that the homology of $\Tits_n$ is generated by integral apartment classes; non-integral apartment classes never show up in our proof.  The key is the \emph{complex of partial augmented frames} for $\Z^n$ defined below, which provides an ``integral model''
for the Tits building $\Tits_n$. We begin first with the more familiar complex of partial frames.

\begin{definition}
Let $V$ be a finite-rank free abelian group.
\begin{compactitem}[leftmargin=25pt]
\item A \emph{line} in $V$ is a $2$-element set $\{v,-v\}$ of primitive vectors in $V$; we denote it by $\l{v}$.
\item A \emph{frame} for $V$ is a set $\{\l{v_1},\ldots,\l{v_n}\}$ of lines such that $\{v_1,\ldots,v_n\}$ is a basis for $V$. 
\item A \emph{partial frame} for $V$ is a frame for a direct summand of $V$, or equivalently a set of lines in $V$ that can be completed to a frame for $V$.  
\end{compactitem}
The \emph{complex of partial frames} for $\Z^n$, denoted $\B_n$, is the simplicial complex
whose $p$-simplices are partial frames for $\Z^n$ of cardinality~$(p+1)$.
\end{definition}

The complex $\B_n$ is $(n-1)$-dimensional, and Maazen~\cite{MaazenThesis} proved that $\B_n$ is $(n-2)$-connected. This connectivity is what we used in \cite{CFPSteinberg} to prove Ash--Rudolph's theorem on generators for $\St_n$. However, to obtain a \emph{presentation} for $\St_n$ this is not enough; we need to attach higher-dimensional cells to $\B_n$ to improve its connectivity.

\paragraph{Improving connectivity: the complex of partial augmented frames.}
To motivate the cells we add, we recall how Manin found his presentation for $\St_2$.

The first key step is to show that in an appropriate sense, the first homology $\HH_1(\B_2;\Z)$ measures exactly the \emph{additional} relations beyond \ref{rel:sign} and \ref{rel:perm} needed to present $\St_2$.  This requires two observations:
\begin{compactenum}
\item The Tits building $\Tits_2$ can be identified with the $0$-skeleton of $\B_2$, 
giving an identification of the reduced chains $\widetilde{C}_0(\B_2;\Z)$ with 
$\St_2=\RH_0(\Tits_2;\Z)=\widetilde{C}_0(\Tits_2;\Z)$.
\item The complex $\B_2$ is the following graph:
\Figure{figure:farey}{Farey}{The complex $\B_2$ is isomorphic to the Farey graph under the identification
that takes the line $\l{(a,b)}\in \Z^2$ to $\frac{a}{b} \in \Q \cup \{\infty\}$.}{100}
\begin{compactitem}
\item The vertices are lines $\l{(a,b)}$, where $(a,b) \in \Z^2$ is a primitive vector.
\item Vertices $\l{(a,b)}$ and $\l{(c,d)}$ are joined by an edge exactly when $\{(a,b),(c,d)\}$ is a basis
for $\Z^2$, or equivalently when $ad-bc = \pm 1$.
\end{compactitem}
If we identify the line $\l{(a,b)}\in \Z^2$ with $\frac{a}{b} \in \Q \cup \{\infty\}$, the complex $\B_2$ is exactly the classical Farey graph; see Figure \ref{figure:farey}. This graph is connected, but not simply-connected.
\end{compactenum}
Combining these two observations, we see that
\[\HH_1(\B_2;\Z)=\ker\big(\,C_1(\B_2;\Z)\onto \widetilde{C}_0(\B_2;\Z)\cong \St_2\,\big).\]
The group $C_1(\B_2;\Z)$ is precisely the abelian group given 
by the presentation with the same generators as in Theorem~\ref{maintheorem:bykovskii}, 
but where we impose only the relations \ref{rel:sign} and \ref{rel:perm}.  Indeed, this
is simply the fact that each ordered basis $\{v_1,v_2\}$ of $\Z^2$ determines an edge of $\B_2$, and this correspondence is unique up to negating (R2) or exchanging (R3) the basis vectors.
We conclude that 
$\HH_1(\B_2;\Z)$ measures whatever {additional} relations beyond \ref{rel:sign} and \ref{rel:perm} are needed to present $\St_2$, as claimed.

The second key step is that a visual examination of Figure \ref{figure:farey} suggests a natural generating set for $\HH_1(\B_2;\Z)$, namely the boundaries of the evident triangles in the Farey
graph.  Under our identification of $\B_2$ with the Farey graph, these triangles consist of triples of vertices
$\{\l{v_1}, \l{v_2}, \l{v_3}\}$ such that $\{v_1,v_2\}$ is a basis for $\Z^2$ and $\pm v_1 \pm v_2 \pm v_3 = 0$
for some choice of signs (in which case $\{v_2,v_3\}$ and $\{v_1,v_3\}$ are also bases
for $\Z^2$).  By reordering and negating we can assume that $v_3 = v_1+v_2$.  
The relation  in $\St_2$ corresponding to the boundary of the triangle $\{\l{v_1},\l{v_2},\l{(v_1+v_2)}\}$ is
\[[v_1,v_1+v_2] - [v_1,v_2] + [v_1+v_2,v_2] = 0,\] 
which is precisely the relation \ref{rel:manin}.
Manin's theorem that the relations \ref{rel:manin} together with \ref{rel:sign} and \ref{rel:perm} suffice
to present $\St_2$ thus follows from the fact that attaching the above triangles to the Farey graph
yields a simply-connected simplicial complex.  In fact, it yields a contractible complex.

This motivates the following definition.

\begin{definition}
Let $V$ be a finite-rank free abelian group.
\begin{compactitem}[leftmargin=25pt]
\item An \emph{augmented frame} for $V$ 
is a collection $\{\l{v_0},\l{v_1},\ldots,\l{v_n}\}$ of lines in $V$
such that $\{\l{v_1},\ldots,\l{v_n}\}$ is a frame for $V$ and $\pm v_0 \pm v_1 \pm v_2 = 0$ for some choice of signs.
\item A \emph{partial augmented frame} for $V$ is a set of lines in $V$ that is either a frame or an
augmented frame for a direct summand of $V$; equivalently, a set of lines is a partial augmented frame for $V$ if it can be completed to an augmented frame for $V$.
\end{compactitem}
The \emph{complex of partial augmented frames} for $\Z^n$, denoted $\BA_n$, is the simplicial complex whose 
$p$-simplices are the partial augmented frames for $\Z^n$ of cardinality~$(p+1)$.
\end{definition}

The final main theorem of this paper is as follows; the definition of a Cohen--Macaulay complex is recalled in \S\ref{section:posets} below.  We remark that this theorem plays a fundamental role in the second author's recent work with Day on
the second homology group of the Torelli subgroup of $\Aut(F_n)$; see \cite{DayPutmanH2}.

\begin{maintheorem}[{\bf $\PartialBasesA_n$ is Cohen--Macaulay}]
\label{maintheorem:basesacon}
For all $n\geq 2$, the complex $\PartialBasesA_n$ is Cohen--Macaulay of dimension $n$. In particular, $\BA_n$ is $(n-1)$-connected.
\end{maintheorem}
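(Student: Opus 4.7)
The plan is to proceed by induction on $n$. The base case $n=2$ is direct: $\BA_2$ is the classical Farey tessellation of the hyperbolic plane (obtained from the Farey graph of Figure~\ref{figure:farey} by filling in all Farey triangles), which is contractible; vertex links are connected infinite trees and edge links are $2$-element sets, so the Cohen--Macaulay conditions hold by inspection.

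For the inductive step, I would invoke the standard characterization: a $d$-dimensional simplicial complex $X$ is Cohen--Macaulay of dimension $d$ if and only if $X$ is $(d-1)$-connected and for every vertex $v\in X$ the link $\Link_X(v)$ is Cohen--Macaulay of dimension $d-1$. Assuming the theorem for all $m<n$, it remains to verify for $\BA_n$: (a) it is $(n-1)$-connected; (b) for every vertex $\l{v}$, the link $\Link_{\BA_n}(\l{v})$ is Cohen--Macaulay of dimension $n-1$.

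For (b), fix $\l{v}$ and a splitting $\Z^n=\langle v\rangle\oplus V'$ with $V'\cong\Z^{n-1}$. Every vertex $\l{w}\in\Link_{\BA_n}(\l{v})$ can be written as $\l{w'+av}$ with $w'\in V'$ primitive and $a\in\Z$; in particular, the subcomplex $L_0\subset\Link_{\BA_n}(\l{v})$ of simplices supported on lines in $V'$ is naturally isomorphic to $\BA_{n-1}$, which is Cohen--Macaulay of dimension $n-1$ by the inductive hypothesis. The task is to construct a simplicial deformation retraction of $\Link_{\BA_n}(\l{v})$ onto $L_0$ by collapsing $\l{w'+av}$ to $\l{w'}$. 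The only subtlety is that two vertices $\l{w_1},\l{w_2}$ of a simplex of $\Link_{\BA_n}(\l{v})$ may project to the same line in $V'$; a short calculation shows this forces $w_1=\pm w_2\pm v$, in which case the augmented triangle $\{\l{v},\l{w_1},\l{w_2}\}$ in $\BA_n$ itself supplies the simplicial homotopy needed to absorb the redundant vertex.

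For (a), which is the main obstacle, I would combine Maazen's theorem that $\B_n$ is $(n-2)$-connected with a bad-simplex argument in the style of Hatcher--Vogtmann. Given a simplicial map $f\colon S^{n-1}\to\BA_n$, I would iteratively push $f$ off augmented simplices, using the Cohen--Macaulay structure on links from (b) to realize each push as a simplicial homotopy, until the image of $f$ lies in $\B_n$; the resulting map then extends over $B^n$ by Maazen. The augmented $n$-simplices of $\BA_n\setminus\B_n$ are the cells needed to realize these homotopies, generalizing the way Farey triangles fill the loops of the Farey graph in the $n=2$ case.
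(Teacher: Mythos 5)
Both halves of your inductive step have genuine gaps, and they sit exactly at the points where the paper has to work hardest. In part (a) the endgame fails: after pushing $f\colon S^{n-1}\to\BA_n$ into $\B_n$ you propose to fill it ``by Maazen,'' but Maazen gives only $(n-2)$-connectivity of $\B_n$, which is one dimension short of what you need, and the shortfall is real: $\B_n$ is an $(n-1)$-dimensional complex with nontrivial top-dimensional homotopy. Already for $n=2$ the Farey graph is not simply connected --- as your own closing sentence observes, the loops of the Farey graph are filled only by the Farey triangles, not inside the graph --- so a sphere of the critical dimension $p=n-1$ that has been pushed into $\B_n$ generally bounds nothing there. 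The augmented simplices must participate in the filling itself, not merely be pushed away from. In the paper this is accomplished by an induction on the maximal last coordinate $R(\phi)$ of the sphere: the final contraction takes place in the star of $\l{e_{n}}$, and each reduction of $R$ uses that certain vertex links in the augmented complex are one dimension more highly connected than the corresponding links in $\B_n$ --- connectivity that is part of the statement being proved (via the relative complexes $\BA_{n'}^{m'}$ of Theorem~\ref{theorem:basesacon}), not something imported from Maazen.

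In part (b), your projection $\l{(w'+av)}\mapsto\l{w'}$ is indeed a well-defined simplicial retraction of $\Link_{\BA_n}(\l{v})$ onto $L_0\iso\BA_{n-1}$ (your collision analysis is correct, and since the projection is linear it genuinely avoids the carrying problem that forces the subdivision in Proposition~\ref{prop:baretract2}). But a retraction transfers connectivity in the wrong direction: it shows that a retract of a highly connected complex is highly connected, whereas you need the ambient link to inherit connectivity from its subcomplex $L_0$, i.e.\ you need $\mathrm{id}\simeq i\circ\pi$. You have not produced this homotopy, and it is not a contiguity: a vertex $\l{(w'+av)}$ with $\abs{a}\geq 2$ is not joined to $\l{w'}$ in the link, and the intermediate one-step moves $\l{(w'+av)}\mapsto\l{(w'+(a-1)v)}$ also fail contiguity (the union of a simplex with its image need not span a simplex), so the difficulty you hoped to avoid reappears here in full force. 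Moreover, even granting a homotopy equivalence, Cohen--Macaulayness is not homotopy-invariant: you must still show that the links of all higher-dimensional simplices are spherical, and these are not copies of $\BA_k$. The link of an additive simplex is a partial-frame complex with frozen lines, so you need a CM strengthening of Maazen (Theorem~\ref{theorem:basescon}), and the link of a standard simplex of positive dimension contains vertices such as $\l{(v+w)}\in\Link_{\BA_n}(\{\l{v},\l{w}\})$ on which your projection is undefined; this is precisely why the paper introduces the relative complexes $\BA_n^m$, the subcomplexes $\PLink$, and the coning argument of Proposition~\ref{prop:makecm}. As stated, your induction on the plain complexes $\BA_n$ does not close.
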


\begin{remark} Since $\BA_n$ is $n$-dimensional, the connectivity in Theorem~\ref{maintheorem:basesacon} cannot be improved unless $\BA_n$ is contractible.  Since $\BA_2$ is the complex obtained by filling in the triangles in the Farey graph,
the complex $\BA_2$ is contractible. 
However, it seems unlikely that $\BA_n$ would be
contractible for any $n \geq 3$.
\end{remark}

\para{Outline} The logical relation between our three main theorems is that Theorem~\ref{maintheorem:basesacon} $\implies$  Theorem~\ref{maintheorem:bykovskii} $\implies$ Theorem~\ref{maintheorem:vanishing}. However, the proof of Theorem~\ref{maintheorem:basesacon} occupies more than half of the paper, so we defer the proof of Theorem~\ref{maintheorem:basesacon} until \S\ref{section:proofbasesacon}.
We prove Theorem~\ref{maintheorem:bykovskii} in \S\ref{section:bykovskiiproof} and prove Theorem~\ref{maintheorem:vanishing} in \S\ref{section:vanishing}, both assuming Theorem~\ref{maintheorem:basesacon}.

\para{Acknowledgments} We are very grateful to Benson Farb, who was closely involved during the development of these results, but declined to be listed as a coauthor.

\section{Generators and relations for the Steinberg module}
\label{section:bykovskiiproof}

In this section, we derive Theorem~\ref{maintheorem:bykovskii} from Theorem~\ref{maintheorem:basesacon}, which
will be proved in \S\ref{section:proofbasesacon}.  We begin in \S\ref{section:posets} with some basic
results about posets.  We then prove some results about linear algebra over $\Z$ in \S \ref{section:linearz}.
Finally, we prove Theorem~\ref{maintheorem:bykovskii} in \S\ref{section:bykovskiiproofsec}.

\subsection{The topology of posets}
\label{section:posets}
Recall that a $d$-dimensional complex is \emph{$d$-spherical} if it is $(d-1)$-connected, in which case it is homotopy equivalent to a wedge of $d$-spheres.   
A simplicial complex $X$ is \emph{Cohen--Macaulay} (abbreviated CM) of dimension $d$ if the following conditions hold.
\begin{compactitem}
\item $X$ is $d$-spherical.
\item For every $(k-1)$-simplex $\sigma^{k-1}$ of $X$, the link $\Link_X(\sigma^{k-1})$
is $(d-k)$-spherical.
\end{compactitem}

\begin{remark}
This should be compared with the definition of a combinatorial $d$-manifold, which is a 
$d$-dimensional simplicial complex $M$ such that for every $(k-1)$-simplex $\sigma^{k-1}$, the link $\Link_M(\sigma^{k-1})$ is a combinatorial $(d-k)$-sphere. 
\end{remark}

Let $A$ be a poset.  Recall that the \emph{geometric realization} of $A$ is the simplicial complex $\abs{A}$ whose
$k$-simplices are chains $a_0 \lneq a_1 \lneq \cdots \lneq a_k$
in $A$.  Whenever we say that $A$ has some topological property, we mean that $\abs{A}$ has that topological
property.  In particular, we define $\HH_{\ast}(A)$ to equal $\HH_{\ast}(\abs{A})$.  The following is a key
example.

\begin{example}
Let $X$ be a simplicial complex.  Define $\Poset(X)$ to be the poset of simplices of $X$ under inclusion.  Then
$\abs{\Poset(X)}$ is isomorphic to the barycentric subdivision of $X$.  In particular, there is a canonical
isomorphism $\HH_{\ast}(\Poset(X)) \cong \HH_{\ast}(X)$.
\end{example}

For $a \in A$, the \emph{height} of $a$, denoted $\Height(a)$, is the maximal $k$ such that there exists
a chain $a_0 \lneq a_1 \lneq \cdots \lneq a_k = a$ in $A$.
If $B$ is another poset and $F\colon A \rightarrow B$ is a poset map, for $b \in B$ we define
$F_{\leq b}$ to be the subposet $\Set{$a \in A$}{$F(a) \leq b$}$ of $A$.  With these definitions, we
have the following proposition, which slightly generalizes a result of Quillen.

\begin{proposition}
\label{prop:quillen} 
Fix $m\geq 0$ and let $F\colon A\to B$ be a map of posets.  Assume that $B$ is CM of dimension $d$ 
and that for all $b\in B$, the fiber $F_{\leq b}$ is $(\Height(b)+m)$-spherical (or more generally, that $\RH_q(F_{\leq b})=0$ for $q\neq \Height(b)+m$).
Then $F\colon A\to B$ is $(d+m)$-acyclic. In particular, $F_{\ast}\colon \RH_d(A)\to \RH_d(B)$ 
is an isomorphism if $m\geq 1$.
\end{proposition}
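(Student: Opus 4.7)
The proposition is a homological refinement of Quillen's Theorem A for poset maps, where the contractibility of fibers is replaced by a concentration condition on their reduced homology, with the degree of concentration varying with the height of the base point. My plan is to prove it using the standard Quillen double-fibration spectral sequence.

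The setup is as follows: form the bisimplicial set $X_{\bullet,\bullet}$ whose $(p,q)$-simplices are pairs of chains $(a_0 < \cdots < a_p)$ in $A$ and $(b_0 < \cdots < b_q)$ in $B$ satisfying $F(a_p) \leq b_0$. The projection $\pi_A\colon \abs{X_{\bullet,\bullet}} \to \abs{A}$ has fibers $\abs{B_{\geq F(a_p)}}$, each contractible since it has minimum $F(a_p)$; hence $\pi_A$ is a homotopy equivalence, so $\HH_*(\abs{X_{\bullet,\bullet}}) \iso \HH_*(A)$. The complementary projection $\pi_B$ produces a spectral sequence
\[E^2_{p,q} = \HH_p(B;\, \mathcal{H}_q) \Longrightarrow \HH_{p+q}(A),\]
where $\mathcal{H}_q$ is the local coefficient system on $B$ assigning $\HH_q(F_{\leq b})$ to each $b \in B$, with transitions induced by the inclusions $F_{\leq b} \hookrightarrow F_{\leq b'}$ for $b \leq b'$. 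Moreover $\pi_B$ realizes $\abs{F}$ under the homotopy equivalence $\pi_A$, so the edge maps of the spectral sequence implement $F_*$.

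Next I apply the two hypotheses to locate the support of $E^2$. The concentration hypothesis says $\mathcal{H}_q$ is supported on height-$(q-m)$ elements of $B$; hence in rows $q\geq 1$, only vertices of a single height contribute. The Cohen--Macaulay hypothesis on $B$ restricts the $p$-degrees in which homology with such a height-supported coefficient system can be nonzero: schematically, for a local system on $B$ supported on height-$h$ elements, nonvanishing $\HH_p$ occurs only when the link of such a vertex---which by CM is $(d-h-1)$-spherical---contributes in degree $p = d-h$. Combining these, $E^2_{p,q} \neq 0$ in rows $q\geq 1$ forces $p + q = d + m$, while the $q=0$ row (where $\mathcal{H}_0$ is the constant local system $\Z$, since $m\geq 1$ forces each fiber to be connected) gives $E^2_{p,0} = \HH_p(B)$.

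The conclusion follows by analyzing convergence. The $E^2$ page is supported on the $q=0$ row (computing $\HH_*(B)$) together with the single anti-diagonal $p + q = d + m$. For $k < d + m - 1$, the entry $E^\infty_{k,0}$ is unaffected by higher differentials, since any incoming $d^r$ from the anti-diagonal would have $r + (k-r+1) = d+m$, i.e.\ $k = d+m-1$. So $E^\infty_{k,0} = \HH_k(B)$, and the edge-map identification yields $F_*\colon \HH_k(A) \iso \HH_k(B)$ for $k < d+m-1$. The remaining degrees $k = d+m-1$ and $k = d+m$---including the iso in degree $d$ when $m=1$ claimed in the ``in particular'' clause---require careful control of the differentials $d^r$ ($r\geq 2$) from the anti-diagonal into $E^r_{d+m-1,\,0}$. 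This is the main obstacle; I would resolve it by comparison with the spectral sequence for $\operatorname{id}_B \colon B \to B$, whose fibers $B_{\leq b}$ are contractible (each has maximum $b$) and which therefore converges trivially to $\HH_*(B)$; naturality of the bisimplicial construction under the map induced by $F$ then transfers acyclicity from the identity case to $F$.
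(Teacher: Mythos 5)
Your approach is the same as the paper's: the printed proof simply defers to Quillen's Theorem 9.1, whose engine is exactly the spectral sequence $E^2_{p,q}=\HH_p(B;\mathcal{H}_q)\Rightarrow \HH_{p+q}(A)$ that you build from the double fibration, and your analysis of the $E^2$-page (bottom row equal to $\HH_*(B)$, all other nonzero entries confined to the anti-diagonal $p+q=d+m$) is correct. One imprecision: what is $(d-h-1)$-spherical is not the link of a height-$h$ vertex $b$ (that link is $\abs{B_{<b}}\ast\abs{B_{>b}}$, which is $(d-1)$-spherical) but the upper interval $B_{>b}$; it is $\RH_{p-1}(B_{>b})$ that appears when one computes $\HH_p(B;T)$ for a system $T$ supported in height $h$, whence the concentration in $p=d-h$. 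Note also that your identification of $\mathcal{H}_0$ with the constant system $\Z$ uses $m\geq 1$; for $m=0$ the statement is literally Quillen's theorem, so nothing is lost there.

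The genuine problem is the final step. You declare the degrees $k=d+m-1$ and $k=d+m$ to be ``the main obstacle'' and leave them to an unexecuted comparison with $\mathrm{id}_B$ --- but for $m=1$, the only case the paper uses, the degree $d=d+m-1$ is exactly where the ``in particular'' clause lives, so as written your argument does not deliver the conclusion that is actually needed. The proposed patch is also shaky: the induced map to the spectral sequence of $\mathrm{id}_B$ lands in a collapsed spectral sequence, and commutativity with a zero differential downstairs says nothing about the differentials upstairs. Fortunately the obstacle is illusory. In the homological spectral sequence of a bisimplicial set the differentials run $d^r\colon E^r_{p,q}\to E^r_{p-r,\,q+r-1}$, i.e.\ they \emph{raise} the row index by $r-1\geq 1$; the bottom row therefore never receives a differential, and a differential leaving the anti-diagonal from a row $q\geq 1$ lands in a row $\geq 2$ at total degree $d+m-1$, where every entry vanishes. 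Likewise $E^2_{d,0}=\HH_d(B)$ emits only into total degree $d-1<d+m$ in rows $\geq 1$, where again everything vanishes. Hence every $d^r$ with $r\geq 2$ is zero, $E^2=E^\infty$, and the edge map gives $F_*\colon \HH_k(A)\xrightarrow{\iso}\HH_k(B)$ for all $k<d+m$ (and surjectivity in degree $d+m$ for free, since $\HH_{d+m}(B)=0$) with no further argument. Your bookkeeping ``$r+(k-r+1)=d+m$'' indicates you had the differentials running in the wrong direction, which is the source of the phantom difficulty.
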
\begin{proof}
This can be proved exactly like \cite[Theorem 9.1]{QuillenPoset}. The necessary conditions on $B$ are satisfied since it is CM. Quillen's hypothesis that $F_{\leq b}$ is $\Height(b)$-spherical is used only to conclude that $\RH_q(F_{\leq b})=0$ for $q\neq \Height(b)$, so we can replace this with the hypothesis that $\RH_q(F_{\leq b})=0$ for $q\neq \Height(b)+m$. We conclude that the spectral sequence  $E^2_{pq}\implies \HH_{p+q}(A)$ of \cite[(9.3)]{QuillenPoset} vanishes in the range $p+q<d+m$, except for $E^2_{d0}=\HH_d(B)$. Therefore $F\colon A\to B$ is $(d+m)$-acyclic, as claimed.
\end{proof}

\subsection{Linear algebra over \texorpdfstring{$\Z$}{Z}}
\label{section:linearz}

We now record some simple facts about linear algebra over $\Z$ that are classical and well-known to experts (but whose
proofs we include for completeness).

\begin{lemma}
\label{lemma:intersect}
If $V$ is a subspace of $\Q^n$, then $V \cap \Z^n$ is a direct summand of $\Z^n$.
\end{lemma}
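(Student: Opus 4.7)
The plan is to reduce the claim to the standard fact that a subgroup $H \leq \Z^n$ is a direct summand if and only if the quotient $\Z^n/H$ is torsion-free, equivalently, if and only if $H$ is \emph{saturated} in $\Z^n$ in the sense that whenever $k \in \Z \setminus \{0\}$ and $x \in \Z^n$ satisfy $kx \in H$, we must have $x \in H$. This equivalence is immediate from the structure theorem for finitely generated abelian groups, or more concretely from Smith normal form applied to an inclusion of bases.

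Given this reduction, the actual content of the lemma is trivial. Suppose $k \in \Z \setminus \{0\}$ and $x \in \Z^n$ with $kx \in V \cap \Z^n$. Then $kx \in V$, and since $V$ is a $\Q$-subspace of $\Q^n$, we may divide by $k$ in $\Q^n$ to conclude that $x = \tfrac{1}{k}(kx) \in V$. Combined with the hypothesis $x \in \Z^n$, this gives $x \in V \cap \Z^n$, proving saturation.

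There is essentially no obstacle here; the only thing to be careful about is separating the abstract algebraic fact (quotients of $\Z^n$ torsion-free implies summand) from the specific observation about $V$ being a $\Q$-subspace. If desired, the proof could instead be phrased constructively: choose a $\Z$-basis $v_1, \dots, v_r$ of $V \cap \Z^n$, extend it to a $\Q$-basis of $V$, and then to a $\Q$-basis of $\Q^n$; applying Smith normal form to the change-of-basis matrix between this basis and the standard basis of $\Z^n$ produces a complementary summand. But the saturation argument is shorter and is what I would write.
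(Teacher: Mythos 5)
Your proof is correct and takes essentially the same route as the paper: both arguments come down to showing that $\Z^n/(V\cap \Z^n)$ is torsion-free (hence free, hence the inclusion splits), with $V$ being a $\Q$-subspace as the reason. The only cosmetic difference is that you verify torsion-freeness of the quotient via the element-wise saturation criterion, while the paper realizes the quotient concretely as $\psi(\Z^n)\subset\Q^n$ for a linear map $\psi$ with kernel $V$.
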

\begin{proof}
Write $V$ as $V= \ker(\psi)$ for some linear map $\psi\colon\Q^n \rightarrow \Q^n$, and set $W = \psi(\Z^n) \subset \Q^n$. Since $V \cap \Z^n = \ker(\psi|_{\Z^n})$, we have a short exact sequence
\[0 \longrightarrow V \cap \Z^n \longrightarrow \Z^n \longrightarrow W \longrightarrow 0\]
of $\Z$-modules.  Since $W$ is a $\Z$-submodule of $\Q^n$, it must be torsion-free.  From this and the fact
that $W$ is finitely generated, we deduce that $W$ is a free $\Z$-module, and hence the above short exact sequence
splits.  The lemma follows.
\end{proof}

\begin{corollary}
\label{corollary:bijection}
Let $\mathcal{X}$ be the set of subspaces of $\Q^n$ and let
$\mathcal{X}'$ be the set of direct summands of $\Z^n$.  Then the map $\mathcal{X} \rightarrow \mathcal{X}'$ taking
$V \in \mathcal{X}$ to $V \cap \Z^n \in \mathcal{X}'$ is a bijection.
\end{corollary}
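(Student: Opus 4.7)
The plan is to exhibit an explicit two-sided inverse to the map $V\mapsto V\cap \Z^n$, namely the map $\mathcal{X}'\to \mathcal{X}$ sending a direct summand $W$ of $\Z^n$ to its $\Q$-span $W\otimes\Q\subset \Q^n$. Lemma~\ref{lemma:intersect} has already verified that $V\mapsto V\cap\Z^n$ actually lands in $\mathcal{X}'$, so it remains to check that the two compositions are the identity.

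For the composition $\mathcal{X}\to \mathcal{X}'\to \mathcal{X}$, I would start with a subspace $V\subset \Q^n$ and show $(V\cap \Z^n)\otimes\Q = V$. The inclusion $\subset$ is immediate. For $\supset$, pick any $\Q$-basis of $V$; clearing denominators replaces each basis vector by an integral multiple lying in $V\cap \Z^n$, and these still span $V$ over $\Q$.

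For the composition $\mathcal{X}'\to \mathcal{X}\to \mathcal{X}'$, I would start with a direct summand $W$ of $\Z^n$ and show $(W\otimes\Q)\cap\Z^n = W$. Again $\supset$ is immediate. For $\subset$, use that $W$ is a direct summand to write $\Z^n = W\oplus W'$ for some complementary summand $W'$. If $v\in\Z^n$ lies in $W\otimes\Q$, decompose $v = w+w'$ with $w\in W$ and $w'\in W'$; then $w'\in W'\cap (W\otimes\Q) = 0$ inside $\Q^n$, since $W\otimes\Q$ and $W'\otimes\Q$ are complementary $\Q$-subspaces of $\Q^n$. Hence $v=w\in W$.

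No step here looks like a genuine obstacle; the content of the corollary is really carried by Lemma~\ref{lemma:intersect}, and what remains is routine bookkeeping about the relationship between direct summands of $\Z^n$ and the $\Q$-subspaces they span. The only subtlety worth stating carefully is the final one: a vector of $\Z^n$ lying in $W\otimes\Q$ must actually lie in $W$ (not merely in $\frac{1}{k}W$ for some $k$), and this is exactly where the hypothesis that $W$ is a direct summand—rather than an arbitrary subgroup—is used.
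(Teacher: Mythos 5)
Your proof is correct and takes the same approach as the paper, which simply declares $W\mapsto W\otimes_\Z\Q$ to be the inverse and leaves the verification implicit. Your explicit check of the two compositions, including the observation that the direct-summand hypothesis is what forces $(W\otimes\Q)\cap\Z^n=W$, fills in exactly the routine details the paper omits.
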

\begin{proof}
Lemma \ref{lemma:intersect} implies that the indicated map lands in $\mathcal{X}'$; the inverse is the map $\mathcal{X}'\to \mathcal{X}$ taking $A\in \mathcal{X}'$ to $A\otimes_\Z \Q\in \mathcal{X}$.
\end{proof}

\begin{lemma}
\label{lemma:transitive}
Let $A$ and $B$ be direct summands of $\Z^n$ such that $A \subset B$.  Then $A$ is a direct
summand of $B$.
\end{lemma}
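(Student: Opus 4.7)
The plan is to bootstrap the hypothesis that $A$ is a summand of the ambient $\Z^n$ into the desired splitting of $A$ inside $B$, using the freeness of finitely generated torsion-free abelian groups. There is essentially no obstacle: the statement is elementary, and the only subtlety is to avoid confusing ``subspace'' arguments (as in Lemma \ref{lemma:intersect}) with the intrinsic structure of $B$, since here $B$ is not all of $\Z^n$.

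The cleanest route is to pick a complementary summand. Since $A$ is a direct summand of $\Z^n$, fix $C \subset \Z^n$ with $\Z^n = A \oplus C$. I claim that $B = A \oplus (B \cap C)$. The sum is direct because $A \cap (B \cap C) \subset A \cap C = 0$. For the sum, given any $b \in B$, write $b = a + c$ with $a \in A$ and $c \in C$, using the decomposition $\Z^n = A \oplus C$. Since $A \subset B$ by hypothesis, $a \in B$, and therefore $c = b - a \in B$, so $c \in B \cap C$. This exhibits $A$ as a direct summand of $B$.

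An equally short alternative would be to show that $B/A$ is torsion-free: if $k b \in A$ for some $b \in B$ and nonzero $k \in \Z$, then because $\Z^n/A$ is torsion-free (as $A$ is a summand of $\Z^n$), we get $b \in A$. Hence $B/A$ is a finitely generated torsion-free abelian group, so free, and the short exact sequence $0 \to A \to B \to B/A \to 0$ splits. I would probably present the first argument, since it is concrete and avoids invoking the classification of finitely generated abelian groups.
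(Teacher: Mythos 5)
Your primary argument is correct but takes a genuinely different route from the paper. The paper argues intrinsically: since $A$ is a summand of $\Z^n$, the quotient $\Z^n/A$ is torsion-free, hence its subgroup $B/A$ is torsion-free and finitely generated, hence free, so the sequence $0 \to A \to B \to B/A \to 0$ splits --- exactly the alternative you sketch in your second paragraph. Your preferred argument instead fixes a complement $C$ with $\Z^n = A \oplus C$ and verifies directly that $B = A \oplus (B \cap C)$; the check is complete and correct (directness from $A \cap C = 0$, surjectivity of the sum from $A \subset B$). What your argument buys is generality and economy: it uses nothing about $\Z$ beyond the summand hypothesis, so it works verbatim for modules over an arbitrary ring and avoids the classification of finitely generated torsion-free abelian groups. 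What the paper's choice buys is uniformity: it reuses the same ``torsion-free and finitely generated, hence free, hence the sequence splits'' template as its Lemma~\ref{lemma:intersect}, and that template is also what makes the extension to Dedekind domains mentioned in Remark~\ref{remark:summandofsummand} transparent (where one replaces ``free'' by ``projective''). Either proof is acceptable here.
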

\begin{proof}
Since $A$ is a direct summand of $\Z^n$, the quotient $\Z^n / A$ is torsion-free.  The
quotient $B/A$ is torsion-free, being contained in $\Z^n/A$. Since $B/A$ is finitely generated, it is in fact free. Hence the short exact sequence
\[0 \longrightarrow A \longrightarrow B \longrightarrow B/A \longrightarrow 0\]
splits, as desired.
\end{proof}

\subsection{The proof of Theorem~\texorpdfstring{\ref{maintheorem:bykovskii}}{B}}
\label{section:bykovskiiproofsec}

We now prove Theorem~\ref{maintheorem:bykovskii}.  During our proof
of Theorem~\ref{maintheorem:bykovskii}, we will use Theorem~\ref{maintheorem:basesacon} in two different places; the proof of Theorem~\ref{maintheorem:basesacon} is postponed until \S\ref{section:proofbasesacon}.

\paragraph{The subcomplex $\BA'_n$.}
We begin by defining a subcomplex $\BA'_n$ of $\BA_n$.  Consider a simplex $\sigma = \{\l{v_1},\ldots,\l{v_k}\}$ of
$\BA_n$.  By definition, the submodule $\Span_\Z(v_1,\ldots,v_k)$ of $\Z^n$ is a direct summand (of rank $k$
if $\sigma$ is a partial frame and of rank $(k-1)$ otherwise).  Let $\BA'_n$
be the subcomplex of $\BA_n$ consisting of simplices $\sigma = \{\l{v_1},\ldots,\l{v_k}\}$ of $\BA_n$ such that
$\Span_\Z(v_1,\ldots,v_k)$ is a \emph{proper} direct summand of $\Z^n$.  The only simplices that are
omitted from $\BA'_n$ are the frames of $\Z^n$ (which are $(n-1)$-simplices) and the augmented frames of $\Z^n$ (which are $n$-simplices).

The proof now has three main steps.

\begin{step}
\label{step:identifypresentation}
The abelian group described by the presentation in Theorem~\ref{maintheorem:bykovskii} coincides with the relative homology $\HH_{n-1}(\BA_n,\BA'_n;\Z)$.
\end{step}

To prove this, we will compute the relative homology via the relative simplicial chain complex $C_{\ast}(\BA_n,\BA'_n)$. 
Since the $(n-2)$-skeleton of $\BA_n$ is contained in $\BA'_n$, we have $C_k(\BA_n,\BA'_n)=0$ for $k\leq n-2$.  It
follows that
\[\HH_{n-1}(\BA_n,\BA'_n;\Z) = \coker(C_n(\BA_n,\BA'_n) \xrightarrow{\partial} C_{n-1}(\BA_n,\BA'_n)).\]
Define $\I_0 = C_{n-1}(\BA_n,\BA'_n)$ and $\I_1 = C_n(\BA_n,\BA'_n)$.  Our goal is to describe $\I_0$ and
$\I_1$ and the differential $\partial$.

The simplices that contribute to $\I_0$ are the $(n-1)$-simplices of $\BA_n$ that do not lie in $\BA'_n$, i.e.\ those
corresponding to frames $\{\l{v_1},\ldots,\l{v_n}\}$ for $\Z^n$.  To specify such a frame, it is
enough to give the vectors $v_1,\ldots,v_n$. The only ambiguity is that multiplying the vectors $v_i$ by $\pm 1$
does not change the frame, nor does permuting the vectors; however, permuting the vectors does change the orientation of the corresponding simplex.  We deduce that $\I_0$ is the abelian
group with generators the set of formal symbols $\x{v_1\ldots,v_n}$ for bases $\{v_1,\ldots,v_n\}$ of $\Z^n$ subject
to the following relations.
\begin{compactenum}[label={\normalfont S\arabic*.},ref={S\arabic*},start=2]
\item \label{rel:Ssign} $\x{\pm v_1, \pm v_2,\ldots,\pm v_n}=\x{v_1,v_2,\ldots,v_n}$ for any choices of signs.
\item \label{rel:Sperm} $\x{v_{\sigma(1)},v_{\sigma(2)},\ldots,v_{\sigma(n)}}=(-1)^\sigma\cdot \x{v_1,v_2,\ldots,v_n}$ for any $\sigma\in S_n$.
\end{compactenum}

The simplices that contribute to $\I_1$ are the $n$-simplices of $\BA_n$ that do not lie in $\BA'_n$ (a vacuous
condition since $\BA'_n$ does not contain any $n$-simplices).  These correspond to augmented frames
$\{\l{v_0},\ldots,\l{v_n}\}$ for $\Z^n$.  By definition, $\{v_1,\ldots,v_n\}$ is a basis for $\Z^n$ and
$\pm v_0 \pm v_1 \pm v_2 = 0$ for some choice of signs.  Multiplying $v_0$ and $v_1$ and $v_2$ by
appropriate choices of signs, we can arrange for $v_0 = v_1 + v_2$.  For $3 \leq i \leq n$, the set
$\{v_0,\ldots,\widehat{v}_i,\ldots,v_n\}$ spans a proper direct summand of $\Z^n$, so this term of the boundary vanishes in $\I_0$.  This implies that under the boundary map $\partial$, the generator of $\I_1$ corresponding to the augmented frame
$\{\l{(v_1+v_2)},\l{v_1}\ldots,\l{v_n}\}$ has image in $\I_0$ equal to \[\x{v_1,\ldots,v_n} - \x{v_1+v_2,v_2,\ldots,v_n} + \x{v_1+v_2,v_1,v_3,\ldots,v_n}.\]
Applying the relation \ref{rel:Ssign} and rearranging, we see that $\HH_{n-1}(\BA_n,\BA'_n;\Z) = \coker(\partial)$ is the quotient of $\I_0$ by
the set of relations
\begin{compactenum}[label={\normalfont S\arabic*.},ref={S\arabic*}]
\item\label{rel:Smanin} $\x{v_1,v_2,v_3,\ldots,v_n}=\x{v_1,v_1+v_2,v_3,\ldots,v_n}+\x{v_1+v_2,v_2,v_3,\ldots,v_n}$.
\end{compactenum}
The relations \ref{rel:Smanin}, \ref{rel:Ssign}, and \ref{rel:Sperm} correspond exactly to the relations \ref{rel:manin}, \ref{rel:sign}, and \ref{rel:perm} in Theorem~\ref{maintheorem:bykovskii}, yielding the identity claimed in Step~\ref{step:identifypresentation}.

\begin{step}
\label{step:simplifyidentification}
We have $\HH_{n-1}(\BA_n,\BA'_n;\Z)\iso\RH_{n-2}(\BA'_n;\Z)$.
\end{step}

This is our first invocation of Theorem~\ref{maintheorem:basesacon}, 
which states that $\BA_n$ is CM of dimension $n$. 
In particular,
\[\HH_{n-1}(\BA_n;\Z) = \HH_{n-2}(\BA_n;\Z) = 0.\]
From the long exact sequence for relative homology, we obtain the desired isomorphism.

\begin{step}
\label{step:finalidentification}
We have $\RH_{n-2}(\BA'_n;\Z) \cong \St_n$.
\end{step}

Let $\TitsP_n$ denote the poset of proper nonzero direct summands of $\Z^n$ under inclusion.
By Corollary \ref{corollary:bijection}, the poset $\TitsP_n$
is isomorphic to the poset of proper nonzero $\Q$-subspaces of $\Z^n$, so its geometric realization $\abs{\TitsP_n}$ can be identified with the Tits building $\Tits_n$ from the introduction, whose homology is the Steinberg module.  In other words,
\[\RH_{n-2}(\TitsP_n;\Z)\iso \RH_{n-2}(\Tits_n;\Z)= \St_n.\]
Recall that $\Poset(\BA'_n)$ is the poset of simplices of $\BA'_n$.  There is a poset map
\[F\colon \Poset(\BA'_n) \rightarrow \TitsP_n\]
defined by
\[F(\{\l{v_1},\ldots,\l{v_k}\}) = \Span_\Z(\l{v_1},\ldots,\l{v_k}).\]
We remark that $F$ can only be defined on $\BA'_n$ and not on $\BA_n$ since $\TitsP_n$ consists
of \emph{proper} direct summands.  To prove the isomorphism claimed in Step~\ref{step:finalidentification}, we will use
Proposition~\ref{prop:quillen} (with $d = n-2$ and $m=1$) to prove that $F$ induces an isomorphism 
$F_{\ast}\colon \RH_{n-2}(\BA'_n) \xrightarrow{\iso} \RH_{n-2}(\TitsP_n;\Z)$.

We need to verify that $F$ satisfies the conditions of Proposition~\ref{prop:quillen}.
The Solomon-Tits theorem 
states that $\TitsP_n$ is CM of dimension $n-2$ (see e.g. \cite[Remark IV.4.3]{BrownBuildings} or \ \cite[Example~8.2]{QuillenPoset}). It remains to verify the second condition of Proposition~\ref{prop:quillen} for each direct summand $V \in \TitsP_n$.

If $\ActualRank(V)=1$, the fiber $F_{\leq V}$ is easy to describe: a rank-1 direct summand $V$ contains only one line, so $F_{\leq V}$ is a single point. In particular, the hypothesis $\RH_q(\F_{\leq V})=0$ holds for \emph{all} $q$ in this case.

Now consider a direct summand $V$ with $\ActualRank(V)=\ell\geq 2$.  A partial augmented frame $\{\l{v_1},\ldots,\l{v_k}\}$ lies in $F_{\leq V}$ if and only if $\Span_\Z(\l{v_1},\ldots,\l{v_k})$ is contained in $V$; by Lemma \ref{lemma:transitive}, this holds if and only if $\Span_\Z(\l{v_1},\ldots,\l{v_k})$ is a direct summand of $V$. In other words, the poset $F_{\leq V}$ consists of those collections of lines that form a partial augmented frame for $V$.  Choosing an isomorphism $V\iso \Z^\ell$, we therefore obtain an identification 
\[F_{\leq V} \iso \BA_\ell.\]
We now invoke Theorem~\ref{maintheorem:basesacon} for the second time: it states that $\BA_\ell$ is $\ell$-spherical, so $F_{\leq V}$ is $\ActualRank(V)$-spherical. Since $\Height(V)=\ActualRank(V)-1$, this verifies the desired hypothesis for $F_{\leq V}$.

\section{The vanishing theorem}
\label{section:vanishing}

In this section, we use Theorem~\ref{maintheorem:bykovskii} to prove Theorem~\ref{maintheorem:vanishing}. The actual proof
is contained in \S\ref{section:vanishingproof}.  This is preceded by 
\S\ref{section:vanishinglemmas}, which contains some preliminary lemmas.

\subsection{Ingredients of the vanishing theorem}
\label{section:vanishinglemmas}

This section contains two ingredients needed for the proof of Theorem
\ref{maintheorem:vanishing}.  The first is as follows.

\begin{lemma}
\label{lemma:flatresolution}
Let $G$ be a group and let $M$ and $N$ be $G$-modules.  Assume that $N$ is a
vector space over a field of characteristic $0$.  Also, let
\[\cdots \longrightarrow F_2 \longrightarrow F_1 \longrightarrow F_0 \longrightarrow M \longrightarrow 0\]
be a resolution of $M$ by flat $G$-modules.  Then the homology of the chain complex
\[\cdots \longrightarrow F_2 \otimes_G N \longrightarrow F_1 \otimes_G N \longrightarrow F_0 \otimes_G N \longrightarrow 0\]
equals $\HH_{\ast}(G;M \otimes N)$, where $G$ acts diagonally on $M \otimes N$.
\end{lemma}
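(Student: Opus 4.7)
The plan is to upgrade $F_\bullet \to M$ to a flat resolution of the diagonal $\Z G$-module $M \tensor N$, and then compute $\HH_*(G; M \tensor N) = \Tor^{\Z G}_*(\Z, M \tensor N)$ from this resolution.

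Since $N$ is a vector space over a characteristic $0$ field, it is torsion-free and hence flat over $\Z$. Tensoring the resolution $F_\bullet \to M$ with $N$ over $\Z$ therefore preserves exactness, yielding an acyclic complex of $\Z G$-modules
\[\cdots \longrightarrow F_1 \tensor N \longrightarrow F_0 \tensor N \longrightarrow M \tensor N \longrightarrow 0\]
with the diagonal $G$-action.

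The key technical step is to show that each $F_q \tensor N$ is flat as a $\Z G$-module under the diagonal action. For the model case $F_q = \Z G$, I would use the twist $g \tensor n \mapsto g \tensor g^{-1}n$, which a direct check shows is a $\Z G$-module isomorphism from $\Z G \tensor N$ (diagonal action) to $\Z G \tensor N'$, where $N'$ denotes $N$ with the trivial $G$-action. The target is flat over $\Z G$ because $A \tensor_{\Z G} (\Z G \tensor N') \iso A \tensor N$ for any right $\Z G$-module $A$, and this is exact in $A$ since $N$ is $\Z$-flat. Taking direct sums handles arbitrary free $\Z G$-modules. To pass to general flat $F_q$, I would invoke Lazard's theorem to write $F_q$ as a filtered colimit of finitely generated free $\Z G$-modules; since the diagonal tensor product with $N$ commutes with filtered colimits and since filtered colimits of flat modules are flat, this yields the flatness of $F_q \tensor N$ in general.

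With $F_\bullet \tensor N \to M \tensor N$ now a flat resolution of $M \tensor N$ as a diagonal $\Z G$-module, computing $\Tor^{\Z G}_*(\Z, M \tensor N)$ via this resolution gives
\[\HH_*(G; M \tensor N) \iso \HH_*\big((F_\bullet \tensor N)_G\big),\]
and the standard identification $(A \tensor B)_G \iso A \tensor_{\Z G} B$ (with $A$ converted to a right $G$-module in the usual way) identifies this termwise with $\HH_*(F_\bullet \tensor_G N)$, as desired. The main obstacle is the flatness claim for $F_q \tensor N$; the reduction via Lazard is where flatness of $F_q$ (rather than projectivity) gets used, and everything else is routine homological bookkeeping.
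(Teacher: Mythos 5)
Your argument is correct, but it is packaged differently from the paper, whose entire proof is a citation: it combines \cite[Proposition III.2.1]{BrownCohomology} (group homology is $\Tor_{\ast}^{\Z G}(\Z,-)$, and $\Tor$ may be computed from flat resolutions of either variable) with \cite[Proposition III.2.2]{BrownCohomology} (for $N$ torsion-free over $\Z$, one has $\Tor_{\ast}^{\Z G}(M,N) \iso \HH_{\ast}(G; M \tensor N)$ with the diagonal action), so the paper's route is to identify $\HH_{\ast}(G;M\tensor N)$ with $\Tor_{\ast}^{\Z G}(M,N)$ and then feed the given flat resolution of $M$ into the first variable. You instead manufacture a flat $\Z G$-resolution of the diagonal module $M \tensor N$ itself, by tensoring $F_\bullet$ with $N$ over $\Z$ and proving flatness of each $F_q \tensor N$ via the untwisting isomorphism $\Z G \tensor N \iso \Z G \tensor N'$ together with Lazard's theorem, and then compute $\Tor_{\ast}^{\Z G}(\Z, M\tensor N)$ from it, using $(F_q \tensor N)_G \iso F_q \tensor_G N$. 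This is essentially the standard proof of Brown's two propositions unwound in the special case at hand; each step you give (exactness after tensoring with the $\Z$-flat module $N$, equivariance of the untwisting map, flatness of induced modules, passage through filtered colimits, and the coinvariants identification) checks out. What your version buys is self-containedness; what it costs is the appeal to Lazard, which is genuinely needed here rather than decorative, since in the application the flat modules $\I_0^{\Q}$ and $\I_1^{\Q}$ produced by Lemma~\ref{lemma:recognizeflat} are direct sums of summands of $\Q[G]$ and are flat but not free, so one cannot restrict to free resolutions and skip that step.
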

\begin{proof}
This combines the statements of \cite[Proposition III.2.1]{BrownCohomology} and
\cite[Proposition III.2.2]{BrownCohomology}.
\end{proof}

To make Lemma~\ref{lemma:flatresolution} useful, we need a simple way of recognizing
flat $G$-modules.  Our second lemma is such a criterion.

\begin{lemma}
\label{lemma:recognizeflat}
Let $G$ be a group, let $X$ be a simplicial complex on which $G$ acts simplicially,
and let $Y$ be a subcomplex of $X$ which is preserved by the $G$-action.  For some $n \geq 0$,
assume that the setwise stabilizer subgroup $G_{\sigma}$ is finite 
for every $n$-simplex $\sigma$ of $X$ that is not contained in $Y$.  Then
the $G$-module $C_n(X,Y;\Q)$ of relative simplicial $n$-chains is flat.
\end{lemma}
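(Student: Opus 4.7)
The plan is to decompose the relative chain module $C_n(X,Y;\Q)$ as a direct sum of $\Q G$-modules induced from finite subgroups, and then verify that each such induced module is flat by appealing to Maschke's theorem.

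First, I would pick a set of representatives $\{\sigma_\alpha\}_{\alpha \in A}$ for the $G$-orbits on the collection of $n$-simplices of $X$ that are not contained in $Y$, and let $G_\alpha$ denote the setwise stabilizer of $\sigma_\alpha$, which by hypothesis is finite. The action of $G_\alpha$ permutes the two orientations of $\sigma_\alpha$, yielding a sign character $\epsilon_\alpha\colon G_\alpha \to \{\pm 1\}$; write $\Q_{\epsilon_\alpha}$ for the corresponding $1$-dimensional $\Q[G_\alpha]$-module. The standard description of simplicial chains as oriented simplices modulo the reversal relation then gives a natural isomorphism of $\Q G$-modules
\[C_n(X,Y;\Q) \iso \bigoplus_{\alpha \in A} \Ind_{G_\alpha}^G \Q_{\epsilon_\alpha}.\]

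Since a direct sum of flat modules is flat, it then suffices to show that each summand $\Ind_{G_\alpha}^G \Q_{\epsilon_\alpha} = \Q[G] \otimes_{\Q[G_\alpha]} \Q_{\epsilon_\alpha}$ is a flat $\Q[G]$-module. Because $G_\alpha$ is finite and $\Q$ has characteristic zero, Maschke's theorem makes $\Q[G_\alpha]$ semisimple, so $\Q_{\epsilon_\alpha}$ is a direct summand of some free module $\Q[G_\alpha]^k$. Induction preserves direct summands, so $\Ind_{G_\alpha}^G \Q_{\epsilon_\alpha}$ is a direct summand of $\Ind_{G_\alpha}^G \Q[G_\alpha]^k \iso \Q[G]^k$ and is therefore projective, hence flat.

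The only step that requires real care is setting up the direct-sum decomposition in the first paragraph so that an orientation-reversing stabilizer element acts as $-1$ on the corresponding chain, matching the twist by $\epsilon_\alpha$ on the induced-module side. Once that bookkeeping is handled, the flatness statement reduces to Maschke's theorem together with the identification $\Q[G] \otimes_{\Q[G_\alpha]} \Q[G_\alpha] \iso \Q[G]$. The hypothesis that the simplices in question lie outside $Y$ enters only to restrict which orbits appear in the sum, ensuring that every stabilizer that contributes is finite.
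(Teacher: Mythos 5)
Your proof is correct and follows essentially the same route as the paper: decompose $C_n(X,Y;\Q)$ into orbit summands $\Ind_{G_\alpha}^{G}\Q_{\epsilon_\alpha}$ and use semisimplicity of $\Q[G_\alpha]$ (Maschke) to exhibit each as a direct summand of a free $\Q[G]$-module. The only cosmetic difference is that you conclude projectivity over $\Q[G]$, whereas the paper phrases flatness over $\Z[G]$ (using that $\Q[G]$ is a localization of $\Z[G]$); since $\Q[G]$ is flat over $\Z[G]$, the two conclusions agree.
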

\begin{proof}
For an oriented $n$-simplex $\sigma$ of $X$ that is not contained in $Y$, let
$[\sigma]$ be the associated basis element of $C_n(X,Y;\Q)$.  Define
$M_{\sigma} \subset C_n(X,Y;\Q)$ to be the span of $\Set{$[g(\sigma)]$}{$g \in G$}$,
so $M_{\sigma}$ is a $G$-submodule of $C_n(X,Y;\Q)$.  As in the statement of the lemma,
$G_{\sigma}$ will denote the setwise stabilizer subgroup of $\sigma$. This subgroup
may reverse the orientation of $\sigma$.  Let $\Q_{\sigma}$ be the
$G_{\sigma}$-module whose underlying vector space is $\Q$ but where an
element of $G_{\sigma}$ acts by $\pm 1$ depending on whether or not it reverses
the orientation of $\sigma$.  We then have that
\[M_{\sigma} \cong \Ind_{G_{\sigma}}^{G} \Q_{\sigma}.\]
Since $\Q_{\sigma}$ is an irreducible representation of the finite group
$G_{\sigma}$, it is a direct summand of $\Q[G_{\sigma}]$.  It follows
that $M_{\sigma}$ is a direct summand of 
\[\Ind_{G_{\sigma}}^{G} \Q[G_{\sigma}] \cong \Q[G].\]
Since $\Q[G]$ is a localization of the free $G$-module $\Z[G]$, it is a flat $G$-module.
We deduce that $M_{\sigma}$ is a flat $G$-module.
Choosing representatives for the $G$-orbits of $n$-simplices of $X$
not lying in $Y$ determines an isomorphism 
\[C_n(X,Y;\Q) \iso \bigoplus_{\sigma \in (X^{(n)}-Y^{(n)})/G} M_{\sigma},\]
so $C_n(X,Y;\Q)$ is a flat $G$-module, as desired.
\end{proof}

\subsection{The proof of Theorem~\texorpdfstring{\ref{maintheorem:vanishing}}{A}}
\label{section:vanishingproof}

We now prove Theorem~\ref{maintheorem:vanishing}.  We begin by recalling its
statement.  Fixing some $\lambda \in \Z^n$ and some
$n \geq 3+\|\lambda\|$, this theorem asserts that
\[\HH^{\SLvcd-1}(\SL_n \Z;V_\lambda)=\HH^{\SLvcd-1}(\GL_n \Z;V_\lambda) = 0.\] 
Since $V_\lambda$ is a vector space over a field of characteristic $0$, the basic properties of the transfer map (see 
\cite[Chapter III.9]{BrownCohomology}) show that the vector space $\HH^{\SLvcd-1}(\GL_n \Z;V_\lambda)$ is a subspace
of $\HH^{\SLvcd-1}(\SL_n \Z;V_\lambda)$, so it is enough to deal with $\SL_n \Z$.  As we discussed
in the introduction, Borel--Serre \cite[Eq. (1)]{BorelSerreCorners} proved that there is an isomorphism
\[\HH^{\SLvcd-k}(\SL_n \Z;V_\lambda)\iso\HH_k(\SL_n \Z; \St_n \otimes V_\lambda) \qquad\quad (k \geq 0)\hskip-20pt.\] 

Let $\BA'_n$ be the subcomplex of $\BA_n$ introduced in \S\ref{section:bykovskiiproofsec}.  
Define $\I_0^{\Q} = C_{n-1}(\BA_n,\BA'_n;\Q)$ and $\I_1^{\Q} = C_n(\BA_n,\BA'_n;\Q)$, and set $\St_n^\Q = \St_n\otimes \Q$. Since $\St_n\otimes V_\lambda=\St_n^\Q\otimes V_\lambda$, our goal is to show that
\begin{equation}
\label{eqn:vanishinggoal}
\HH_1(\SL_n \Z;\St_n^\Q \otimes V_{\lambda}) = 0.
\end{equation}

Since $\Q$ is a flat $\Z$-module, it follows from the proof
of Theorem~\ref{maintheorem:bykovskii} in \S\ref{section:bykovskiiproofsec} that there 
is an exact sequence
\[\I_1^{\Q} \longrightarrow \I_0^{\Q} \longrightarrow \St_n^\Q \longrightarrow 0.\]
The $(n-1)$-simplices of $\BA_n$ that do not lie in $\BA'_n$ are the 
frames $\{\l{v_1},\ldots,\l{v_n}\}$ for $\Z^n$. The $\GL_n\Z$-stabilizer of such a frame is a finite group isomorphic to $S_n^{\pm}$, the $2^n\cdot n!$-element group of signed permutation matrices. The $\SL_n\Z$-stabilizer of each frame is thus a subgroup of this finite group, so Lemma~\ref{lemma:recognizeflat} shows that
$\I_0^{\Q}$ is a flat $\SL_n \Z$-module.  Similarly, the $n$-simplices
of $\BA_n$ that do not lie in $\BA'_n$ are the 
augmented frames
$\{\l{v_0},\ldots,\l{v_n}\}$ for $\Z^n$. The $\GL_n\Z$-stabilizer of an augmented frame is isomorphic to $D_6\times S_{n-2}^{\pm}$, where $D_6$ is the dihedral group of order 12, so the $\SL_n \Z$-stabilizer of an augmented frame is finite as well.  By Lemma~\ref{lemma:recognizeflat}, $\I_1^{\Q}$ is a flat $\SL_n \Z$-module as well. 

We may therefore extend this exact sequence to 
a flat resolution of the $\SL_n \Z$-module $\St_n^\Q$:
\begin{equation}
\label{eqn:flatresolution}
\cdots \longrightarrow F_3 \longrightarrow F_2 \longrightarrow \I_1^{\Q} \longrightarrow \I_0^{\Q} \longrightarrow \St_n^\Q \longrightarrow 0.
\end{equation}
Lemma~\ref{lemma:flatresolution} says that
$\HH_{\ast}(\SL_n \Z;\St_n^\Q \otimes V_{\lambda})$ is computed by the homology of the
chain complex
\[\cdots \longrightarrow F_3 \otimes_{\SL_n \Z} V_{\lambda} \longrightarrow F_2 \otimes_{\SL_n \Z} V_{\lambda} \longrightarrow \I_1^{\Q} \otimes_{\SL_n \Z} V_{\lambda} \longrightarrow \I_0^{\Q} \otimes_{\SL_n \Z} V_{\lambda} \longrightarrow 0.\]
To prove \eqref{eqn:vanishinggoal}, it is therefore enough to show that
$\I_1^{\Q} \otimes_{\SL_n \Z} V_{\lambda} = 0$ under our assumption that
$n\geq 3+\|\lambda\|$.  We remark that in our earlier paper \cite{CFPSteinberg} with Benson Farb, we  used a similar argument to show that $\I_0^{\Q} \otimes_{\SL_n \Z} V_{\lambda} = 0$ for $n\geq 2+\|\lambda\|$; see \cite[Theorem C]{CFPSteinberg}, which shows the vanishing of $\HH^{\binom{n}{2}}(\SL_n \Z;V_\lambda)$ and applies also to $\SL_n\mathcal{O}_K$ for many number rings $\mathcal{O}_K$.

Define the partition $\lambda' = (\lambda'_1,\ldots,\lambda'_{n-1},0)$ via the formula
$\lambda'_i = \lambda_i - \lambda_n$; observe that $\|\lambda'\|=\|\lambda\|$. 
As  $\GL_n\Q$-representations, we have 
$V_\lambda\iso V_{\lambda'} \tensor \det^{\tensor \lambda_n}$, so as a representation
of $\SL_n\Q$ or $\SL_n\Z$, the representation $V_{\lambda}$ is isomorphic to $V_{\lambda'}$. 
Let $V \coloneq V_{(1)}$ denote the standard $\SL_n \Z$-representation on $\Q^n$.
Using Schur--Weyl duality, we can embed 
$V_{\lambda'}$ as a direct summand of $V^{\otimes k}$ with $k = \|\lambda'\|=\|\lambda\|$.  It thus suffices to show that $\I_1^\Q\otimes_{\SL_n\Z}V^{\otimes k}=0$ when $n \geq 3+k$.\medskip

Fix an augmented frame $\sigma = \{\l{v_0},\ldots,\l{v_n}\}$ for $\Z^n$, and choose representatives so that
$\{v_1,\ldots,v_n\}$ is a basis for $\Z^n$ and $v_0 =v_1 + v_2$. Orienting $\sigma$ using the ordering on the $v_i$ determines a generator $[\sigma]$ of $\I_1^{\Q} \cong C_n(\BA_n,\BA'_n;\Q)$.
Moreover, fix arbitrary indices $i_1,\ldots,i_k\in \{1,\ldots,n\}$ and consider the element $w = v_{i_1} \otimes \cdots \otimes v_{i_k}\in V^{\otimes k}$.

We now prove that the image of $[\sigma] \otimes w$ in
$\I_1^\Q\otimes_{\SL_n\Z}V^{\otimes k}$ is $0$. We do this by constructing an element $\phi$ in the stabilizer of the augmented frame $\sigma$ that satisfies $\phi([\sigma]) = -[\sigma]$ and $\phi(w)=w$. 

Since $n\geq 3+k$, we can find some
$3 \leq j \leq n$ such that $j \notin \{i_1,\ldots,i_k\}$.  We consider two cases separately.
\begin{compactitem}
\item First, if we can find a second index $3\leq j'\leq n$ with $j\neq j'$ and $j'\notin \{i_1,\ldots,i_k\}$, we define $\phi \in \SL_n \Z$ by
\[\phi(v_i) = \begin{cases}
-v_{j'} & \text{if $i = j$},\\
v_j & \text{if $i = j'$},\\
v_i & \text{otherwise}\end{cases} \quad \quad \quad (1 \leq i \leq n).\]
Note that $\phi(v_0)=\phi(v_1+v_2)=v_1+v_2=v_0$, so the element $\phi$ preserves the augmented frame $\sigma$. Since $\phi$ exchanges the lines $\l{v_j}$ and $\l{v_{j'}}$, it reverses the orientation of the corresponding simplex, so $\phi([\sigma])=-[\sigma]$. By construction $\phi$ fixes all the vectors $v_{i_1}, \ldots,v_{i_k}$, so $\phi(w)=w$.
\item The second case is that no such $j'$ exists. Neither $1$ nor $2$ can belong to $\{i_1,\ldots,i_k\}$ since $n\geq 3+k$. 
In this case we define $\phi \in \SL_n \Z$ via the formula
\[\phi(v_i) = \begin{cases}
v_2 & \text{if $i=1$},\\
v_1 & \text{if $i=2$},\\
-v_j & \text{if $i=j$},\\
v_i & \text{otherwise}\end{cases} \quad \quad \quad (1 \leq i \leq n).\]
Note that $\phi(v_0)=\phi(v_1+v_2)=v_2+v_1=v_0$, so again $\phi$ preserves the augmented frame $\sigma$. Since $\phi$ exchanges the lines $\l{v_1}$ and $\l{v_2}$, we have $\phi([\sigma])=-[\sigma]$. Since neither $1$ nor $2$ nor $j$ belongs to $\{i_1,\ldots,i_k\}$, the vectors $v_{i_1}, \ldots,v_{i_k}$ are all fixed by $\phi$, so $\phi(w)=w$.
\end{compactitem}
In both cases, our chosen element satisfies $\phi([\sigma])=-[\sigma]$ and $\phi(w)=w$. It follows that the images of $[\sigma]\otimes w$ and $-[\sigma]\otimes w=\phi([\sigma]\otimes w)$ coincide in $\I_1^\Q\otimes_{\SL_n\Z}V^{\otimes k}$, and thus must be 0.

Since $\{v_1,\ldots,v_k\}$ is a basis for $\Z^n$, the tensors $v_{i_1}\otimes\cdots\otimes v_{i_k}$ constitute a basis for $V^{\otimes k}$. In other words, $w$ was an arbitrary basis element of $V^{\otimes k}$, so the vanishing of $[\sigma]\otimes w$ implies that 
the image of $[\sigma] \otimes V^{\otimes k}$ in
$\I_1^\Q\otimes_{\SL_n\Z}V^{\otimes k}$ vanishes. Since $[\sigma]$ was an arbitrary generator of $\I_1^\Q$, this implies that $\I_1^\Q\otimes_{\SL_n\Z}V^{\otimes k}=0$, as desired. This completes the proof of Theorem~\ref{maintheorem:vanishing}.

\section{The complex of partial augmented frames is CM}
\label{section:proofbasesacon}

The remainder of the paper is occupied with the proof of  Theorem~\ref{maintheorem:basesacon}, which asserts that the $n$-dimensional complex $\BA_n$ is Cohen--Macaulay (CM) of dimension $n$.

\subsection{Warmup: The complex of partial frames is CM}
\label{section:proofbasescon}

Recall from the introduction that $\B_n$ is the complex of partial frames of $\Z^n$.  In this section,
we will prove that $\B_n$ is CM of dimension $(n-1)$.  This theorem is similar to a result of Maazen \cite{MaazenThesis}
and we could deduce it from his work, but we include a proof since
it provides a simpler venue to preview the ideas that we will use in our proof of Theorem~\ref{maintheorem:basesacon}.
Moreover, we will use both this result and the details of its proof in multiple places during the proof of
Theorem~\ref{maintheorem:basesacon}.

During our proof, we will need to understand the links of various simplices of $\B_n$, so we make the following
definition.  Throughout this section, $\{e_1,\ldots,e_p\}$ will denote the standard basis for $\Z^p$; the context
will indicate what value of $p$ we are using at any particular point.

\begin{definition}
\label{def:Bnm}
For $n \geq 0$ and $m \geq 0$, let $\B_n^m$ be the subcomplex
$\Link_{\B_{m+n}}(\{\l{e_1},\ldots,\l{e_m}\})$ of $\B_{m+n}$.
\end{definition}

The main result of this section is the following theorem, which as we said above is closely
related to a theorem of Maazen \cite{MaazenThesis}. Of course, $\B_n^0$ is equal to $\B_n$, so this theorem proves that $\B_n$ is CM of dimension $n-1$, as claimed.

\begin{theorem}
\label{theorem:basescon}
For all $n \geq 0$ and $m\geq 0$, the complex $\PartialBases_n^m$ is CM of dimension $n-1$.
\end{theorem}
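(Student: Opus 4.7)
The plan is to induct on $n$ with $m$ arbitrary at each step. The base cases $n=0$ and $n=1$ are immediate by inspection: $\B_0^m$ is empty (since $\{\l{e_1},\ldots,\l{e_m}\}$ already forms a maximal partial frame of $\Z^m$), while $\B_1^m$ is a nonempty discrete vertex set, so each is CM of the required dimension. The link condition in the inductive step is also essentially immediate: for a $(k-1)$-simplex $\sigma=\{\l{v_1},\ldots,\l{v_k}\}$ of $\B_n^m$, the set $\{e_1,\ldots,e_m,v_1,\ldots,v_k\}$ is a partial basis of $\Z^{m+n}$, and any change of basis carrying it to $\{e_1,\ldots,e_{m+k}\}$ induces an isomorphism $\Link_{\B_n^m}(\sigma)\cong\B_{n-k}^{m+k}$, which is CM of dimension $n-k-1$ by the inductive hypothesis.

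The main obstacle is proving that $\B_n^m$ itself is $(n-2)$-connected. For this I plan to use a bad-simplex argument in the style of Maazen and van der Kallen. For a vertex $\l{v}\in\B_n^m$ with primitive representative $v=(a_1,\ldots,a_{m+n})$, define the \emph{height} to be $|a_{m+n}|$; call a simplex \emph{bad} if all its vertices have positive height, and \emph{good} otherwise, and let $Y\subset\B_n^m$ denote the full subcomplex of good simplices. The full subcomplex of $\B_n^m$ spanned by the height-$0$ vertices is canonically isomorphic (by forgetting the last coordinate) to $\B_{n-1}^m$, and hence $(n-3)$-connected by induction; a nerve/Mayer--Vietoris argument exploiting the link condition established above pushes this up to the $(n-2)$-connectedness of $Y$.

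To finish, I homotope any simplicial map $f\colon S^p\to\B_n^m$ with $p\leq n-2$ into $Y$ by iterating over bad simplices in the image of $f$ in decreasing order of maximum vertex height. Given such a $\tau$ with top-height vertex $\l{v}$, apply integral row operations---subtracting suitable $\Z$-combinations of the $e_i$ and of the remaining vertices of $\tau$---to replace $v$ with a strictly lower-height primitive vector $v'$ representing a vertex of $\Link_{\B_n^m}(\tau\setminus\{\l{v}\})\cong\B_{n-j+1}^{m+j-1}$, where $j$ denotes the number of vertices of $\tau$. The high connectivity of this link (provided by the inductive hypothesis) allows the swap to be realized as a homotopy supported on the relevant portion of $S^p$. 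The technical heart of the argument is to verify that this local modification strictly decreases an appropriately chosen complexity measure---typically the lexicographic ordering on the multiset of bad-simplex heights---so that the process terminates with the image of $f$ contained in $Y$, at which point the $(n-2)$-connectedness of $Y$ provides the desired extension.
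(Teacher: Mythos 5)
Your overall strategy is the right one and is essentially the paper's: induct on $n$, identify the link of a $(k-1)$-simplex with $\B_{n-k}^{m+k}$, measure vertices by the absolute value of the last coordinate, and push a sphere $S^p\to\B_n^m$ ($p\leq n-2$) down to height zero by integral row operations. The base cases, the dimension count, and the link condition are handled exactly as in the paper. But there is a genuine gap in the connectivity step. Whatever local modification you perform near a bad simplex, it must be realized by filling a sphere $\phi|_{\Link_{S^p}(\sigma)}$ inside the \emph{descending} part of $\Link_{\B_n^m}(\phi(\sigma))$ --- the full subcomplex spanned by vertices of height strictly less than the current maximum $R$ --- since otherwise the homotopy can reintroduce height-$R$ vertices and your complexity measure need not decrease. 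Your proposal appeals to ``the high connectivity of this link (provided by the inductive hypothesis),'' but the inductive hypothesis gives the connectivity of the \emph{full} link $\B_{n-j+1}^{m+j-1}$, not of this sublevel subcomplex, and nothing in your sketch supplies the latter. This is exactly the content of Lemma~\ref{lemma:bretract}: there is a simplicial retraction $\Link_{\B_n^m}(\sigma)\onto\Link_{\B_n^m}(\sigma)^{<N}$ sending $\l{v}$ to $\l{(v-\lfloor F(v)/N\rfloor w)}$ for a vertex $\l{w}$ of $\sigma$ with $F(w)=N$, so the sublevel complex is a retract of the full link and inherits its connectivity. Your ``row operations'' are morally this map, but applying them to a single vertex does not establish the connectivity of the target you need to fill into.

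Two smaller points. First, a single-vertex swap $\l{v}\mapsto\l{v'}$ is not well defined as a modification of $\phi$: the reduction of $v$ depends on the chosen bad simplex $\tau$, while a vertex $x$ of $S^p$ with $\phi(x)=\l{v}$ generally lies in many simplices of $S^p$, so replacing $\phi(x)$ must be compatible with all of $\phi(\Link_{S^p}(x))$, not just with $\tau\setminus\{\l{v}\}$. The paper sidesteps this by choosing $\sigma$ \emph{maximal} among simplices of $S^p$ all of whose vertices map to height $R$ (which forces $\phi(\Link_{S^p}(\sigma))$ into the sublevel link) and replacing $\phi|_{\Star_{S^p}(\sigma)}$ wholesale by $\phi|_{\partial\sigma}\ast\psi$ for a filling $\psi$ of $\phi|_{\Link_{S^p}(\sigma)}$. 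Second, your intermediate ``good subcomplex'' $Y$ and the nerve/Mayer--Vietoris step are unnecessary: once every vertex of the image has height $0$, the image lies in $\ker F$ and hence, by Remark~\ref{remark:summandofsummand}, in the star of $\l{e_{m+n}}$, so $\phi$ cones off directly.
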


We preface the proof of Theorem~\ref{theorem:basescon} with two lemmas.  Analogues of these
two lemmas will be at the heart of our
proof of the more difficult Theorem~\ref{maintheorem:basesacon} (and the second lemma here will also be used directly during that proof).

\begin{lemma}
\label{lemma:connectlinksb}
Consider $n \geq 0$ and $m \geq 0$.  For some $1 \leq k \leq n$, let $\sigma$ be a $(k-1)$-simplex of $\B_n^m$.  Then
the complex $\Link_{\B_n^m}(\sigma)$ is isomorphic to $\B_{n-k}^{m+k}$.
\end{lemma}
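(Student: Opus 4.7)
The plan is to construct an explicit simplicial isomorphism $\Link_{\B_n^m}(\sigma)\xrightarrow{\iso}\B_{n-k}^{m+k}$ via a change of basis on $\Z^{m+n}$. First, I would use the standard identity $\Link_{\Link_X(\alpha)}(\beta)=\Link_X(\alpha\cup\beta)$. Applied to the definition $\B_n^m=\Link_{\B_{m+n}}(\{\l{e_1},\ldots,\l{e_m}\})$, this gives
\[
\Link_{\B_n^m}(\sigma)=\Link_{\B_{m+n}}\bigl(\{\l{e_1},\ldots,\l{e_m}\}\cup\sigma\bigr),
\]
while by definition $\B_{n-k}^{m+k}=\Link_{\B_{m+n}}(\{\l{e_1},\ldots,\l{e_{m+k}}\})$. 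Since $\GL_{m+n}\Z$ acts on $\B_{m+n}$ by simplicial automorphisms that permute the lines in $\Z^{m+n}$, it would suffice to produce an element $\phi\in\GL_{m+n}\Z$ that fixes each line $\l{e_i}$ for $1\leq i\leq m$ and sends the lines of $\sigma$ bijectively to $\l{e_{m+1}},\ldots,\l{e_{m+k}}$; its induced automorphism of $\B_{m+n}$ will then carry the first link isomorphically onto the second.

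Writing $\sigma=\{\l{w_1},\ldots,\l{w_k}\}$, the definition of a partial frame says that after replacing each $w_j$ by $-w_j$ if necessary (which does not change the line $\l{w_j}$), the vectors $e_1,\ldots,e_m,w_1,\ldots,w_k$ form a basis of some direct summand $B$ of $\Z^{m+n}$. I would then choose any basis $u_1,\ldots,u_{n-k}$ of a complementary direct summand, so that the concatenated tuple $(e_1,\ldots,e_m,w_1,\ldots,w_k,u_1,\ldots,u_{n-k})$ is a basis of $\Z^{m+n}$. The element $\phi\in\GL_{m+n}\Z$ defined by sending this basis to the standard basis via $e_i\mapsto e_i$ for $i\leq m$, $w_j\mapsto e_{m+j}$ for $1\leq j\leq k$, and $u_j\mapsto e_{m+k+j}$ for $1\leq j\leq n-k$ has all the required properties.

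There is no substantive obstacle here. The only input beyond bookkeeping is the defining property of a partial frame as a frame for a direct summand, which simultaneously guarantees that $e_1,\ldots,e_m,w_1,\ldots,w_k$ extends to a basis of $\Z^{m+n}$ and ensures that the resulting change-of-basis map lies in $\GL_{m+n}\Z$ rather than merely in $\GL_{m+n}\Q$.
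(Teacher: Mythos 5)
Your proposal is correct and is essentially the paper's own argument: the paper likewise extends $\{e_1,\ldots,e_m\}\cup\{$representatives of $\sigma\}$ to a basis of $\Z^{m+n}$ and uses the resulting element of $\GL_{m+n}\Z$, which fixes each $\l{e_i}$ and sends $\sigma$ to $\{\l{e_{m+1}},\ldots,\l{e_{m+k}}\}$, to carry $\Link_{\B_n^m}(\sigma)$ onto $\B_{n-k}^{m+k}$. Your explicit use of the identity $\Link_{\Link_X(\alpha)}(\beta)=\Link_X(\alpha\cup\beta)$ and of a complementary summand just spells out steps the paper leaves implicit.
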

\begin{proof}
Write $\sigma = \{\l{v_1},\ldots,\l{v_k}\}$, so $\{e_1,\ldots,e_m,v_1,\ldots,v_k\}$ is a basis for a direct summand
of $\Z^{m+n}$.  Extend this to a basis $\{e_1,\ldots,e_m,v_1,\ldots,v_n\}$ for $\Z^{m+n}$.  Define
$\phi \in \GL_{m+n}(\Z)$ by the formulas $\phi(e_i) = e_i$ for $1 \leq i \leq m$ and $\phi(v_j) = e_{m+j}$ for
$1 \leq j \leq n$.  Then $\phi$ induces an automorphism of $\B_n^m$ that takes $\Link_{\B_n^m}(\sigma)$
to $\B_{n-k}^{m+k} \subset \B_n^m$.
\end{proof}

\begin{definition}
Consider $n \geq 0$ and $m \geq 0$.  Assume that some linear map $F\colon \Z^{m+n} \rightarrow \Z$ has been
fixed. Given a subcomplex $X$ of $\B_n^m$ and given $N>0$, we define $X^{<N}$ to be the
full subcomplex of $X$ spanned by the set of vertices $\l{v}$ of $X$ satisfying
$\abs{F(v)} < N$.  This condition is well-defined since $\abs{F(v)} = \abs{F(-v)}$.
\end{definition}

\begin{lemma}
\label{lemma:bretract} 
Consider $n \geq 0$ and $m \geq 0$.  Let $F\colon \Z^{m+n} \rightarrow \Z$ be a fixed linear map and let $N > 0$.  Let
$\sigma$ be a simplex of $\B_n^m$ such that some vertex $\l{w}$ of $\sigma$ satisfies $F(w) = N$.  Then
there exists a simplicial retraction $\pi\colon \Link_{\B_n^m}(\sigma) \onto \Link_{\B_n^m}(\sigma)^{<N}$.
\end{lemma}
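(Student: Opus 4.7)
The plan is to construct $\pi$ by pushing each vertex into the target subcomplex via an elementary shift by a multiple of $w$. Since $F(w) = N$, I can reduce $|F(v)|$ modulo $N$ by subtracting an appropriate multiple of $w$ from $v$; and since $\l{w}$ lies in $\sigma$, the shift $v \mapsto v - qw$ is an elementary column operation in any basis containing $w$, so it preserves the partial-frame structure.

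First, I would define $\pi$ at the vertex level. For each primitive vector $v$ representing a vertex of $\Link_{\B_n^m}(\sigma)$, I would choose an integer $q(v)$ so that $F(v - q(v)w) = F(v) - q(v)N$ has absolute value strictly less than $N$. A convenient antisymmetric convention is to require $F(v) - q(v)N \in [0,N)$ when $F(v) \geq 0$ and $F(v) - q(v)N \in (-N,0]$ when $F(v) < 0$; a direct check then gives $q(-v) = -q(v)$, so that $\l{v - q(v)w}$ depends only on the line $\l{v}$ and not on the chosen representative. Moreover $q(v) = 0$ precisely when $|F(v)| < N$, so setting $\pi(\l{v}) \coloneq \l{v - q(v)w}$ produces a function on vertices that fixes every vertex of $\Link_{\B_n^m}(\sigma)^{<N}$.

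Next, I would check that $\pi$ extends to a simplicial map. For any simplex $\tau = \{\l{v_1},\ldots,\l{v_k}\}$ of $\Link_{\B_n^m}(\sigma)$, the set $\{e_1,\ldots,e_m\} \cup \sigma \cup \tau$ is a basis for a direct summand of $\Z^{m+n}$. The transformation sending each $v_i \mapsto v_i - q(v_i)w$ and fixing every other basis vector lies in $\SL$, so it carries this basis to $\{e_1,\ldots,e_m\} \cup \sigma \cup \pi(\tau)$, a basis of the same direct summand. This simultaneously verifies that each $v_i - q(v_i)w$ is primitive, that the lines $\pi(\l{v_i})$ are pairwise distinct, and that $\pi(\tau)$ is a partial frame forming a simplex of $\Link_{\B_n^m}(\sigma)$. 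By construction each image vertex satisfies $|F(\cdot)| < N$, so $\pi(\tau)$ lies in $\Link_{\B_n^m}(\sigma)^{<N}$.

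The only subtle step is the first one: choosing $q(v)$ so that $\pi$ is well-defined on lines. Everything else, including both the retraction property and the simpliciality of $\pi$, falls out of the observation that $v \mapsto v - q(v)w$ is an elementary operation within a basis that already contains $w$, hence automatically preserves the partial-frame condition that defines simplices of $\Link_{\B_n^m}(\sigma)$.
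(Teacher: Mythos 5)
Your proposal is correct and follows essentially the same route as the paper: push each vertex $\l{v}$ to $\l{(v-q w)}$ with $q$ chosen so the image has $\abs{F}<N$, check well-definedness on lines, and verify simpliciality by observing that subtracting multiples of $w$ (a member of the ambient basis through $\sigma$) carries the basis of the relevant direct summand to another basis of the same summand. Your antisymmetric convention $q(-v)=-q(v)$ is just a cosmetic variant of the paper's choice of the $F$-nonnegative representative.
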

\begin{proof}
Define $X = \Link_{\B_n^m}(\sigma)$ and write $\sigma = \{\l{w_1},\ldots,\l{w_p}\}$ with $w_1=w$. Our goal is to construct a simplicial retraction $X\onto X^{<N}$.  Say
that $v \in \Z^{m+n}$ is \emph{$F$-nonnegative} if $F(v) \geq 0$.
We begin by defining a map $\widehat{\pi}\colon X^{(0)} \rightarrow (X^{<N})^{(0)}$ on $0$-simplices as follows.
\begin{compactitem}
\item Consider a vertex $\l{v}$ of $X$.  Replacing $v$ with $-v$ if necessary, we can assume that
$v$ is $F$-nonnegative.  Define $q_v \in \N$ to be the result $\lfloor \frac{F(v)}{N} \rfloor$
of dividing $F(v)$ by $N$, so $0 \leq F(v) - q_v N < N$.  We then set
\begin{equation}
\label{eq:pihat}
\widehat{\pi}(\l{v}) = \l{(v - q_v w)}.
\end{equation}
This is well-defined; the only possible ambiguity occurs when $F(v) = 0$ and hence both
$v$ and $-v$ are $F$-nonnegative, but in that case we have $q_v=q_{-v}=0$ so $\widehat{\pi}(\l{v}) = \l{v}$ no
matter what choice we make.
\end{compactitem}
By definition,
$\widehat{\pi}(\l{v}) = \l{v}$ if $\l{v}$ is a vertex of $X^{<N}$, and similarly $\widehat{\pi}(\l{v}) \in X^{<N}$ for any vertex $\l{v}$ of $X$. To complete the proof, we must prove that
$\widehat{\pi}$ extends over the higher-dimensional simplices of $X$.  Consider a $(k-1)$-simplex
$\{\l{v_1},\ldots,\l{v_k}\}$ of $X$, so $\{e_1,\ldots,e_m,w_1,\ldots,w_p,v_1,\ldots,v_k\}$ is a 
basis for a rank-$(m+p+k)$
direct summand $U$ of $\Z^n$.  Replace  the $v_i$ by $-v_i$ if necessary to ensure that the $v_i$ are $F$-nonnegative and set $v'_i = v_i - q_{v_i} w_1$,
so $\widehat{\pi}(\l{v_i}) = \l{(v'_i)}$.  Since each $v'_i$ is obtained by adding some multiple of $w_1$ to $v_i$,
the set $\{e_1,\ldots,e_m,w_1,\ldots,w_p,v'_1,\ldots,v_k'\}$ 
is also a basis for $U$.
We conclude
that $\{\widehat{\pi}(\l{v_1}),\ldots,\widehat{\pi}(\l{v_k})\}$ is a $(k-1)$-simplex of $X^{<N}$, as desired.
\end{proof}

\begin{remark}
\label{remark:summandofsummand}
If a partial frame $\sigma$ of $\Z^\ell$ is contained in a summand $V$ of $\Z^\ell$, then in fact $\sigma$ is a partial frame of $V$. Although this fact may seem obvious, it need not hold over other rings and its failure can lead to great difficulty. For example, over the ring $A=\mathbb{R}[x,y,z]/(x^2+y^2+z^2-1)$,
the vector $v=xe_1+ye_2+ze_3$ is part of a basis for $A^4$ and has $e_4$-coordinate 0, but $v$ is \emph{not} part of any basis containing the vector $e_4$. 
Nevertheless, over $\Z$ the claim follows from the fact that if a summand $U$ of $\Z^\ell$ is contained in another summand $V$ of $\Z^\ell$, then $U$ is a summand of $V$; this property holds not only for $\Z$ but for any Dedekind domain. For the same reason, a partial augmented frame of $\Z^\ell$ that is contained in a summand $V$ of $\Z^\ell$ is in fact a partial augmented frame of $V$; this will be used in the next section in the proof of Theorem~\ref{maintheorem:basesacon}.
\end{remark}

We now come to the proof of Theorem~\ref{theorem:basescon}.  This proof could be written in the language
of combinatorial Morse theory without great difficulty, but it would be much more awkward to express our later proof of Theorem
\ref{maintheorem:basesacon} in this language (as we illustrate afterwards in Remark~\ref{remark:PLMorsedifficulty}).  Since our goal is to motivate the proof of
Theorem~\ref{maintheorem:basesacon}, we follow its structure here.

\begin{proof}[Proof of Theorem~\ref{theorem:basescon}]
We prove the theorem by induction on $n$.  For the base case $n=0$, we must prove for all $m \geq 0$
that $\B_0^m$ is CM of dimension $-1$, i.e.\ that the simplicial complex  $\B_0^m$ is empty.  Since $\{\l{e_1},\ldots,\l{e_m}\}$ is already a 
frame for $\Z^{m}$, it is a maximal simplex of $\B_m$. Therefore its link $\B_0^m$ is empty, as desired.

Now fix $n>0$ and $m \geq 0$ and assume that $\B_{n'}^{m'}$ is CM of dimension $n'-1$ for all $n'<n$ and all $m'\geq 0$. 
Since every frame for $\Z^{m+n}$ consists of $m+n$ lines, the complex $\PartialBases_n^m$ is $(n-1)$-dimensional.
Lemma~\ref{lemma:connectlinksb} and our induction hypothesis implies that for all $(k-1)$-simplices $\sigma$
of $\B_n^m$ with $1 \leq k \leq n$, the complex $\Link_{\B_n^m}(\sigma)$ is CM of dimension $n-k$.  All
that remains to show is that $\B_n^m$ is $(n-2)$-connected.

Fix $0 \leq p \leq n-2$, let $S^p$ be a combinatorial triangulation of a $p$-sphere, and let 
$\phi\colon S^p \rightarrow \B_{n}^{m}$ be a simplicial map.  Our goal is to show that $\phi$ 
can be homotoped to a constant map.
Let $F\colon \Z^{m+n} \onto \Z$ be the linear map taking $v \in \Z^{m+n}$ to the $e_{m+n}$-coordinate of $v$.
For a vertex $\l{v}$ of $\B_n^m$, define $\rank(\l{v}) = \abs{F(v)}$; this is well-defined since
$\abs{F(v)} = \abs{F(-v)}$.  We then define
\[R(\phi) = \Max \Set{$\rank(\phi(x))$}{$x$ a vertex of $S^p$}.\]
This will be our measure of complexity for $\phi$.

If $R(\phi)=0$, then every simplex $\sigma$ of $\phi(S^p)$ is contained in the summand $\ker F$ of $\Z^{m+n}$. In particular, $\{\l{e_1},\ldots,\l{e_m}\}\ast\sigma$ is a partial frame contained in $\ker F$; by Remark~\ref{remark:summandofsummand}, it is in fact a partial frame for $\ker F$, so it can be extended to a partial frame for $\Z^{n+m}$ by adding the line $\l{e_{m+n}}$. In other words, the entire image $\phi(S^p)$ is contained in the star (indeed, in the link) of $\l{e_{m+n}}$. We conclude that when $R(\phi)=0$, the desired null-homotopy is obtained by homotoping $\phi$ to the constant map at the vertex $\l{e_{m+n}}$.

We can therefore assume that $R(\phi)=R>0$; we want to homotope $\phi$ so as to reduce $R(\phi)$.
Consider the following condition on a simplex $\sigma$ of $S^p$:
\begin{equation}
\label{eq:Bcond}
\rank(\phi(x)) = R\text{ for all vertices }x\text{ of }\sigma.
\end{equation}
Since $R(\phi) = R$, there must be some simplex $\sigma$ of $S^p$ satisfying \eqref{eq:Bcond}.  We can therefore
choose a simplex $\sigma$ of $S^p$ satisfying \eqref{eq:Bcond} whose dimension $k$ is maximal among those satisfying
\eqref{eq:Bcond}.  This maximality implies that $\phi$ takes $\Link_{S^p}(\sigma)$ to
$\Link_{\B_n^m}(\phi(\sigma))^{<R}$.  

Let $\ell$ be the dimension
of the simplex $\phi(\sigma)$; we certainly have $\ell \leq k$, but we might have $\ell < k$ if $\phi$ restricted to
$\sigma$ is not injective.  Combining Lemma~\ref{lemma:connectlinksb} with our induction
hypothesis, we see that $\Link_{\B_n^m}(\phi(\sigma))$ is CM of dimension $(n-\ell-2)$,
and in particular is $(n-\ell-3)$-connected. This retracts to $\Link_{\B_n^m}(\phi(\sigma))^{<R}$ by
Lemma~\ref{lemma:bretract}, so its retract $\Link_{\B_n^m}(\phi(\sigma))^{<R}$ is also $(n-\ell-3)$-connected.

By the definition of a combinatorial triangulation, the link
$\Link_{S^p}(\sigma)$ is a combinatorial $(p-k-1)$-sphere.    Since $p \leq n-2$ and $\ell \leq k$, we have $p-k-1 \leq n-\ell-3$, so $\phi|_{\Link_{S^p}(\sigma)}$ is null-homotopic via a homotopy inside $\Link_{\B_n^m}(\phi(\sigma))^{<R}$.
Using Zeeman's relative simplicial approximation theorem \cite{Zeeman}, we
conclude that there exists a combinatorial $(p-k)$-ball $B$ with $\partial B \iso \Link_{S^p}(\sigma)$
and a simplicial map $\psi\colon B \rightarrow \Link_{\B_n^m}(\phi(\sigma))^{<R}$ such that 
$\psi|_{\partial B} = \phi|_{\Link_{S^p}(\sigma)}$.

The map $\psi$ extends to the $(p+1)$-ball $\sigma\ast B$ as $(\phi|_\sigma) \ast \psi \colon \sigma\ast B\to \B_n^m$. 
The boundary of $\sigma\ast B$ is the union of the $p$-ball $\sigma\ast(\partial B)=\Star_{S^p}(\sigma)$, on which 
$\phi|_\sigma\ast \psi=\phi|_{\Star_{S^p}(\sigma)}$, and the $p$-ball $(\partial \sigma)\ast B$.  We can thus homotope 
$\phi$ across this $(p+1)$-ball to replace $\phi|_{\Star_{S^p}(\sigma)}$ with 
$\phi|_{\partial \sigma}\ast\psi\colon (\partial \sigma)\ast B\to \B_n^m$. 

The key property of this modification is that it eliminates the simplex $\sigma$ and does not add any 
other simplices satisfying \eqref{eq:Bcond}. Indeed, every new simplex is the join of a simplex in $\partial\sigma$ with a nonempty simplex in $B$; since $\psi(B)$ is contained in $\Link_{\B_n^m}(\phi(\sigma))^{<R}$, such a simplex has at least one vertex with $r(\phi(x))<R$, so it will not satisfy \eqref{eq:Bcond}.
Repeating this process, we can homotope $\phi$ to eliminate \emph{all} simplices satisfying 
\eqref{eq:Bcond}; in other words, we can homotope $\phi$ so that $R(\phi)<R$.

By induction, we can homotope $\phi$ so that $R(\phi)=0$. At this point, as explained above, $\phi$ can be directly contracted to a constant map, so this concludes the proof that $\B_n^m$ is $(n-2)$-connected.
\end{proof}

\subsection{The complex \texorpdfstring{$\BA_n^m$}{BA(n,m)}}
\label{section:introducebanm}

We now turn to the proof of Theorem~\ref{maintheorem:basesacon}, which asserts that the complex
$\BA_n$ is CM of dimension $n$.  
Just as for $\B_n$, we will need to understand links of simplices in $\BA_n$.  However, for
technical reasons the heart of our argument will deal not with the entire link, but rather with the following subcomplex of the link.
Recall that $\{e_1,\ldots,e_p\}$ denotes the standard basis for $\Z^p$, where $p \geq 1$ is
determined by context.

\begin{definition}
\label{def:BAmn}
For $n \geq 1$ and $m \geq 0$ with $m+n\geq 2$, define $\BA_n^m$ to be the full subcomplex of
 $\Link_{\BA_{m+n}}(\{\l{e_1},\ldots,\l{e_m}\})$ spanned by vertices $\l{v}$ of
$\Link_{\BA_{m+n}}(\{\l{e_1},\ldots,\l{e_m}\})$ such that $v \notin \Span_{\Z}(e_1,\ldots,e_m) \subset \Z^{m+n}$.
\end{definition}

For example, even though $\{\l{e_1},\l{e_2},\l{(e_1+e_2)}\}$ is a simplex
of $\BA_{n+2}$, the vertex $\l{(e_1+e_2)}$ is excluded from $\BA_n^2$.
Our main theorem is then as follows.  It reduces to Theorem~\ref{maintheorem:basesacon} when $m=0$.

\begin{theorem-prime}{maintheorem:basesacon}
\label{theorem:basesacon}
For $n \geq 1$ and $m \geq 0$ with $m+n\geq 2$, the complex $\BA_n^m$ is CM of dimension~$n$.
\end{theorem-prime}

\begin{remark}
We have intentionally refrained from defining $\BA_n^m$ in the case when $n=0$ or the case when $m+n<2$. The reason is that $\BA_n^m$ would be degenerate in these cases; not only would Theorem~\ref{theorem:basesacon} be false in these cases, $\BA_n^m$ would not even be $n$-dimensional.
\end{remark}

We will prove Theorem~\ref{theorem:basesacon} in \S\ref{section:basesacon2proof}.  This is
preceded by \S\ref{section:basesacon2links}, which describes the links in $\BA_n^m$ (or certain subcomplexes of the links) and establishes the base case for our induction, and by \S\ref{section:basesacon2retract}, which constructs certain
retractions on links in $\BA_n^m$.  Before we start
with all of this, we close this section by introducing some terminology for simplices of $\BA_n^m$.

\begin{definition}
\label{def:trichotomy}
Fix $n \geq 1$ and $m \geq 0$ with $m+n\geq 2$.
We divide the simplices of $\BA_n^m$ into three mutually exclusive types.
\begin{compactitem}
\item A \emph{standard simplex} is a simplex $\{\l{v_1},\ldots,\l{v_p}\}$ such that
$\{\l{e_1},\ldots,\l{e_m},\l{v_1},\ldots,\l{v_p}\}$ is a simplex of $\B_n^m$.  In other
words, $\{e_1,\ldots,e_m,v_1,\ldots,v_p\}$ is a basis for a direct summand of $\Z^{m+n}$.
\item An \emph{internally additive simplex} is a simplex that can be written as
$\{\l{v_0},\ldots,\l{v_p}\}$, where $\{\l{v_1},\ldots,\l{v_p}\}$ is a standard simplex
and $\pm v_0 \pm v_1 \pm v_2 = 0$ for some choice of signs.  We will call $\{\l{v_0}, \l{v_1}, \l{v_2}\}$
the \emph{additive core} of our simplex; this subset is well-defined since $\{v_0,v_1,v_2\}$ is the minimal
 linearly dependent subset of $\{v_0,\ldots,v_p\}$.
\item An \emph{externally additive simplex} is a simplex that can be written as
$\{\l{v_0},\ldots,\l{v_p}\}$, where $\{\l{v_1},\ldots,\l{v_p}\}$ is a standard simplex
and $\pm v_0 \pm v_1 \pm e_i = 0$ for some choice of signs and some $1 \leq i \leq m$.
We will call $\{\l{v_0},\l{v_1}\}$ the \emph{additive core} of our simplex; it is well-defined
just as for internally additive simplices.
\end{compactitem}
An \emph{additive simplex} is a simplex which is either internally or externally additive.
\end{definition}
\begin{remark}
We emphasize that the classification in Definition~\ref{def:trichotomy} applies to a simplex \emph{as a simplex of $\BA_n^m$}. The same collection of lines might be classified differently as a simplex of $\BA_{n'}^{m'}$. For example, a partial frame that forms an externally additive simplex of $\BA_n^m$ would be a standard simplex when considered as a simplex of $\BA_{n+m}^{0}$.
\end{remark}

\subsection{Describing links in \texorpdfstring{$\BA_n^m$}{BA(n,m)}}
\label{section:basesacon2links}

%

In this section, we describe the links of simplices in $\BA_n^m$, as we did for $\B_n^m$ in Lemma~\ref{lemma:connectlinksb}. To handle the link of a standard simplex, we are forced to deal with a certain subcomplex of the link (just as  $\BA_n^m$ is a \emph{subcomplex} of the full link in $\BA_{m+n}$); the reason is that the retraction constructed in Proposition~\ref{prop:baretract2} below cannot be extended across the entire link.

\begin{definition}
\label{def:PLink}
Given a standard simplex $\sigma = \{\l{v_1},\ldots,\l{v_p}\}$ of $\BA_n^m$, define
$\PLink_{\BA_n^m}(\sigma)$ to be the full subcomplex of $\Link_{\BA_n^m}(\sigma)$ spanned
by vertices $\l{v}$ of $\Link_{\BA_n^m}(\sigma)$ such that 
$v \notin \Span_{\Z}(e_1,\ldots,e_m,v_1,\ldots,v_p) \subset \Z^{m+n}$.
\end{definition}

\begin{lemma}
\label{lemma:connectlinksba}
Consider $n \geq 1$ and $m \geq 0$ with $m+n\geq 2$.  For some $1 \leq k \leq n+1$, let $\sigma$ be a $(k-1)$-simplex of $\BA_n^m$.\begin{compactenum}[label={\normalfont (\alph*)},ref={(\alph*)}]
\item\label{part:a} If $\sigma$ is an additive simplex, then $\Link_{\BA_n^m}(\sigma)$ is isomorphic to $\B_{n-k+1}^{m+k-1}$.
\item\label{part:b} If $\sigma$ is a standard simplex and $k\neq n$, then $\PLink_{\BA_n^m}(\sigma)$ is isomorphic to
$\BA_{n-k}^{m+k}$.
\item\label{part:c} If $\sigma$ is a standard simplex, define $X = \Link_{\BA_n^m}(\sigma)$ and $\widehat{X} = \PLink_{\BA_n^m}(\sigma)$. 
Then for all vertices $\l{v}$ of $X$ that do not lie in $\widehat{X}$, the complex
$\Link_X(\l{v})$ lies in $\widehat{X}$ and is isomorphic to $\B_{n-k}^{m+k}$. 
\end{compactenum}
\end{lemma}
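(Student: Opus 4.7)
The plan is to address the three parts in order, with part~\ref{part:c} deriving directly from part~\ref{part:a}. The guiding principle is that an augmented frame admits exactly one additive relation, so once an additive relation has been ``used up'' by $\sigma$, any extension must be standard. For part~\ref{part:b} the mechanism is different: a $\GL_{m+n}(\Z)$-change of basis moves $\sigma$ into a standard position, reducing the problem to recognizing $\BA_{n-k}^{m+k}$ directly.

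For part~\ref{part:a}, suppose $\sigma$ is additive of dimension $k-1$, and write $\sigma = \{\l{v_0},\l{v_1},\ldots,\l{v_{k-1}}\}$ so that $\{\l{v_1},\ldots,\l{v_{k-1}}\}$ is the standard part and $v_0$ is determined (up to sign) by the additive relation, whether internal or external. In either case $v_0 \in \Span_\Z(e_1,\ldots,e_m,v_1,\ldots,v_{k-1})$, so this span equals $\Span_\Z(e_1,\ldots,e_m,v_0,v_1,\ldots,v_{k-1})$ and is a rank $(m+k-1)$ summand. Because an augmented frame admits only one additive relation, any $\tau \in \Link_{\BA_n^m}(\sigma)$ cannot introduce a new one, so $\{\l{e_1},\ldots,\l{e_m},\l{v_1},\ldots,\l{v_{k-1}}\} \cup \tau$ is a partial frame of $\Z^{m+n}$; conversely, any such $\tau$ can be recombined with $\l{v_0}$ (which already lies in the same summand and satisfies the original additive relation) to yield a partial augmented frame. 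This gives a canonical isomorphism
\[ \Link_{\BA_n^m}(\sigma) \iso \Link_{\B_n^m}(\{\l{v_1},\ldots,\l{v_{k-1}}\}), \]
and Lemma~\ref{lemma:connectlinksb} identifies the right-hand side with $\B_{n-k+1}^{m+k-1}$.

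For part~\ref{part:b}, with $\sigma = \{\l{v_1},\ldots,\l{v_k}\}$ standard, extend $\{e_1,\ldots,e_m,v_1,\ldots,v_k\}$ to a basis $\{e_1,\ldots,e_m,v_1,\ldots,v_n\}$ for $\Z^{m+n}$ and let $\phi \in \GL_{m+n}(\Z)$ fix each $e_i$ and send $v_j \mapsto e_{m+j}$. Then $\phi$ sends $\sigma$ to $\{\l{e_{m+1}},\ldots,\l{e_{m+k}}\}$, while the defining vertex condition $w \notin \Span_\Z(e_1,\ldots,e_m,v_1,\ldots,v_k)$ for $\widehat{X}$ becomes $\phi(w) \notin \Span_\Z(e_1,\ldots,e_{m+k})$, exactly the vertex condition for $\BA_{n-k}^{m+k}$ inside $\Z^{m+n} = \Z^{(m+k)+(n-k)}$. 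Since $\phi$ preserves the partial-augmented-frame condition, the isomorphism $\widehat{X} \iso \BA_{n-k}^{m+k}$ follows. The hypothesis $k \neq n$ (combined with $k \leq n$ for a standard simplex) ensures $n-k \geq 1$ so that $\BA_{n-k}^{m+k}$ is defined.

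For part~\ref{part:c}, let $\l{v}$ be a vertex of $X$ not in $\widehat{X}$, so $v \in \Span_\Z(e_1,\ldots,e_m,v_1,\ldots,v_k)$ while $v \notin \Span_\Z(e_1,\ldots,e_m)$. Then $\{\l{e_1},\ldots,\l{e_m},\l{v_1},\ldots,\l{v_k},\l{v}\}$ consists of $m+k+1$ lines inside a rank $m+k$ summand and so must be an augmented frame rather than a frame, making $\sigma \cup \{\l{v}\}$ an additive $k$-simplex of $\BA_n^m$. Applying part~\ref{part:a} to $\sigma \cup \{\l{v}\}$ gives
\[ \Link_X(\l{v}) = \Link_{\BA_n^m}(\sigma \cup \{\l{v}\}) \iso \B_{n-k}^{m+k}. \]
For the containment $\Link_X(\l{v}) \subseteq \widehat{X}$: if some vertex $\l{w}$ of $\Link_X(\l{v})$ satisfied $w \in \Span_\Z(e_1,\ldots,e_m,v_1,\ldots,v_k)$, then $\{\l{e_1},\ldots,\l{e_m}\} \cup \sigma \cup \{\l{v},\l{w}\}$ would place $m+k+2$ lines in a rank $m+k$ summand, forcing two independent linear dependencies---impossible in an augmented frame.

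The main obstacle is the bookkeeping in part~\ref{part:b}: the change of basis $\phi$ generally shuffles the classification of simplices (an externally additive simplex of $\widehat{X}$ whose relation involves some $v_j$ becomes externally additive in $\BA_{n-k}^{m+k}$ with respect to the new anchor $e_{m+j}$, while an internally additive simplex involving $v_j$ likewise becomes externally additive). The reason this reshuffling is harmless is that the claimed isomorphism is of simplicial complexes, not of typed complexes: what one must check is preserved is only the condition of \emph{being} a simplex of $\BA$, which is an intrinsic property of the underlying partial-augmented-frame structure and is manifestly preserved by the $\GL_{m+n}(\Z)$-action.
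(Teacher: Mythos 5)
Your proof is correct and takes essentially the same route as the paper's (which compresses the whole argument into two sentences): part~\ref{part:a} rests on the fact that a partial augmented frame carries a unique additive relation, so the link of an additive simplex reduces to a link in $\B_n^m$ handled by Lemma~\ref{lemma:connectlinksb}; part~\ref{part:b} follows from a $\GL_{m+n}(\Z)$ change of basis putting $\sigma$ in standard position; and part~\ref{part:c} is deduced from part~\ref{part:a}. The details you supply --- in particular that a vertex $\l{w}$ of $\Link_X(\l{v})$ lying in $\Span_\Z(e_1,\ldots,e_m,v_1,\ldots,v_k)$ would force two independent dependencies, and that the type of a simplex (standard/internally/externally additive) is extrinsic and need not be preserved by the identification in part~\ref{part:b} --- are exactly the verifications the paper leaves implicit.
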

\begin{proof}
Parts~\ref{part:a} and \ref{part:b} are proved exactly like Lemma~\ref{lemma:connectlinksb} since by an appropriate automorphism of $\BA_n^m$, we may assume that $\sigma=\{\l{e_{m+1}},\ldots,\l{e_{m+k-1}},\l{(e_{m+1}+e_{m+2})}\}$ or $\sigma=\{\l{e_{m+1}},\ldots,\l{e_{m+k}}\}$, respectively. Part~\ref{part:c} is a consequence of Part~\ref{part:a}.
\end{proof}

In the proof of the next proposition, we make use of the following lemma. It is certainly standard, but we could not find a proof in the literature.

\begin{lemma}
\label{lemma:coneCM}
Let $X$ be obtained from the simplicial complex $Y$ by coning off the subcomplex $Z$. If $Y$ is CM of dimension~$n$ and $Z$ is CM of dimension~$n-1$, then $X$ is CM of dimension~$n$.
\end{lemma}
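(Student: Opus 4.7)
The plan is to verify the two defining conditions of the Cohen--Macaulay property for $X = Y \cup (v \ast Z)$, where $v$ denotes the cone vertex and $Y \cap (v \ast Z) = Z$. The dimension statement is immediate, since $\dim(v \ast Z) = 1 + \dim Z = n = \dim Y$.

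For $X$ itself to be $n$-spherical, the plan is to apply Mayer--Vietoris and Seifert--van Kampen to the cover $X = Y \cup (v \ast Z)$. Since $v \ast Z$ is contractible, $Y$ is $(n-1)$-connected, and $Z$ is $(n-2)$-connected (by the CM hypothesis on $Z$), Mayer--Vietoris yields $\RH_k(X) = 0$ for $k \leq n-1$; when $n \geq 2$, van Kampen gives $\pi_1(X) = 1$, and Hurewicz upgrades this vanishing to $(n-1)$-connectedness of $X$. The low-dimensional cases $n=0,1$ reduce to checking nonemptiness or connectedness of $X$, which follow directly from nonemptiness of $Z$ (and of $Y$, via connectedness of $Y$ and $v$ being joined to $Z$).

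Next, for the link of a $(k-1)$-simplex $\sigma$ of $X$, I would case-split according to whether $v \in \sigma$ and, if not, whether $\sigma \subseteq Z$. If $v \in \sigma$, write $\sigma = \{v\} \cup \tau$ with $\tau$ a $(k-2)$-simplex of $Z$; then $\Link_X(\sigma) = \Link_Z(\tau)$ is $(n-k)$-spherical by CMness of $Z$. If $v \notin \sigma$ but some vertex of $\sigma$ lies outside $Z$, then every simplex of $X$ containing $\sigma$ must have a vertex outside $Z \cup \{v\}$ and hence cannot lie in $v \ast Z$; thus $\Link_X(\sigma) = \Link_Y(\sigma)$, which is $(n-k)$-spherical by CMness of $Y$. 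Finally, if $v \notin \sigma$ and $\sigma \subseteq Z$, then $\Link_X(\sigma) = \Link_Y(\sigma) \cup \bigl(\{v\} \ast \Link_Z(\sigma)\bigr)$, glued along $\Link_Z(\sigma)$; this is precisely the setup of the lemma again, but with spherical rather than CM pieces of dimensions $(n-k)$ and $(n-k-1)$. The same Mayer--Vietoris / van Kampen argument from the previous paragraph then shows $\Link_X(\sigma)$ is $(n-k)$-spherical.

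The main subtlety is the third subcase in the link analysis: one might be tempted to try to apply the lemma recursively (which would require also checking CMness of all lower-dimensional links), but this is unnecessary --- only the sphericity half of the argument needs to be reapplied to the pair $\bigl(\Link_Y(\sigma), \Link_Z(\sigma)\bigr)$. Beyond this, the proof is essentially bookkeeping, consistent with the authors' remark that the lemma is standard but lacks an explicit reference.
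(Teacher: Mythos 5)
Your proof is correct and follows the same overall strategy as the paper: decompose $X$ as $Y$ together with the cone $v \ast Z$ glued along $Z$, deduce global $n$-sphericity from the connectivity of $Y$ and $Z$ plus contractibility of the cone, and then analyze $\Link_X(\sigma)$ by cases according to the position of $\sigma$ relative to $v$ and $Z$. The differences are either cosmetic or in your favor. For global sphericity the paper argues via the pair $(X,Y)$: since $Z$ is $(n-2)$-connected the pair is $(n-1)$-connected, and the long exact sequence gives $(n-1)$-connectivity of $X$; this is interchangeable with your Mayer--Vietoris/van Kampen/Hurewicz argument. More substantively, in the case of a simplex $\sigma$ of $Z$ with $v \notin \sigma$, the paper simply asserts $\Link_X(\sigma) = \{v\} \ast \Link_Z(\sigma)$ and declares it CM of the appropriate dimension; as you observe, the link is really $\Link_Y(\sigma)$ with the subcomplex $\Link_Z(\sigma)$ coned off, so the paper's formula omits the $\Link_Y(\sigma)$ part, and your re-application of the sphericity argument to the pair $\bigl(\Link_Y(\sigma),\Link_Z(\sigma)\bigr)$ is exactly what is needed to handle this subcase honestly. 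In short, your write-up is, if anything, more careful than the paper's at the one delicate point, and your remark that only the sphericity half (not full CM-ness) needs to be re-run there is the right observation.
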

\begin{proof}
Let $p$ be the cone point.  Since $Z$ is $(n-2)$-connected, the pair $(X,Y)$ is $(n-1)$-connected, so $X$ is $n$-spherical. 
There are four kinds of simplices of $X$.  The first is $\{p\}$, whose link is $\Link_X \{p\} = Z$,
which is CM of dimension $(n-1)$ by assumption.  The second is $\sigma \ast \{p\}$ 
for some simplex $\sigma$ of $Z$; its link is $\Link_X (\sigma \ast \{p\}) = \Link_Z \sigma$, 
which is CM of the appropriate dimension
since $Z$ is CM of dimension $(n-1)$.  The third is a simplex $\sigma$ of $Y$ that does not lie in $Z$; its link is $\Link_X \sigma = \Link_Y \sigma$, which is CM of the appropriate dimension since $Y$ is CM of dimension $n$.
The fourth is a simplex $\sigma$ of $Z$; its link is $\Link_X \sigma = \{p\} \ast \Link_Z \sigma$, which is CM 
of the appropriate dimension because $Z$ is CM of dimension $(n-1)$.
\end{proof}

\begin{proposition}
\label{prop:makecm}
Fix $n \geq 1$ and $m \geq 0$ such that $m+n \geq 2$.  Assume that $\BA_{n'}^{m'}$ is CM of dimension $n'$ for
all $1 \leq n' < n$ and $m' \geq 0$ such that $m'+n' = m+n$.  Then for every $(k-1)$-simplex
$\sigma$ of $\BA_n^m$, the subcomplex $\Link_{\BA_n^m}(\sigma)$ is CM of dimension $n-k$.
\end{proposition}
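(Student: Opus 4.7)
The plan is to split into three cases by the type of $\sigma$ (Definition~\ref{def:trichotomy}) and, for standard simplices, by whether $k<n$ or $k=n$. If $\sigma$ is additive, the conclusion is immediate from Lemma~\ref{lemma:connectlinksba}(a): $\Link_{\BA_n^m}(\sigma) \cong \B_{n-k+1}^{m+k-1}$, which is CM of dimension $n-k$ by Theorem~\ref{theorem:basescon}. So the real work concerns standard simplices.

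Suppose $\sigma$ is standard with $1\leq k<n$. Set $X = \Link_{\BA_n^m}(\sigma)$ and $\widehat{X} = \PLink_{\BA_n^m}(\sigma)$. Lemma~\ref{lemma:connectlinksba}(b) identifies $\widehat{X}$ with $\BA_{n-k}^{m+k}$, which is CM of dimension $n-k$ by the proposition's standing induction hypothesis. Lemma~\ref{lemma:connectlinksba}(c) says each vertex $\l{v}\in X\setminus\widehat{X}$ satisfies $\Link_X(\l{v}) \subset \widehat{X}$ and $\Link_X(\l{v}) \cong \B_{n-k}^{m+k}$, which is CM of dimension $n-k-1$ by Theorem~\ref{theorem:basescon}. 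The key combinatorial fact I will establish is that any two distinct vertices of $X\setminus\widehat{X}$ are non-adjacent in $X$: both lie in the rank-$(m+k)$ summand $L = \Span_\Z(e_1,\ldots,e_m,v_1,\ldots,v_k)$, so an adjacency would force $\{\l{e_1},\ldots,\l{e_m},\sigma,\l{v},\l{v'}\}$ to be a partial augmented frame of $\Z^{m+n}$ with $m+k+2$ lines lying in $L$, contradicting the fact that any augmented frame of $\Z^{m+n}$ contains at most $(m+k)+1$ lines inside $L$ (at most $m+k$ basis lines plus possibly the extra line, since three of its lines are linearly dependent). Given this, $X$ is built from $\widehat{X}$ by attaching the star $\{\l{v}\}\ast\Link_X(\l{v})$ for each $\l{v}\in X\setminus\widehat{X}$, equivalently by coning off $\Link_X(\l{v})\subset\widehat{X}$ from the new vertex $\l{v}$; iterated application of Lemma~\ref{lemma:coneCM} then produces $X$ as a CM complex of dimension $n-k$.

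For standard $\sigma$ with $k=n$, the subcomplex $\widehat{X}$ is empty because $\Span_\Z(e_1,\ldots,e_m,v_1,\ldots,v_n) = \Z^{m+n}$. The same non-adjacency count (now reading $m+n+2$ versus the maximum $m+n+1$ lines of an augmented frame of $\Z^{m+n}$) forces $X$ to be $0$-dimensional, and it remains only to exhibit a single vertex. After an automorphism placing $\sigma=\{\l{e_{m+1}},\ldots,\l{e_{m+n}}\}$, the line $\l{(e_{m+1}+e_{m+2})}$ lies in $X$ when $n\geq 2$; when $n=1$ we have $m\geq 1$ (since $m+n\geq 2$), and $\l{(e_1+e_{m+1})}$ lies in $X$. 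Thus $X$ is a nonempty discrete set, i.e., CM of dimension $0=n-k$.

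The main obstacle I anticipate is formalizing the iteration in the $k<n$ standard case, since $X\setminus\widehat{X}$ is typically infinite and Lemma~\ref{lemma:coneCM} treats only one cone at a time. The non-adjacency observation is what rescues this: it guarantees $\Link_X(\l{v}) \subset \widehat{X}$ for \emph{every} new vertex $\l{v}$, so attachments at different $\l{v}$'s only touch $\widehat{X}$ and the order is immaterial, allowing Lemma~\ref{lemma:coneCM} to be applied either transfinitely or in a simultaneous fashion without disturbing the CM property.
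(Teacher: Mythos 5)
Your proof is correct and follows essentially the same route as the paper: additive simplices are handled by Lemma~\ref{lemma:connectlinksba}\ref{part:a} and Theorem~\ref{theorem:basescon}, standard simplices with $k=n$ by exhibiting a single vertex of the link, and standard simplices with $k<n$ by building $\Link_{\BA_n^m}(\sigma)$ from $\PLink_{\BA_n^m}(\sigma)\cong\BA_{n-k}^{m+k}$ via repeated applications of Lemma~\ref{lemma:coneCM}, exactly as in the paper (your explicit non-adjacency count is already implicit in Lemma~\ref{lemma:connectlinksba}\ref{part:c}). The only inaccuracy is your remark that $X\setminus\widehat{X}$ is ``typically infinite''---any such vertex $\l{v}$ satisfies $v=\pm u\pm u'$ with $u,u'\in\{e_1,\ldots,e_m,v_1,\ldots,v_k\}$, so there are only finitely many---but this is harmless since your order-independence observation handles either case.
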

\begin{proof}
If $\sigma$ is an additive simplex, then Lemma~\ref{lemma:connectlinksba}\ref{part:a} asserts that $\Link_{\BA_n^m}(\sigma)$ is isomorphic to $\B_{n-k+1}^{m+k-1}$, which Theorem~\ref{theorem:basescon} says is CM of dimension $n-k$.

If $\sigma$ is a standard simplex with $k=n$, then we can write $\sigma=\{\l{v_1},\ldots,\l{v_n}\}$
with $\{\l{e_1},\ldots,\l{e_m},\l{v_1},\ldots,\l{v_n}\}$ a frame for $\Z^{m+n}$.
If $m>0$, set $v_0= e_1+v_1$; otherwise, since $m+n \geq 2$ we must have $n\geq 2$ and we can set $v_0 = v_1+v_2$. 
In either case, the set $\{\l{e_1},\ldots,\l{e_m},\l{v_0},\l{v_1},\ldots,\l{v_n}\}$ is an augmented frame for 
$\Z^{m+n}$, so $\l{v_0}$ is contained in $\Link_{\BA_n^m}(\sigma)$. 
Therefore $\Link_{\BA_n^m}(\sigma)$ is nonempty, i.e.\ CM of dimension $0$.

Finally, if $\sigma$ is a standard simplex with $k<n$, define 
$X=\Link_{\BA_n^m}(\sigma)$ and $\widehat{X} = \PLink_{\BA_n^m}(\sigma)$.  Since $1\leq k<n$,  
Lemma~\ref{lemma:connectlinksba}\ref{part:b} asserts that $\widehat{X}$ is isomorphic to $\BA_{n-k}^{m+k}$, 
which by assumption is CM of dimension $n-k$.  Let $\l{v}$ be a vertex of $X$ that does not lie in
$\widehat{X}$.  Lemma~\ref{lemma:connectlinksba}\ref{part:c} says that 
$\Link_X(\l{v})$ lies in $\widehat{X}$ and is isomorphic to $\B_{n-k}^{m+k}$, so adding $\l{v}$ to $\widehat{X}$ has the
effect of coning off the subcomplex $\Link_X(\l{v}) \cong \B_{n-k}^{m+k}$.  This subcomplex 
is CM of dimension $n-k-1$ by Theorem~\ref{theorem:basescon}, so Lemma~\ref{lemma:coneCM} tells us that  coning off this subcomplex preserves the property of being CM of dimension $n-k$. Carrying this out for each vertex of $X$ not contained in $\widehat{X}$, we conclude that $X=\Link_{\BA_n^m}(\sigma)$ is CM of dimension $n-k$, as desired.
\end{proof}

\subsection{The retraction maps}
\label{section:basesacon2retract}

In this section, we construct two retractions of the links in $\BA_n^m$ (or parts of them), just as we did for $\B_n^m$ in  Lemma~\ref{lemma:bretract}.  We begin with the following definition.

\begin{definition}
Assume that some linear map 
$F\colon \Z^{m+n} \rightarrow \Z$ has been
fixed and let $X$ be a subcomplex of $\BA_n^m$.  For $N>0$, we define $X^{<N}$ to be the
full subcomplex of $X$ spanned by the set of vertices $\l{v}$ of $X$ satisfying
$\abs{F(v)} < N$.  This is well-defined since $\abs{F(v)} = \abs{F(-v)}$.  
\end{definition}

Our first retraction, for the link of an additive simplex, is straightforward.
\begin{lemma}
\label{lemma:baretract1}
Consider $n\geq 2$ and $m\geq 0$.
Let $F\colon \Z^{m+n} \rightarrow \Z$ be a fixed
linear map.  Let $\sigma$ be an additive 
simplex of $\BA_n^m$ such that there
exists some vertex $\l{w}$ of $\sigma$ with $F(w) = N>0$.  Then
there exists a simplicial retraction
$\pi\colon \Link_{\BA_n^m}(\sigma) \onto \Link_{\BA_n^m}(\sigma)^{<N}$.
\end{lemma}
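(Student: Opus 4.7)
The plan is to mirror the construction in Lemma~\ref{lemma:bretract} essentially unchanged. On vertices $\l{v}$ of $\Link_{\BA_n^m}(\sigma)$, first arrange $F(v) \geq 0$, then set $q_v = \lfloor F(v)/N \rfloor$ and define $\widehat{\pi}(\l{v}) = \l{(v - q_v w)}$ exactly as in equation~\eqref{eq:pihat}. This fixes every vertex already in $\Link_{\BA_n^m}(\sigma)^{<N}$ and sends every other vertex to one with $F$-value strictly less than $N$, so the entire task is to verify that $\widehat{\pi}$ extends simplicially over higher-dimensional simplices of the link.

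The crucial observation is that, letting $v_1, \ldots, v_p$ denote the standard vectors of $\sigma$, the chosen vertex $w$ lies in the lattice $L \coloneq \Span_\Z(e_1,\ldots,e_m,v_1,\ldots,v_p)$. This is immediate when $w$ is a standard vertex of $\sigma$; when $w$ is the additive vertex, it follows directly from Definition~\ref{def:trichotomy}, since then $w$ is an integral combination of two vectors drawn from $\{v_1,v_2,e_1,\ldots,e_m\}$. Consequently, for any simplex $\{\l{u_1},\ldots,\l{u_r}\}$ of $\Link_{\BA_n^m}(\sigma)$, the set $\{e_1,\ldots,e_m,v_1,\ldots,v_p,u_1,\ldots,u_r\}$ is a basis for a direct summand of $\Z^{m+n}$, and replacing each $u_i$ by $u'_i = u_i - q_{u_i}w$ produces another basis for the same summand while leaving the additive relation inside $\sigma$ completely untouched. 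Thus $\sigma \cup \widehat{\pi}(\{\l{u_1},\ldots,\l{u_r}\})$ is again a partial augmented frame.

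I expect the only substantive obstacle to be checking that each modified vector $u'_i$ still lies outside $\Span_\Z(e_1,\ldots,e_m)$, so that $\l{u'_i}$ continues to be a vertex of $\BA_n^m$ in the sense of Definition~\ref{def:BAmn}. If some $u'_i$ did lie in that span, then $u_i = u'_i + q_{u_i}w$ would lie in $L$, contradicting the linear independence of $\{e_1,\ldots,e_m,v_1,\ldots,v_p,u_i\}$. With this in hand, $\widehat{\pi}$ defines a simplicial map from $\Link_{\BA_n^m}(\sigma)$ into $\Link_{\BA_n^m}(\sigma)^{<N}$ that is the identity on the latter, i.e., a retraction.
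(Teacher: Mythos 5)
Your proposal is correct and takes essentially the same approach as the paper: apply the vertex formula \eqref{eq:pihat} and use the fact that, because $\sigma$ is already additive, the additive core of any simplex $\sigma\ast\tau$ is contained in $\sigma$, so the vertices of $\tau$ form a standard simplex relative to $\sigma$ and the argument of Lemma~\ref{lemma:bretract} goes through unchanged. Your observation that $w$ lies in $\Span_\Z(e_1,\ldots,e_m,v_1,\ldots,v_p)$ even when $\l{w}$ is the additive vertex is exactly the point that makes the basis replacement legitimate, and your check that no $u_i'$ falls into $\Span_\Z(e_1,\ldots,e_m)$ correctly handles the one condition specific to $\BA_n^m$.
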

\begin{proof}
The retraction $\pi$ is defined on vertices by the same formula \eqref{eq:pihat} as in Lemma~\ref{lemma:bretract}.  
The fact that $\sigma$ is an additive simplex ensures
that for all simplices $\tau$ of $\Link_{\BA_n^m}(\sigma)$, the additive core
of $\sigma \ast \tau$ is disjoint from $\tau$, which implies that there is no difficulty in extending $\pi$
over $\tau$.
\end{proof}

Our second retraction will be more difficult to construct because internally additive simplices are extremely constrained. Indeed, if two lines $\l{v_1}$ and $\l{v_2}$ are specified, there are only \emph{two} lines $\l{v_0}$ for which $\{\l{v_0},\l{v_1},\l{v_2}\}$ is an internally additive simplex.
As a result, if we attempt to define a retraction on the link of a standard simplex by \eqref{eq:pihat} as in Lemma~\ref{lemma:baretract1}, the retraction will  not extend across all additive simplices.

For example, consider the vectors $v_1=e_1+9e_4$, $v_2=e_2+9e_4$, $v_0=v_1+v_2=e_1+e_2+18e_4$, and $w=e_3+10e_4$, so $\{\l{v_0},\l{v_1},\l{v_2}\}$ forms an additive simplex of $\Link_{\BA_4}(\l{w})$. However, if we take $F\colon \Z^n\to \Z$ to be the coefficient of $e_4$ and define $\widehat{\pi}$ as in \eqref{eq:pihat}, then we have $\widehat{\pi}(\l{v_1})=\l{v_1}$ and $\widehat{\pi}(\l{v_2})=\l{v_2}$, but $\widehat{\pi}(\l{v_0})=\l{(v_0-w)}=\l{(e_1+e_2-e_3+8e_4)}$. Thus $\{\widehat{\pi}(\l{v_0}), \widehat{\pi}(\l{v_1}), \widehat{\pi}(\l{v_2})\}$ is not a simplex of $\Link_{\BA_n}(\l{w})$ at all.

In general this problem seems insuperable. We will solve it only for the link of a single vertex and only after restricting to the subcomplex $\PLink_{\BA_n^m}(\l{w})$; even then, to make the retraction well-defined we are forced to subdivide the complex first. This is the content of the following proposition.

\begin{proposition}
\label{prop:baretract2}
Consider $n\geq 2$ and $m\geq 0$.
Let $F\colon \Z^{m+n} \rightarrow \Z$ be a fixed linear map such that $F(e_i) = 0$ for $1 \leq i \leq m$. Let
$\l{w}$ be a vertex of $\BA_n^m$ such that $F(w) = N>0$.    Then there exists a topological retraction $\pi\colon \PLink_{\BA_n^m}(\l{w}) \onto \PLink_{\BA_n^m}(\l{w})^{<N}$.
\end{proposition}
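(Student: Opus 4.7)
The plan is to refine the formula of Lemma~\ref{lemma:bretract} into a vertex map, identify the simplices on which the simplicial extension fails, and repair these by an explicit stellar subdivision. On vertices of $\PLink_{\BA_n^m}(\l{w})$, define $\widehat{\pi}(\l{v}) = \l{(v - q_v w)}$ with $q_v = \lfloor F(v)/N \rfloor$ (choosing the representative of $\l{v}$ with $F(v) \geq 0$), exactly as in Lemma~\ref{lemma:bretract}; this sends every vertex into $\PLink_{\BA_n^m}(\l{w})^{<N}$ and fixes every vertex already there.

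Next, I would analyze when $\widehat{\pi}$ extends simplicially across a simplex $\tau$ of $\PLink_{\BA_n^m}(\l{w})$. Across any standard simplex of $\BA_n^m$, Lemma~\ref{lemma:bretract}'s column-operation argument applies verbatim. Across an externally additive simplex with core $v_0 = \pm v_1 \pm e_i$, the hypothesis $F(e_i) = 0$ gives $q_{v_0} = q_{v_1}$, so the relation $V_0 = \pm V_1 \pm e_i$ survives (writing $V_j \coloneq \widehat{\pi}(\l{v_j})$). The only genuine obstruction is an internally additive simplex with core $\{\l{v_0},\l{v_1},\l{v_2}\}$ satisfying $v_0 = v_1 + v_2$ (after choosing signs so $F(v_j) \geq 0$); writing $a_j \coloneq F(v_j) - q_j N \in [0,N)$, the identity $F(v_0) = F(v_1) + F(v_2)$ forces $q_0 - q_1 - q_2 \in \{0, 1\}$. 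In the \emph{good case} $q_0 = q_1 + q_2$ the relation $V_0 = V_1 + V_2$ persists and the simplicial extension succeeds; in the \emph{bad case} $q_0 = q_1 + q_2 + 1$, it degenerates to $V_1 + V_2 = V_0 + w$, and $\{V_0, V_1, V_2\}$ fails to be a simplex of $\PLink_{\BA_n^m}(\l{w})$, as the example preceding the proposition demonstrates.

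I would fix the bad case by stellar subdivision at each bad core $\sigma = \{\l{v_0},\l{v_1},\l{v_2}\}$, adding a new vertex $b_\sigma$ at the barycenter of $\sigma$ and splitting $\sigma$ into three $2$-simplices $\{b_\sigma, \l{v_i}, \l{v_j}\}$. Since the additive core of any internally additive simplex is uniquely determined, no $k$-simplex contains two bad cores, and the subdivision propagates unambiguously across every higher-dimensional internally additive simplex $\tau = \sigma \ast \{\l{v_3}, \ldots, \l{v_k}\}$ with core $\sigma$. Define $\pi(b_\sigma) = V_2' \coloneq \l{(v_2 - (q_2+1)w)}$, with a fixed convention distinguishing $v_1$ and $v_2$; because the bad case forces $a_1, a_2 > 0$, we have $|F(V_2')| = N - a_2 < N$, so $V_2'$ lies in $\PLink_{\BA_n^m}(\l{w})^{<N}$. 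Direct computation yields the identities $V_0 = V_1 + V_2'$ and $V_2 = V_2' + w$, and a $3 \times 3$ determinant in the basis $(w, v_1, v_2)$ (together with its $(k+1) \times (k+1)$ analogue in $(w, v_1, \ldots, v_k)$) verifies that each of the three subdivided pieces maps to a simplex of $\PLink_{\BA_n^m}(\l{w})^{<N}$: the image $\{V_0, V_1, V_2'\}$ is internally additive, while $\{V_1, V_2, V_2'\}$ and $\{V_0, V_2, V_2'\}$ are each augmented frames for $\Span_\Z(w, v_1, v_2)$ via the relation $V_2 = V_2' + w$.

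The main obstacle is the global bookkeeping: assembling the local stellar subdivisions into a consistent subdivision of $\PLink_{\BA_n^m}(\l{w})$, checking that different bad cores subdivide their shared lower-dimensional faces compatibly, and handling edge-case degeneracies (for instance, when $\l{w}$ lies in the span of two vertices of a standard simplex of $\PLink_{\BA_n^m}(\l{w})$, the naive retraction can identify those vertices, but this merely collapses a simplex and preserves continuity). Once these verifications are complete, the resulting $\pi$ is simplicial on the subdivided complex, hence a topological retraction of $\PLink_{\BA_n^m}(\l{w})$ onto $\PLink_{\BA_n^m}(\l{w})^{<N}$.
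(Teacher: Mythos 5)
Your proposal is essentially the paper's own proof: the same vertex formula $\l{v}\mapsto\l{(v-q_vw)}$, the same identification of the obstruction as the internally additive simplices with $q_0=q_1+q_2+1$ (the paper's ``carrying simplices''), the same repair by subdividing each bad core at its barycenter and sending the new vertex to $\l{(v_i'-w)}$ (the paper uses $v_1'-w$ where you use $v_2'-w$; the two choices are symmetric), and the same verification on the three resulting pieces. The one place you are loose is the claim that Lemma~\ref{lemma:bretract} applies ``verbatim'' to every standard simplex of $\BA_n^m$ in the link: a standard simplex $\tau$ with $\pm v_0\pm v_1\pm w=0$ becomes additive upon joining with $\l{w}$, so the column-operation argument's premise fails there; the paper treats these ``$\l{w}$-additive'' simplices in a separate claim, splitting into the collapsing case $F(v_0)\geq N$ (which your parenthetical about identified vertices does cover) and the harmless case $w=v_0+v_1$ with both $F$-values below $N$.
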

\begin{proof}
Define
$X = \PLink_{\BA_n^m}(\l{w})$, so our goal is to construct a topological retraction $\pi\colon X\onto X^{<R}$. 
We begin by defining a map $\widehat{\pi}\colon X^{(0)} \rightarrow (X^{<N})^{(0)}$ on $0$-simplices by the same formula \eqref{eq:pihat} as before.
To recap, we say that $v \in \Z^{m+n}$ is \emph{$F$-nonnegative} if $F(v) \geq 0$. For $F$-nonnegative $v$ we define $q_v = \lfloor \frac{F(v)}{N} \rfloor\in \N$ and set $\widehat{\pi}(\l{v}) = \l{(v - q_v w)}$, so $0\leq F(\widehat{\pi}(\l{v}))<N$.

If $\l{v}$ is a vertex of $X$, then by the definition of $\PLink_{\BA_n^m}(\l{w})$ we know that $v\notin \Span_{\Z}(e_1,\ldots,e_m,w)$, so $\{\l{e_1},\ldots,\l{e_m},\l{w},\l{v}\}$ is a partial frame for a rank $m+2$ summand of $\Z^{m+n}$. Thus $\{\l{e_1},\ldots,\l{e_m},\l{w},\widehat{\pi}(\l{v})\}$ is a partial frame for the same summand, so $\widehat{\pi}(\l{v})$ is a vertex of $X^{<N}$; moreover $\widehat{\pi}(\l{v}) = \l{v}$ if $\l{v}$ is a vertex of $X^{<N}$. In other words, $\widehat{\pi}$ is a retraction of the vertices of $X$ onto the vertices of $X^{<N}$.

Unfortunately, the map $\widehat{\pi}$ does not extend to a simplicial map on $X$, as we discussed above.
 What we will show
instead is that there exists a subdivision $Y$ of $X$ such that $X^{<N}$ is still a subcomplex of $Y$ (so
no simplices of $X^{<N}$ are subdivided) and an extension of $\widehat{\pi}$ to $Y$.

The trouble will occur only on the internally additive simplices.  Before we deal with these, we prove that
$\widehat{\pi}$ extends over the other simplices of $X$.
We distinguish the standard simplices lying in $X$ into two types:
\begin{compactitem}
\item A \emph{$\l{w}$-standard simplex} is a simplex $\sigma$ of $X$ such that $\sigma \ast \{\l{w}\}$ is a
standard simplex of $\BA_n^m$.
\item A \emph{$\l{w}$-additive simplex} is a simplex of $X$ that can be written in the form
$\{\l{v_0},\ldots,\l{v_p}\}$ with $\{\l{v_1},\ldots,\l{v_p}\}$ a  standard simplex of $\BA_n^m$ and
$\pm v_0 \pm v_1 \pm w = 0$ for some choice of signs.\end{compactitem}
\begin{claims}
\label{claim:wstandard}
The map $\widehat{\pi}$ extends over the $\l{w}$-standard simplices $\sigma$ of $X$.
\end{claims}
\begin{proof}[Proof of claim]
This is identical to the proof of the corresponding statement in the proof of Lemma~\ref{lemma:bretract}.
\end{proof}

\begin{claims}
The map $\widehat{\pi}$ extends over the externally additive simplices $\sigma$ of $X$.
\end{claims}
\begin{proof}[Proof of claim]
Write $\sigma=\{\l{v_0},\ldots,\l{v_p}\}$, where each $v_i$ is $F$-nonnegative, $\{\l{v_1},\ldots,\l{v_p}\}$ is a $\l{w}$-standard simplex, and $\pm v_0\pm v_1\pm e_i=0$ for some $i$ and some choice of signs. 
Since $F(e_i)=0$ and both $F(v_0)$ and $F(v_1)$ are nonnegative, the relation $\pm v_0\pm v_1\pm e_i=0$ implies that $F(v_0) = F(v_1)$.  Moreover, possibly replacing $v_1$ by $-v_1$ if $F(v_1)=0$, we have $v_0 = v_1 + \sign e_i$ for some $\sign \in \{\pm1\}$.   Since $F(v_0)=F(v_1)$ we have $q_{v_0} = q_{v_1}$, so
\[\{\widehat{\pi}(\l{v_0}),\ldots,\widehat{\pi}(\l{v_p})\} = \{\l{(v_1 - q_{v_1} w+ \sign e_i )},\l{(v_1-q_{v_1} w)},\ldots,\l{(v_p - q_{v_p} w)}\}.\]
This is an externally additive simplex of $X^{<N}$.
\end{proof}

\begin{claims}
The map $\widehat{\pi}$ extends over the $\l{w}$-additive simplices $\sigma$ of $X$.
\end{claims}
\begin{proof}[Proof of claim]
Write $\sigma=\{\l{v_0},\ldots,\l{v_p}\}$, where each $v_i$ is $F$-nonnegative, $\{\l{v_1},\ldots,\l{v_p}\}$ is a $\l{w}$-standard simplex, and $\pm v_0\pm v_1\pm w =0$ for some choice of signs.  
Exchanging $v_0$ and $v_1$ if necessary, we can assume that $F(v_0)\geq F(v_1)$. 

We first consider the case where $F(v_0)\geq N$.
Since $F(w)=N$, in this case the relation $\pm v_0\pm v_1\pm w=0$ implies that $v_0=v_1+w$ as long as $F(v_1)>0$. When $F(v_1)=0$, it implies only that $v_0=\pm v_1+w$, but replacing $v_1$ by $-v_1$ we can still assume that $v_0=v_1+w$. The relation $v_0=v_1+w$ implies that $q_{v_0}=q_{v_1}+1$, so we have 
\[\pihat(\l{v_0})=\l{(v_0-q_{v_0}w)}=\l{((v_1+w)-(q_{v_1}+1)w)}=\l{(v_1-q_{v_1}w)}=\pihat(\l{v_1}).\]
In other words, the $\l{w}$-additive edge $\{\l{v_0},\l{v_1}\}$ of $X$ is collapsed by $\widehat{\pi}$ to a single vertex of $X^{<N}$. Similarly, the $\l{w}$-additive $p$-simplex $\sigma$ is collapsed by $\widehat{\pi}$ to
\[\{\widehat{\pi}(\l{v_0}),\ldots,\widehat{\pi}(\l{v_p})\} =\{\l{(v_1-q_{v_1} w)},\ldots,\l{(v_p - q_{v_p} w)}\},\]
a $\l{w}$-standard $(p-1)$-simplex of $X^{<N}$.  We remark that this is the only case in which the dimension of a simplex is decreased by $\widehat{\pi}$.

In the remaining case, we have $0 \leq F(v_1)\leq F(v_0)<N$. Since $F(w)=N$, the relation $\pm v_0\pm v_1\pm w=0$ implies in this case that $w=v_0+v_1$. Since $F(v_0)<N$ and $F(v_1)<N$, we have $q_{v_0} = q_{v_1} = 0$, so $\pihat(\l{v_0})=\l{v_0}$ and $\pihat(\l{v_1})=\l{v_1}$. Therefore 
\begin{align*}
\{\widehat{\pi}(\l{v_0}),\pihat(\l{v_1}),\ldots,\widehat{\pi}(\l{v_p})\} 
&= \{\l{v_0}, \l{v_1}, \l{(v_2 - q_{v_2} w)},\ldots,\l{(v_p - q_{v_p} w)}\}.
\end{align*}
is a $\l{w}$-additive $p$-simplex of $X^{<N}$.
\end{proof}

The last remaining class of simplices are the internally additive simplices.  It will turn
out that certain kinds of internally additive simplices will cause trouble. 
Consider an internally additive simplex $\sigma$.  Write $\sigma = \{\l{v_0},\ldots,\l{v_p}\}$ where each $v_i$ is $F$-nonnegative and $\pm v_0 \pm v_1 \pm v_2 = 0$ for some choice of
signs.  If all three signs are the same, then we must have $F(v_0)=F(v_1)=F(v_2) = 0$, so
we can negate $v_0$ without changing the fact that $v_0$ is $F$-nonnegative.
The upshot is that we can assume that the three signs are not all the same.  Reordering
the $v_i$ if necessary, we can thus assume that $v_0 = v_1 + v_2$.
We call $\sigma$ a \emph{carrying simplex} if
\begin{equation}
\label{eq:carrying}
\Big\lfloor \frac{F(v_0)}{N} \Big\rfloor \neq \Big\lfloor \frac{F(v_1)}{N} \Big\rfloor + \Big\lfloor \frac{F(v_2)}{N} \Big\rfloor.
\end{equation} 
To check that this is well-defined, observe that for the inequality \eqref{eq:carrying} to hold we must have $F(v_i)>0$ for $0 \leq i \leq 2$, in which case $v_0$
is uniquely determined since $F(v_0)$ is the maximum value among $\{F(v_0), F(v_1), F(v_2)\}$.
The following claim shows that the carrying simplices are the only possible source of trouble.

\begin{claims}
The map $\widehat{\pi}$ extends over the internally additive simplices $\sigma$ of $X$ that
are not carrying simplices.
\end{claims}
\begin{proof}[Proof of claim]
Write $\sigma = \{\l{v_0},\ldots,\l{v_p}\}$, where each $v_i$ is $F$-nonnegative and $v_0=v_1+v_2$. 
Since $\sigma$ is not a carrying simplex, we have
\[q_{v_0} = \Big\lfloor \frac{F(v_0)}{N} \Big\rfloor = \Big\lfloor \frac{F(v_1)}{N} \Big\rfloor + \Big\lfloor \frac{F(v_2)}{N} \Big\rfloor = q_{v_1} + q_{v_2}.\]
This implies that 
\[\pihat(\l{v_0})=\l{(v_0 - q_{v_0} w)}=\l{((v_1+v_2) - (q_{v_1}+q_{v_2}) w)}
=\l{((v_1-q_{v_1} w) + (v_2-q_{v_2} w) )}.\]
Therefore 
\begin{align*}
\{\widehat{\pi}(\l{v_0}),&\pihat(\l{v_1}),\pihat(\l{v_2}),\ldots,\widehat{\pi}(\l{v_p})\}\\
&= \{\l{((v_1-q_{v_1} w) + (v_2-q_{v_2} w) )}, \l{(v_1-q_{v_1} w)},\l{(v_2-q_{v_2} w)}\ldots,\l{(v_p - q_{v_p} w)}\}
\end{align*}
is an internally additive simplex of $X^{<N}$.
\end{proof}

It remains to deal with the carrying simplices. The key to our approach is the observation that even though the inequality \eqref{eq:carrying} may hold, the two sides never differ by more than $1$. Formally, this is the observation that the function
$\omega_N$ on $\Z^2$ defined by \[\Big\lfloor \frac{n_1+n_2}{N} \Big\rfloor = \Big\lfloor \frac{n_1}{N} \Big\rfloor + \Big\lfloor \frac{n_2}{N} \Big\rfloor + \omega_N(n_1,n_2),\] takes values \emph{only in $\{0,1\}$}. We remark that $\omega_N$ descends to a well-defined function\linebreak $\overline{\omega}_N\colon (\Z/N)^2 \rightarrow \{0,1\}$.
Regarding its image as lying in $\{0,1\}\subset \Z/N$, the function $\overline{\omega}_N$ is a group cocycle on
$\Z/N$ whose cohomology class is the Euler class of the nonsplit central
 extension
\[0 \longrightarrow \Z/N \longrightarrow \Z/N^2 \longrightarrow \Z/N \longrightarrow 0.\]
It is known as the \emph{carrying cocycle} because it records when carrying is necessary when adding modulo $N$; see Isaksen~\cite{IsaksenCarrying}.\medskip

Let $\Tri$ be the set of $2$-dimensional carrying simplices.  Define the simplicial complex $Y$ to be the result
of subdividing $X$ by adding a vertex $\tau_c$ to the center of each simplex $c \in \Tri$.
No carrying simplex can be contained in $X^{<N}$ since $\omega_N(0,-)=\omega_N(-,0)=0$. Thus the subcomplex $X^{<N}$ is not affected by this subdivision, so we can regard $X^{<N}$ as a subcomplex of $Y$.
Extend $\widehat{\pi}\colon X^{(0)} \rightarrow (X^{<N})^{(0)}$ to  $\widehat{\pi}\colon Y^{(0)} \rightarrow (X^{<N})^{(0)}$ as follows.
\begin{itemize}
\item Given $c \in \Tri$, write $c = \{\l{v_0},\l{v_1},\l{v_2}\}$, where each $v_i$ has $F(v_i)>0$ and $v_0 = v_1+v_2$.
The ordering of the $v_i$ is not canonical (as we mentioned above, $v_0$ is uniquely determined, but there is no way to distinguish $v_1$ and $v_2$), so simply make an arbitrary choice
for each $c$.
Define $\widehat{\pi}(\tau_c) = \l{(v_1 - q_{v_1}w - w)}$.
\end{itemize}
We first verify that $\pihat(\tau_c)$ lies in $X^{<N}$. By definition, $0\leq F(v_1-q_{v_1}w)<N$. However, the fact that $\omega_N(N\cdot n_1,-)=0$ means that a carrying simplex cannot have $F(v_1-q_{v_1}w)=0$ since this would imply $F(v_1)=q_{v_1}F(w)=q_{v_1}N$. Therefore $0< F(v_1-q_{v_1}w)<N$ and hence $-N< F(v_1-q_{v_1}w-w)<0$, as desired.

\begin{claims}
\label{claim:carrying}
The map $\widehat{\pi}$ extends over the images in $Y$ of carrying simplices $\sigma$ of $X$.
\end{claims}
\begin{proof}[Proof of claim]
Write $\sigma = \{\l{v_0},\ldots,\l{v_p}\}$, where each $v_i$ is $F$-nonnegative and $v_0 = v_1 + v_2$ 
and where for $c = \{\l{v_0},\l{v_1},\l{v_2}\}$ we have
$\widehat{\pi}(\tau_c) = \l{(v_1 + q_{v_1}w - w)}$.
By the definition of a carrying simplex, we have
\[q_{v_0} =  \Big\lfloor \frac{F(v_1)+F(v_2)}{N} \Big\rfloor =\Big\lfloor \frac{F(v_1)}{N} \Big\rfloor + \Big\lfloor \frac{F(v_2)}{N} \Big\rfloor + \omega_N(F(v_1),F(v_2)) = q_{v_1} + q_{v_2} + 1.\]  To simplify our notation,
for $0 \leq i \leq p$ we define $v'_i = v_i - q_{v_i}w$, so $\widehat{\pi}(\l{v_i}) = \l{(v'_i)}$.  Observe that $\widehat{\pi}(\tau_c) = \l{(v'_1 - w)}$ and
$v'_0 = v'_1 + v'_2 - w$.

The image of $\sigma$ in $Y$ consists of the three simplices
\[\alpha=\{\tau_c, \l{v_1}, \l{v_2},\l{v_3},\ldots,\l{v_p}\}, \quad\beta=
\{\l{v_0}, \tau_c, \l{v_2},\l{v_3},\ldots,\l{v_p}\}, \quad\gamma=
\{\l{v_0}, \l{v_1}, \tau_c, \l{v_3},\ldots,\l{v_p}\}.\]
We verify that $\widehat{\pi}$ extends over each of these in turn.   
For the first simplex $\alpha$ and the third simplex $\gamma$, we use $\pihat(\tau_c)=\l{(v'_1 - w)}$ to write
\begin{align*}
\pi(\alpha)&=\{\widehat{\pi}(\tau_c), \widehat{\pi}(\l{v_1}), \widehat{\pi}(\l{v_2}),\widehat{\pi}(\l{v_3}),\ldots,\widehat{\pi}(\l{v_p})\} \\
&= \{ \pihat(\tau_c), \l{(v'_1)}, \l{(v'_2)},\ldots,\l{(v'_p)}\}\\
&= \{\l{(v'_1-w)}, \l{(v'_1)}, \l{(v'_2)},\ldots,\l{(v'_p)}\}.\\
\pi(\gamma)&=\{\widehat{\pi}(\l{v_0}), \widehat{\pi}(\l{v_1}), \widehat{\pi}(\tau_c), \widehat{\pi}(\l{v_3}),\ldots,\widehat{\pi}(\l{v_p})\}\\
&= \{\l{(v'_0)}, \l{(v'_1)}, \pihat(\tau_c), \l{(v_3')},\ldots,\l{(v'_p)}\}\\
&= \{ \l{(v'_0)}, \l{(v'_1)}, \l{(v'_1-w)}, \l{(v_3')},\ldots,\l{(v'_p)}\}.
\end{align*}
These are both $\l{w}$-additive simplices of $X^{<N}$.
For the second simplex $\beta$, from $v'_0 = v'_1 + v'_2 - w$ we deduce the alternate identity $\widehat{\pi}(\tau_c) =\l{(v'_0-v'_2)}$, which we use to write
\begin{align*}
\pi(\beta)&=\{\widehat{\pi}(\l{v_0}), \widehat{\pi}(\tau_c), \widehat{\pi}(\l{v_2}),\widehat{\pi}(\l{v_3}),\ldots,\widehat{\pi}(\l{v_p})\}\\
&= \{\l{(v'_0)}, \pihat(\tau_c), \l{(v'_2)},\ldots,\l{(v'_p)}\}\\
&= \{\l{(v'_0)}, \l{(v'_0-v'_2)}, \l{(v'_2)},\ldots,\l{(v'_p)}\}.
\end{align*}
This is an internally additive simplex of $X^{<N}$. 
\end{proof}

Claims~\ref{claim:wstandard}--\ref{claim:carrying} demonstrate that $\widehat{\pi}\colon Y^{(0)} \rightarrow (X^{<N})^{(0)}$ extends over every simplex of $Y$, so it defines a simplicial retraction $\pi\colon Y\onto X^{<N}$. Since $Y$ is a subdivision of $X$, their realizations are homeomorphic, so this defines a topological retraction $\pi\colon X\onto X^{<N}$. This completes the proof of Proposition~\ref{prop:baretract2}.
\end{proof}

\subsection{The proof of Theorem~\texorpdfstring{\ref{theorem:basesacon}}{C'}}
\label{section:basesacon2proof}

We finally prove Theorem~\ref{theorem:basesacon}, which asserts that $\BA_n^m$ is CM of dimension $n$ for
$n \geq 1$ and $m \geq 0$ with $m+n \geq 2$.  The proof will be by induction on $n$.  

\para{Base case} We begin with the base case $n=1$. Our goal is to prove for $m\geq 1$ that $\BA_1^m$ is CM of dimension 1, i.e.\ is a connected nonempty graph. The vertices of the 1-dimensional complex $\BA_1^m$ are the vertices of $\B_1^m$, namely the lines spanned by vectors $w\in \Z^{m+1}$ 
such that $\{e_1,\ldots,e_m,w\}$ is a basis for $\Z^{m+1}$.  We can write such a vector as
\[w=a_1 e_1 + \cdots + a_m e_m \pm e_{m+1}\]
for some $a_i \in \Z$ and some sign.  Replacing $w$ with $-w$ changes the final sign, so we deduce that
the vertices of $\B_1^m$ are in bijection with elements $\abold \in \Z^m$ via the bijection that
takes $\abold = (a_1,\ldots,a_m)$ to the line $\l{v_{\abold}}$ with
\[v_{\abold}  = a_1 e_1 + \cdots a_m e_m + e_{m+1}.\]
Every 1-simplex of $\BA_1^m$ is externally additive since an internally additive simplex has dimension at least 2. 
Two lines $\l{v_{\abold}}$ and $\l{v_{\abold'}}$ determine an externally additive 1-simplex  
precisely when $\sign v_{\abold'} + \sign' v_{\abold} + \sign'' e_i = 0$ for some $1 \leq i \leq m$ 
and some $\sign, \sign', \sign'' = \pm 1$.  Examining the coefficient of $e_{m+1}$ in this
expression, we see that $\sign = \sign'$.  This implies that 
$\l{v_{\abold}}$ and $\l{v_{\abold'}}$ determine an externally additive 1-simplex exactly when
$\abold \in \Z^m$ and $\abold' \in \Z^m$ differ by a standard basis vector.

 We conclude that
$\BA_1^m$ is isomorphic to the Cayley graph of $\Z^m$ with respect to the generating set
$\{e_1,\ldots,e_m\}$, and is thus connected.  We remark that this is one point in the argument where working with lines and frames is essential; if we worked instead with primitive \emph{vectors} and \emph{bases}, we would obtain a disconnected graph consisting of two copies of this Cayley graph, one consisting of all vectors with $e_{m+1}$ coordinate 1 and the other consisting of those with coordinate $-1$.  This concludes the proof of the base case.

\para{Inductive step}
We now assume that $n>1$ and that
$\BA_{n'}^{m'}$ is CM of dimension $n'$ for all $1\leq n' <n$ and $m' \geq 0$ with $m'+n' \geq 2$. Under these assumptions, 
Proposition~\ref{prop:makecm} states that all links in $\BA_n^m$ are CM of the appropriate dimension, so it is enough to 
prove that $\BA_n^m$ is $(n-1)$-connected.

Fix $0 \leq p \leq n-1$, let $S^p$ be a combinatorial triangulation of a $p$-sphere, and let
$\phi\colon S^p \rightarrow \BA_{n}^{m}$ be a simplicial map.  Our goal is to show that $\phi$
can be homotoped to a constant map.
Let $F\colon \Z^{m+n} \onto \Z$ be the linear map taking $v \in \Z^{m+n}$ to the $e_{m+n}$-coordinate of $v$.
For a vertex $\l{v}$ of $\BA_n^m$, define $\rank(\l{v}) = \abs{F(v)}$; this is well-defined since
$\abs{F(v)} = \abs{F(-v)}$.  We then define
\[R(\phi) = \Max \Set{$\rank(\phi(x))$}{$x$ a vertex of $S^p$}.\]
This will be our measure of complexity for $\phi$.

If $R(\phi)=0$, then every simplex $\sigma$ of $\phi(S^p)$ is contained in the summand $\ker F$ of $\Z^{m+n}$. In particular, $\{\l{e_1},\ldots,\l{e_m}\}\ast\sigma$ is a partial augmented frame contained in $\ker F$; by Remark~\ref{remark:summandofsummand}, it is in fact a partial augmented frame for $\ker F$, so it can be extended to a partial augmented frame
for $\Z^{n+m}$ by adding the line $\l{e_{m+n}}$.  In other words, the entire image $\phi(S^p)$ is contained in the star of $\l{e_{m+n}}$ within $\BA_n^m$, so we can directly contract $\phi$ to the constant map whose image is the vertex $\l{e_{m+n}}$. 

We can therefore assume that $R(\phi)=R>0$.  The proof now is divided into four steps.  The end product of these four steps is that we can homotope
$\phi$ so as to decrease $R(\phi)$.  Repeating these steps over and over, we can eventually homotope $\phi$ so that
$R(\phi)=0$, at which point  we can contract $\phi$ directly to a constant map as above.

\begin{step}
\label{step:1}
Given $\phi\colon S^p\to \BA_n^m$ with $R(\phi)\leq R$, we can homotope $\phi$ so that it satisfies the following Conditions~\ref{cond:11} and \ref{cond:21}.
\begin{compactenum}[label={\normalfont C\arabic*.},ref={C\arabic*}]
\item \label{cond:11} We still have $R(\phi) \leq R$.
\item \label{cond:21} If $\sigma$ is a simplex of $S^p$ such that $\phi(\sigma)$ is an additive simplex, then for all vertices $x$ of $\sigma$ we have $\rank(\phi(x)) < R$.
\end{compactenum}
\end{step}

Consider the following condition on a simplex $\sigma$ of $S^p$:
\begin{equation}
\label{eq:BAcond1}
\begin{aligned}
&\text{$\phi(\sigma)$ is an additive simplex, and}\\
&\text{some vertex $\l{v}$ of $\phi(\sigma)$ has $\rank(\l{v})=R$, and}\\
&\text{every vertex $\l{w}$ of $\phi(\sigma)$ either has $\rank(\l{w}) = R$ or lies in the additive core of $\phi(\sigma)$.}
\end{aligned}
\end{equation}
If $\phi$ does not satisfy Conditions~\ref{cond:11} and \ref{cond:21}, then there must
be some simplex $\sigma$ of $S^p$ satisfying \eqref{eq:BAcond1}.  
We can therefore
choose a simplex $\sigma$ of $S^p$ satisfying \eqref{eq:BAcond1} whose 
dimension $k$ is maximal among those satisfying
\eqref{eq:BAcond1}.  This maximality implies that $\phi$ takes $\Link_{S^p}(\sigma)$ to
$\Link_{\BA_n^m}(\phi(\sigma))^{<R}$.  

Let 
$\ell$ be the dimension of $\phi(\sigma)$; we certainly have $\ell \leq k$, but we 
might have $\ell < k$ if the restriction of $\phi$ to
$\sigma$ is not injective.  Lemma~\ref{lemma:connectlinksba}\ref{part:a} states that $\Link_{\BA_n^m}(\phi(\sigma))$ is isomorphic to $\B_{n-\ell}^{m+\ell}$, which
by Theorem~\ref{theorem:basescon} is CM of dimension $(n-\ell-1)$, and 
in particular is $(n-\ell-2)$-connected.  Using Lemma~\ref{lemma:baretract1},
we deduce that its retract $\Link_{\BA_n^m}(\phi(\sigma))^{<R}$ is $(n-\ell-2)$-connected.

By the definition of a combinatorial triangulation, 
$\Link_{S^p}(\sigma)$ is a combinatorial $(p-k-1)$-sphere.  
Since $p \leq n-1$ and $\ell \leq k$, we have $p-k-1 \leq n-\ell-2$, so $\phi|_{\Link_{S^p}(\sigma)}$ is null-homotopic within $\Link_{\BA_n^m}(\phi(\sigma))^{<R}$.
Using Zeeman's relative simplicial approximation theorem \cite{Zeeman}, we deduce 
that there exists a combinatorial $(p-k)$-ball $B$ with $\partial B \iso \Link_{S^p}(\sigma)$
and a simplicial map $\psi\colon B \rightarrow \Link_{\BA_n^m}(\phi(\sigma))^{<R}$ such that
$\psi|_{\partial B} = \phi|_{\Link_{S^p}(\sigma)}$.

The map $\psi$ extends to the $(p+1)$-ball $\sigma\ast B$ as $(\phi|_\sigma) \ast \psi \colon \sigma\ast B\to \B_n^m$.
The boundary of $\sigma\ast B$ is the union of the $p$-ball $\sigma\ast(\partial B)=\Star_{S^p}(\sigma)$, on which
$\phi|_\sigma\ast \psi=\phi|_{\Star_{S^p}(\sigma)}$, and the $p$-ball $(\partial \sigma)\ast B$.  We can thus homotope
$\phi$ across this $(p+1)$-ball to replace $\phi|_{\Star_{S^p}(\sigma)}$ with
$\phi|_{\partial \sigma}\ast\psi\colon (\partial \sigma)\ast B\to \B_n^m$.

The key property of this modification is that it eliminates the simplex 
$\sigma$ and does not add any other simplices satisfying \eqref{eq:BAcond1} or any
vertices mapping to vertices with $r(\l{v})\geq R$. Indeed, every new vertex lies in $B$, which maps to $\Link_{\BA_n^m}(\phi(\sigma))^{<R}$ by construction; this verifies the second claim. Moreover, every new simplex $\tau$ is the join of a simplex in $\partial\sigma$ with a nonempty simplex $\rho$ in $B$. Its image $\phi(\tau)$ is contained in $\phi(\sigma\ast\rho)=\phi(\sigma)\ast\phi(\rho)$. Thus $\phi(\tau)$ is only additive if it contains the additive core of $\phi(\sigma)$, in which case this is also the additive core of $\phi(\tau)$. Since $\phi(\rho)$ is disjoint from $\phi(\sigma)$ and every vertex has $r(\l{v})<R$ by construction, the new simplex $\tau$ cannot satisfy  \eqref{eq:BAcond1}.

Repeating this modification, we can homotope $\phi$ so that no simplex of $S^p$ satisfies \eqref{eq:BAcond1}, so $\phi$ satisfies Conditions~\ref{cond:11} and \ref{cond:21}, as desired.

\begin{step}
\label{step:2}
Given $\phi\colon S^p\to \BA_n^m$ satisfying Conditions~\ref{cond:11} and \ref{cond:21}, we can homotope $\phi$ so that it still satisfies the same Conditions~\ref{cond:12} and \ref{cond:22}, and additionally satisfies the following Condition~\ref{cond:3prime2}.
\begin{compactenum}[label={\normalfont C\arabic*.},ref={C\arabic*}]
\item \label{cond:12} We still have $R(\phi) \leq R$.
\item \label{cond:22} If $\sigma$ is a simplex of $S^p$ such that $\phi(\sigma)$ is an additive simplex, then for all vertices $x$ of $\sigma$ we have $\rank(\phi(x)) < R$.
\end{compactenum}
\begin{compactenum}[label={\normalfont C3\ensuremath{'}.}, ref={C3\ensuremath{'}}]
\item \label{cond:3prime2} If $x_1$ and $x_2$ are distinct vertices of $S^p$ such that $\rank(\phi(x_1)) = \rank(\phi(x_2)) = R$ and
$\phi(x_1) = \phi(x_2)$, then $x_1$ and $x_2$ are not joined by an edge in $S^p$.
\end{compactenum}
\end{step}

Although Conditions~\ref{cond:12} and \ref{cond:22} are restated multiple times in this section for convenience, we emphasize that these conditions are unchanged throughout.

Consider the following condition on a simplex $\sigma$ of $S^p$:
\begin{equation}
\label{eq:BAcond2}
\begin{aligned}
&\text{$\phi|_{\sigma}$ is not injective, and}\\
&\text{every vertex $\l{v}$ of $\phi(\sigma)$ has $\rank(\l{v})=R$}.
\end{aligned}
\end{equation}
If $\phi$ satisfies Conditions~\ref{cond:12} and \ref{cond:22} but not Condition~\ref{cond:3prime2}, then there must
be some simplex $\sigma$ of $S^p$ satisfying \eqref{eq:BAcond2}.
We can therefore choose a simplex $\sigma$ of $S^p$ satisfying \eqref{eq:BAcond2} whose
dimension $k$ is maximal among those satisfying
\eqref{eq:BAcond2}.  This maximality implies that $\phi$ takes $\Link_{S^p}(\sigma)$ to
$\Link_{\BA_n^m}(\phi(\sigma))^{<R}$.

In fact, even more is true.  Namely, Condition~\ref{cond:22} implies that if $\tau$ is a simplex of $\Link_{S^p}(\sigma)$,
then the simplex $\phi(\tau) \ast \phi(\sigma)$ of $\BA_n^m$ must be a standard simplex.  This
implies that $\phi$ actually takes $\Link_{S^p}(\sigma)$ to the subcomplex
$\Link_{\B_n^m}(\phi(\sigma))^{<R}$ of $\Link_{\BA_n^m}(\phi(\sigma))^{<R}$.  

Let
$\ell$ be the dimension of $\phi(\sigma)$; since $\phi|_{\sigma}$ is not injective,
we have $\ell \leq k-1$.  Theorem~\ref{theorem:basescon} says that $\B_n^m$ is CM
of dimension $(n-1)$, and hence $\Link_{\B_n^m}(\phi(\sigma))$ is
$(n-\ell-3)$-connected.  By Lemma~\ref{lemma:bretract}, its retract $\Link_{\B_n^m}(\phi(\sigma))^{<R}$ is $(n-\ell-3)$-connected.

The complex $\Link_{S^p}(\sigma)$ is a combinatorial $(p-k-1)$-sphere.   Since $p \leq n-1$ and $\ell \leq k-1$, we have $p-k-1 \leq n-\ell-3$, so $\phi|_{\Link_{S^p}(\sigma)}$ is null-homotopic within the subcomplex $\Link_{\B_n^m}(\phi(\sigma))^{<R}$ of $\BA_n^m$.
Therefore there exists a combinatorial $(p-k)$-ball $B$ with $\partial B \iso \Link_{S^p}(\sigma)$
and a simplicial map $\psi\colon B \rightarrow \Link_{\B_n^m}(\phi(\sigma))^{<R}$ such that
$\psi|_{\partial B} = \phi|_{\Link_{S^p}(\sigma)}$.

As in the previous step,
we can use this ball to homotope $\phi$ so as to replace $\phi|_{\Star_{S^p}(\sigma)}$ with
$\phi|_{\partial \sigma}\ast\psi\colon (\partial \sigma)\ast B\to \BA_n^m$.
The key property of this modification is that it eliminates the simplex
$\sigma$ and does not add any other simplices satisfying \eqref{eq:BAcond2}, while preserving Conditions~\ref{cond:12} and \ref{cond:22}. Indeed, every new vertex has $r(\l{v})<R$, so Condition~\ref{cond:12} is preserved. Every new simplex contains a new vertex, so it cannot satisfy \eqref{eq:BAcond2}. Finally, none of the simplices involved are additive since
the modifications in this step take place within the subcomplex $\B_n^m$, so Condition~\ref{cond:22} is preserved.
Repeating this, we can ensure that no simplices satisfy \eqref{eq:BAcond2} while preserving Conditions~\ref{cond:12} and \ref{cond:22}, as
desired.

\begin{step}
\label{step:3}
Given $\phi\colon S^p\to \BA_n^m$ satisfying Conditions~\ref{cond:12}, \ref{cond:22}, and \ref{cond:3prime2}, we can homotope $\phi$ so
that it satisfies the following Condition~\ref{cond:33}, as well as Conditions~\ref{cond:13} and \ref{cond:23}.
\begin{compactenum}[label={\normalfont C\arabic*.},ref={C\arabic*}]
\item \label{cond:13} We still have $R(\phi) \leq R$.
\item \label{cond:23} If $\sigma$ is a simplex of $S^p$ such that $\phi(\sigma)$ is an additive simplex, then for all vertices $x$ of $\sigma$ we have $\rank(\phi(x)) < R$.
\item \label{cond:33} If $x_1$ and $x_2$ are distinct vertices of $S^p$ such that $\Rank(\phi(x_1)) = \Rank(\phi(x_2)) = R$,
then $x_1$ and $x_2$ are not joined by an edge in $S^p$.
\end{compactenum}
\end{step}

If $\phi$ does not satisfy Condition~\ref{cond:33}, then there exists an edge
$e=\{x_1,x_2\}$ of $S^p$ with $\rank(\phi(x_1)) = \rank(\phi(x_2)) = R$.  Choose such an edge $e$. We will
homotope $\phi$ so as to eliminate $e$ without disturbing Conditions~\ref{cond:13}, \ref{cond:23}, or \ref{cond:3prime2}.

Choose $v_1,v_2 \in \Z^{m+n}$ with $F(v_1)=F(v_2)=R$ such that $\phi(x_1)=\l{v_1}$ and $\phi(x_2)=\l{v_2}$.  
Set $v_0= v_1 - v_2$, so $F(v_0) = 0$.  Condition~\ref{cond:3prime2} guarantees that 
$\l{v_1} \neq \l{v_2}$, so $v_0 \neq 0$.  Thus $\{\l{v_0},\l{v_1},\l{v_2}\}$ is an internally additive 
simplex and $\l{v_0}$ lies in $\Link_{\BA_n^m}(\{\l{v_1},\l{v_2}\})$.

Moreover, we claim that $\phi(\Link_{S^p}(e))$ is contained in the star of $\l{v_0}$ inside the subcomplex
$\Link_{\BA_n^m}(\{\l{v_1},\l{v_2}\})$.
To see this, consider an arbitrary simplex $\tau=\{\l{w_1},\ldots,\l{w_k}\}$ in $\phi(\Link_{S^p}(e))$.
Condition~\ref{cond:3prime2}  implies that $\tau$ is disjoint from $\{\l{v_1},\l{v_2}\}$, so $\tau$ lies in 
$\Link_{\BA_n^m}(\{\l{v_1},\l{v_2}\})$. By Condition~\ref{cond:23}, all simplices in $\phi(\Star_{S^p}(e))$ are 
standard, so in fact $\tau$ is a simplex of $\Link_{\B_n^m}(\{\l{v_1},\l{v_2}\})$.  This means that 
$\tau$ is contained is some frame $\{\l{e_1},\ldots,\l{e_m},\l{v_1},\l{v_2},\l{w_1},\ldots,\l{w_{n-2}}\}$ 
for $\Z^{m+n}$.  Then $\{\l{e_1},\ldots,\l{e_m},\l{v_0},\l{v_1},\l{v_2},\l{w_1},\ldots,\l{w_{n-2}}\}$ 
is an augmented frame, so $\tau$ is contained in the star of $\l{v_0}$, as desired.

Let $B$ be the cone on the combinatorial $(p-2)$-sphere $\Link_{S^p}(e)$. Since 
$\phi(\Link_{S^p}(e))$ is contained in the star of $\l{v_0}$, we can extend $\phi|_{\Link_{S^p}(e)}$ to
$\psi\colon B \rightarrow \Link_{\PartialBases_{n}^{m}}(\l{v})$ by sending the cone point to $\l{v_0}$.
As before, this lets us homotope $\phi$ to replace $\phi|_{\Star_{S^p}(e)}$ with $\phi|_{\partial e}\ast\psi\colon (\partial e)\ast B\to \BA_n^m$.
This eliminates the edge $e$. The only new vertex is $\l{v_0}$, so Conditions~\ref{cond:13} and \ref{cond:3prime2} are preserved since $F(v_0)=0$. Moreover, every new simplex in $(\partial e)\ast B$ contains 
$\l{v_0}$ but at most one of the lines $\l{v_1}$ and $\l{v_2}$.  Accordingly the 
new simplices are all standard, so Condition~\ref{cond:23} is preserved by this modification. Repeating this process lets us eliminate all edges 
violating Condition~\ref{cond:33}, as desired.

\begin{step}
\label{step:4}
Given $\phi\colon S^p\to \BA_n^m$ satisfying Conditions~\ref{cond:13}, \ref{cond:23} and \ref{cond:33}, we can homotope $\phi$ such 
that $R(\phi) < R$.
\end{step}

We remark that Condition \ref{cond:23} is not used in this step, though it is essential during Steps \ref{step:2} and \ref{step:3}.

If $R(\phi)=R$, then we can choose a vertex $x$ of $S^p$ such that $\rank(\phi(x)) = R$.  Write $\phi(x) = \l{v}$ with
$F(v) = R$.
Conditions~\ref{cond:13} and \ref{cond:33}
imply that $\phi$ takes $\Link_{S^p}(x)$ to $\Link_{\BA_n^m}(\l{v})^{<R}$.
We would like to apply Lemma~\ref{lemma:connectlinksba}\ref{part:b} and Proposition~\ref{prop:baretract2} to conclude that this subcomplex is highly connected. However, these results apply not to $\Link_{\BA_n^m}(\l{v})$, but to its proper subcomplex $\PLink_{\BA_n^m}(\l{v})$ defined in Definition~\ref{def:PLink}. Nevertheless, the vertices of the former that are excluded from the latter subcomplex are the $2m$ vertices of the form $\l{(v+e_i)}$ and $\l{(v-e_i)}$ for $i=1,\ldots,m$. Since $F(v+e_i)=F(v-e_i)=F(v)=R$, these complexes \emph{do} coincide when we restrict to vertices with $r(\l{w})<R$; in other words,  $\PLink_{\BA_n^m}(\l{v})^{<R}= \Link_{\BA_n^m}(\l{v})^{<R}$.

Lemma~\ref{lemma:connectlinksba}\ref{part:b} states
that $\PLink_{\BA_n^m}(\l{v})$ is isomorphic to $\BA_{n-1}^{m+1}$. Since $n>1$, our induction hypothesis states that this is CM of dimension $(n-1)$, and
in particular is $(n-2)$-connected.   Proposition~\ref{prop:baretract2} states that $\PLink_{\BA_n^m}(\l{v})$ admits a topological retraction onto $\PLink_{\BA_n^m}(\l{v})^{<R}$, so $\PLink_{\BA_n^m}(\l{v})^{<R}$ is $(n-2)$-connected as well.

The complex $\Link_{S^p}(x)$ is a combinatorial $(p-1)$-sphere.  
Since $p \leq n-1$, we have $p-1 \leq n-2$, so the restriction $\phi|_{\Link_{S^p}(\{x\})}$ is null-homotopic within $\PLink_{\BA_n^m}(\{\phi(x))\}^{<R}$. 
Just as in previous steps, this allows
us to homotope $\phi$ so as to eliminate $x$ without introducing any new vertices mapping
to vertices with $r(\l{w})=R$. This guarantees that Conditions~\ref{cond:13} 
and \ref{cond:33} are preserved by this modification.
Repeating this process lets us eliminate all vertices mapping to vertices with $r(\l{v})=R$, at which point $R(\phi)<R$, as desired.  This completes Step 4.
\medskip

Repeating the modifications of Steps~\ref{step:1}--\ref{step:4}, we can homotope $\phi$ so as to reduce $R(\phi)$ to $0$, at which point $\phi$ can be contracted to the constant map at the vertex $\l{e_{m+n}}$ as discussed before Step~\ref{step:1}. We conclude that an arbitrary map $\phi\colon S^p\to \BA_n^m$ for $0\leq p\leq n-1$ is null-homotopic, demonstrating that $\BA_n^m$ is $(n-1)$-connected. This completes the proof of Theorem~\ref{theorem:basesacon}.

\begin{remark}
\label{remark:PLMorsedifficulty}
The hardest part of Theorem~\ref{theorem:basesacon} was showing that $\BA_n^m$ is $(n-1)$-connected. We proved this connectivity by defining an $\N$-valued function $r$ on the vertices of $\BA_n^m$, for which the subcomplex where $r(x)=0$ is contractible inside $\BA_n^m$, and showed that any sphere in $\BA_n^m$ of the appropriate dimension could be homotoped to lie in this subcomplex. From this outline the argument seems similar to ``PL Morse theory'' arguments, a common technique when proving such connectivity results. However, the structure of our proof is rather nonstandard and departs greatly from the PL Morse theory framework. Still, given the obvious similarities it is natural to wonder if our proof can be phrased in this language. 

Briefly, PL Morse theory tells us that if we can find a function $F\colon X^{(0)}\to \N$ on the vertices of a simplicial complex $X$ such that 
\begin{compactitem}
\item there are no ``horizontal edges'', i.e.\ edges $\{x,y\}$ with $F(x)=F(y)$, and
\item the ``descending link'' of each vertex $x$ with $F(x)>0$, i.e.\ the full subcomplex of the link spanned by vertices $y$
with ${F(y)<F(x)}$, is $(m-1)$-connected,
\end{compactitem} then the inclusion into $X$ of the subcomplex where $F(x)=0$ is $m$-connected.

However, our function $r$ is definitely not a PL Morse function; its descending links are not highly connected and it has many horizontal edges. In fact, we do not believe that any such PL Morse function can be defined on the vertices of $\BA_n^m$.

It turns out that it is possible to capture the argument in \S\ref{section:basesacon2proof} via a PL Morse function $F$, but only after passing to the barycentric subdivision of $\BA_n^m$. Unfortunately, this function is  rather unwieldy (see below). Moreover, to verify that the descending links have the appropriate connectivity requires recapitulating every step in \S\ref{section:proofbasescon}, \S\ref{section:introducebanm}, \S\ref{section:basesacon2links}, and \S\ref{section:basesacon2retract}, so ultimately this perspective would provide no benefit.

For the interested reader: after defining the function $r$ on vertices of $\BA_n^m$ as in our proof, for a simplex $\sigma\in \BA_n^m$ one should define $F(\sigma)\in \N\times \N\times \Z\times -\N$  by 
%
%
\begin{align*}
F(\sigma) = \Big(\ \ &
\max_{x\text{ vertex of }\sigma}r(x),\\
&\text{\# of vertices of $\sigma$ realizing the maximum }\max_{x\in \sigma}r(x),\\
&\begin{cases}
-2&\text{ if $\sigma$ is additive and $\max_{x\in \sigma}r(x)$ is realized by a unique vertex}\\
-1&\text{ if $\sigma$ is internally additive and two of the three vertices}\\&\text{\qquad in its additive core realize the maximum $\max_{x\in \sigma}r(x)$}\\
0&\text{ if $\sigma$ is standard}\\
1&\text{ otherwise,}
\end{cases}\\
& -\dim(\sigma)\ \ 
\Big).
\end{align*}
If $\N\times \N\times \Z\times -\N$ is given the lexicographic order, then $F\colon \Poset(\BA_n^m)\to \N\times \N\times \Z\times -\N$ is a PL Morse function with well-ordered image whose descending links are $(n-2)$-connected.
\end{remark}

\begin{footnotesize}
\noindent
\begin{tabular*}{\linewidth}[t]{@{}p{\widthof{Department of Mathematics}+1.5in}@{}p{\linewidth - \widthof{Department of Mathematics}-1.5in}@{}}
{\raggedright
Thomas Church\\
Department of Mathematics\\
Stanford University\\
450 Serra Mall\\
Stanford, CA 94305\\
\href{mailto:tfchurch@stanford.edu}{\nolinkurl{tfchurch@stanford.edu}}}
&
{\raggedright
Andrew Putman\\
Department of Mathematics\\
University of Notre Dame\\
255 Hurley\\
Notre Dame, IN 46556\\
\href{mailto:andyp@nd.edu}{\nolinkurl{andyp@nd.edu}}}
\end{tabular*}\hfill
\end{footnotesize}

\end{document}